\def\reals{\mathbb{R}} 
\newcommand{\inner}[2]{\langle{#1},{#2}\rangle}
\newcommand{\grad}{\operatorname{grad}} 
\newcommand{\B}{\mathcal{B}}
\renewcommand{\P}{\mathcal{P}}
\newcommand{\M}{\mathcal{M}}
\newcommand{\N}{\mathcal{N}}
\newcommand{\E}{\mathbb{E}}
\providecommand{\argmin}{\mathop\mathrm{arg min}}
\providecommand{\tr}{\mathop\mathrm{Tr}}
\newcommand{\retr}{\operatorname{Retr}}
\newcommand{\T}{\mathcal{T}}
\renewcommand{\d}{\operatorname{d}}
\renewenvironment{proof}{\noindent\textbf{Proof.}\hspace*{.3em}}{\qed\\}
\newenvironment{proof-sketch}{\noindent\textbf{Proof Sketch}
  \hspace*{0.em}}{\qed\bigskip\\}
\newenvironment{proof-idea}{\noindent\textbf{Proof Idea}
  \hspace*{0.em}}{\qed\bigskip\\}
\newenvironment{proof-of-lemma}[1][{}]{\noindent\textbf{Proof of Lemma {#1}.}
  \hspace*{0.em}}{\qed\\}
\newenvironment{proof-of-corollary}[1][{}]{\noindent\textbf{Proof of Corollary {#1}.}
  \hspace*{0.em}}{\qed\\}
\newenvironment{proof-of-theorem}[1][{}]{\noindent\textbf{Proof of Theorem {#1}.}
  \hspace*{0.em}}{\qed\\}
\newenvironment{proof-attempt}{\noindent\textbf{Proof Attempt}
  \hspace*{0.em}}{\qed\bigskip\\}
\newtheorem{theorem}{Theorem}[section]
\newtheorem{lemma}{Lemma}[section]
\newtheorem{corollary}{Corollary}[section]
\newtheorem{proposition}{Proposition}[section]
\newtheorem{assumption}{Assumption}[section]
\newtheorem{remark}{Remark}[section]
\newtheorem{definition}{Definition}[section]
\renewcommand*{\backref}[1]{\ifx#1\relax \else Page #1 \fi}
\renewcommand*{\backrefalt}[4]{%
  \ifcase #1 \footnotesize{(Not cited.)}%
  \or        \footnotesize{(Cited on page~#2.)}%
  \else      \footnotesize{(Cited on pages~#2.)}%
  \fi
}
\newcommand*{\colorboxed}{}
\def\colorboxed#1#{%
  \colorboxedAux{#1}%
}
\newcommand*{\colorboxedAux}[3]{%
  \begingroup
    \colorlet{cb@saved}{.}%
    \color#1{#2}%
    \boxed{%
      \color{cb@saved}%
      #3%
    }%
  \endgroup
}
\numberwithin{equation}{section}
\newcommand{\todol}[2][]{{%
 \let\marginpar\marginnote
 \reversemarginpar
 \renewcommand{\baselinestretch}{0.8}%
 \todo[color=yellow]{#2}}}
\title{
On Relatively Smooth Optimization over Riemannian Manifolds
}
\author{
{Chang He} \thanks{School of Information Management and Engineering, Shanghai University of Finance and Economics; Department of Industrial and System Engineering, University of Minnesota. \texttt{ischanghe@gmail.com}}
\and
{Jiaxiang Li} \thanks{Department of Electrical and Computer Engineering, University of Minnesota.  \texttt{li003755@umn.edu}}
\and
{Bo Jiang} \thanks{School of Information Management and Engineering, Shanghai University of Finance and Economics. \texttt{isyebojiang@gmail.com}}
\and
{Shiqian Ma} \thanks{Department of Computational Applied Mathematics and Operations Research, Rice University. \texttt{shiqian.ma@rice.edu}}
\and
{Shuzhong Zhang} \thanks{Department of Industrial and System Engineering, University of Minnesota. \texttt{zhangs@umn.edu}}
}
\date{}
\begin{document}
\maketitle

\begin{abstract}
We study optimization over Riemannian embedded submanifolds, where the objective function is relatively smooth in the ambient Euclidean space. Such problems have broad applications but are still largely unexplored. We introduce two Riemannian first-order methods, namely the retraction-based and projection-based Riemannian Bregman gradient methods, by incorporating the Bregman distance into the update steps. The retraction-based method can handle nonsmooth optimization; at each iteration, the update direction is generated by solving a convex optimization subproblem constrained to the tangent space. We show that when the reference function is of the quartic form $h(x) = \frac{1}{4}\|x\|^4 + \frac{1}{2}\|x\|^2$, the constraint subproblem admits a closed-form solution. The projection-based approach can be applied to smooth Riemannian optimization, which solves an unconstrained subproblem in the ambient Euclidean space. Both methods are shown to achieve an iteration complexity of $\mathcal{O}(1/\epsilon^2)$ for finding an $\epsilon$-approximate Riemannian stationary point. When the manifold is compact, we further develop stochastic variants and establish a sample complexity of $\mathcal{O}(1/\epsilon^4)$. Numerical experiments on the nonlinear eigenvalue problem and low-rank quadratic sensing problem demonstrate the advantages of the proposed methods.
\end{abstract} 

\section{Introduction}
In this work, we study the following constrained composite optimization problem:
\begin{equation}\label{eq.main}
\begin{aligned}
    \min_{x \in \reals^n} \quad &F(x) = f(x) + g(x) \\
    \operatorname{s.t.} \quad  &x \in \M \subseteq \reals^n,
\end{aligned}
\end{equation}
where $\M$ is a Riemannian embedded submanifold of $\reals^n$, $f: \reals^n \to \reals$ is continuously differentiable and may be nonconvex, and $g: \reals^n \to \reals$ is a convex, continuous (possibly nonsmooth) function. Here, convexity and
smoothness are interpreted as the function is being considered in the ambient Euclidean space. Moreover, the objective function $F(\cdot)$ is bounded below on $\M$, i.e., $F^* = \inf_{x \in \M} F(x) > -\infty$. Such a constrained optimization problem has attracted considerable attention due to its numerous applications, including principal component analysis, low-rank matrix completion, and dictionary learning \citep{absil2009optimization,vandereycken2013low,boumal2015low,sun2015complete,cherian2016riemannian,liu2019quadratic,boumal2023introduction,mishra2019riemannian,li2024riemannian}. By exploiting the geometric structure of the manifold, such as through a suitable retraction, manifold optimization problems can be tackled as unconstrained problems, often resulting in stronger convergence guarantees.


To measure the efficiency of Riemannian optimization methods, one typically considers the iteration complexity, which refers to the number of iterations needed to obtain an approximate solution. When optimizing over a Riemannian embedded submanifold $\M$, most complexity analyses rely on a Riemannian version of the descent property (e.g., Property A3 in \citet{boumal2019global}), closely analogous to the Euclidean case. Consequently, the same iteration complexity bounds as in their unconstrained Euclidean counterparts typically hold. This descent property is usually established by combining Euclidean gradient Lipschitz continuity with retraction inequalities. However, standard Lipschitz gradient continuity can be sometimes restrictive. Even the simple polynomial \(f(x)=x^{4}\) lacks a Lipschitz continuous gradient, and the widely used log-barrier function in interior-point methods fails to satisfy this condition either \citep{nesterov1994interior,hinder2024worst,jiang2024barrier}. Although it is often possible to argue that the iterates remain within a compact set, the resulting Lipschitz constant can become too large that the corresponding complexity bound offers limited practical insight. This motivates us to develop a more general framework for Riemannian optimization, extending beyond the standard Lipschitz gradient assumption.  

In this paper, we adopt the notion of \emph{relative smoothness}\footnote{Some authors, e.g., \cite{takahashi2024approximate}, refer to this property as the ``adaptable smoothness property".}, which is defined relative to a reference function \citep{bauschke2017descent,lu2018relatively}. The formal definition is as follows:
\begin{definition}[Relative smoothness]
   Given a differentiable convex function $h$, referred to as the reference function, function $f$ is said to be $L$-smooth relative to $h$ if, for all $x, y \in \reals^n$,
   \begin{align*}
      f(y) \ \le \ f(x) + \inner{\nabla f(x)}{y-x} + L \cdot D_h(y,x),  
   \end{align*}
where $D_h(y,x)$ is the Bregman distance induced by $h$, defined as
\begin{align*}
   D_h(y,x) \ \triangleq \ h(y) - h(x) - \inner{\nabla h(x)}{y - x}. 
\end{align*}
\end{definition}

Clearly, choosing $h(x) = \frac{1}{2}\|x\|^2$ recovers the standard notion of gradient Lipschitz continuity. In certain applications, carefully selecting the reference function $h$ can yield a more accurate local approximation \citep{bolte2018first}. Below, we briefly highlight several representative applications for which the objective function is relatively smooth and the feasible region is a Riemannian manifold.

\subsection{Motivating examples}
\paragraph{Polynomial optimization over the Stiefel manifold.} Constrained polynomial optimization is a widely studied class of problems, capturing applications in signal processing, machine learning, and control \citep{li2012approximation}. A prototypical instance over the Stiefel manifold is the nonlinear eigenvalue problem, which arises in electronic-structure calculations, such as Kohn–Sham and Hartree–Fock energy‐minimization models \citep{cai2018eigenvector,yang2009convergence,liu2014convergence}. In particular, discretising a one-dimensional Kohn–Sham equation leads to
\begin{equation}\label{eq.ne}
\begin{aligned}
  \min_{X\in\mathbb{R}^{m\times p}} \quad 
      &f(X) = \frac{1}{2}\tr(X^\top L X) + \frac{\beta}{4} \rho_X^\top L^{\dagger} \rho_X \\
    \operatorname{s.t.}\quad & X^\top X = I_p,
\end{aligned}
\end{equation}
where \(\rho_X \triangleq \operatorname{diag}(XX^{\top})\) collects the orbital densities,
\(L\in\mathbb{R}^{m\times m}\) is the tridiagonal matrix with \(2\) on the main diagonal and \(-1\) on the first sub- and super-diagonals,
\(L^{\dagger}\) denotes its pseudo-inverse, and \(\beta>0\) is a parameter. Because the Hessian of \(f\) grows as a polynomial in \(\|X\|\), we can invoke the result in \citet{lu2018relatively}: if  
\(\|\nabla^2 f(X)\|\le p_r(\|X\|)\) for a univariate polynomial \(p_r\) of degree \(r\),  
then \(f\) is relatively smooth with respect to  $h(X) = \frac{1}{r+2}\|X\|^{r+2} + \frac{1}{2}\|X\|^2$. Specifically, when $p = 1$, the Stiefel manifold reduces to the sphere, yielding the classical polynomial optimization with sphere constraints.

\paragraph{Low-rank quadratic sensing problem.} The quadratic sensing problem is a fundamental optimization problem arising in statistical models (Section 4 in \cite{chi2018nonconvex}). It appears in various applications, such as covariance sketching for streaming data; see, e.g.,~\cite{chen2015exact,cai2015rop}. In this problem, we have access to $N$ measurements of a rank-$r$ matrix $\Sigma = X_* X_*^\top$ with $X_* \in \mathbb{R}^{m\times r}$:
\begin{align*}
   c_j = \|X_*^\top y_j\|^2 = y_j^\top \Sigma y_j, j = 1, \ldots, N,
\end{align*}
where $y_j \in \reals^m$ are known design vectors. The goal is to reconstruct $X_*$ from these quadratic measures. Mathematically, this task can be formulated as the following optimization problem:
\begin{equation}\label{eq.low-rank recover}
    \begin{aligned}
       \min_{X\in\mathbb{R}^{m\times r}} \quad 
      &f(X)=\frac{1}{2} \sum_{j=1}^N\left(\|X^\top y_j\|^2-c_j\right)^2 \\
    \operatorname{s.t.}\quad & \operatorname{rank}(X) = r. 
    \end{aligned}
\end{equation}
Note that $f$ is a fourth-degree polynomial in the entries of $X$. Consequently, the objective function is relatively smooth with respect to the reference function $h(X)=\tfrac14\|X\|^{4}+\tfrac12\|X\|^{2}$.

\paragraph{Low‑rank Poisson matrix completion.} Low-rank Poisson matrix completion \citep{cao2015poisson,mcrae2021low} seeks to recover a rank-\(r\) matrix \(X \in \mathbb{R}^{m\times p}\) from partial nonnegative integer observations \(\{Y_{ij}\}_{(i,j)\in\Omega}\). The observations are Poisson counts of the observed matrix entries:
\[
  Y_{ij} \sim\mathrm{Poisson}\left(X_{ij}\right),\quad (i,j)\in\Omega,
\]  
where \(\Omega\subseteq\{1,\dots,m\}\times\{1,\dots,p\}\) is the set of observed entries. We recover the matrix \(X\) via the maximum likelihood formulation; specifically, we minimize the negative log-likelihood:
\begin{align*}
    \min_{X\in\mathbb{R}^{m\times p}} \quad 
      &f(X) = \sum_{(i,j)\in\Omega}\left(X_{ij} - Y_{ij}\log\left(X_{ij}\right)\right) \\
    \operatorname{s.t.}\quad &\operatorname{rank}(X)=r, \ X_{ij}>0, \ \forall(i,j)\in\Omega,
\end{align*}
which is naturally posed on the embedded submanifold defined as  
\(\mathcal{M}=\{X\in\mathbb{R}^{m\times p}:\operatorname{rank}(X)=r,X_{ij}>0, \ \forall(i,j)\in\Omega\}\). Notice that the Euclidean Hessian \(\nabla^2 f(X)\) has entries \(Y_{ij}/X^2_{ij}\) for \((i,j)\in\Omega\), which become unbounded as \(X_{ij}\to0^+\), and thus \(f\) does not admit a global Lipschitz constant for its gradient. To identify the relative smoothness, we instead choose the logarithmic barrier function $h(X) = -\sum_{(i,j)\in\Omega} \log\left(X_{ij}\right)$ as the reference function. Then, \(f\) is relatively smooth with respect to \(h\).

\subsection{Related works}

\paragraph{Optimization over Riemannian manifolds.} 
For smooth Riemannian optimization problems, i.e., \(g \equiv 0\) in \eqref{eq.main}, first-order methods are popular choices, such as Riemannian gradient descent and its variants, e.g., Riemannian conjugate gradient methods \citep{sato2022riemannian}. When Hessian information is accessible, second-order methods such as Riemannian trust-region method \citep{absil2007trust} and cubic regularized Riemannian Newton method \citep{agarwal2021adaptive, zhang2018cubic} can be applied, offering better convergence performances. When an efficient projection onto the manifold is available, \citet{ding2024convergence} proposed a projection-based framework specifically tailored for compact matrix manifolds. Additionally, stochastic variants of Riemannian gradient descent have been developed to improve scalability in large-scale settings \citep{bonnabel2013stochastic, han2021improved, hosseini2020recent, zhang2016riemannian}. 

For composite problems involving a nonsmooth convex term \(g\), \citet{chen2020proximal} proposed a Riemannian proximal gradient method over the Stiefel manifold, naturally extending the Euclidean proximal gradient framework to the manifold setting. \citet{huang2022riemannian, huang2023inexact} further generalized this approach to arbitrary Riemannian manifolds and introduced accelerated and inexact variants. Newton-type methods for nonsmooth composite Riemannian optimization problems have also been studied extensively; see, e.g., \citet{grohs2016nonsmooth}, \citet{hu2024projected}, \citet{si2024riemannian}, \citet{wang2023proximal}, and \citet{wang2024adaptive}. Recently, there have also been works extending Riemannian optimization methods to more complex settings, including minimax, bilevel, and zeroth-order optimization \citep{han2023nonconvex, han2023riemannian, han2024framework, he2024riemannian, li2023stochastic, li2025riemannian, zhang2023sion}.

\paragraph{Relatively smooth optimization.} The concept of relative smoothness was initially proposed by \citet{bauschke2017descent,lu2018relatively} in the context of convex optimization, relaxing the standard assumption of global gradient Lipschitz continuity. This notion motivates Bregman-type gradient methods, in which the traditional Euclidean distance is replaced by the Bregman divergence, thereby enabling them to accommodate a broader class of optimization problems. For instance, objective functions arising in the Poisson inverse problem and the D-optimal design problem have been shown to be relatively smooth with respect to the reference function \(h(x) = -\sum_{i=1}^n \log(x_i)\). More applications can be found in \citet{mukkamala2022bregman}.

Variants of Bregman-type gradient methods have also been well studied. \cite{hanzely2021accelerated, hanzely2021fastest} proposed accelerated Bregman proximal gradient methods for relatively smooth convex optimization, and \cite{takahashi2024approximate} developed an inexact version, which employs a new formulation that approximates the Bregman distance, making the subproblem easier to solve. Stochastic Bregman gradient methods have been studied in \cite{fatkhullin2024taming, ding2025nonconvex, dragomir2021fast}, demonstrating that relative smoothness can be applied to deep learning and differentially private optimization. For compact convex constraint sets, Frank–Wolfe methods under relative smoothness have been recently introduced and analyzed theoretically in \cite{vyguzov2024adaptive,takahashi2025fast}. 

\subsection{Main contributions}
In this paper, we extend the methodology of relatively smooth minimization to the setting of Riemannian optimization and introduce two Riemannian Bregman gradient methods. In Section \ref{section.retraction}, we propose the retraction-based Riemannian Bregman gradient method (Algorithm~\ref{alg.relative GD}) for nonsmooth optimization, which generalizes the update formulation of ManPG \citep{chen2020proximal} by employing the Bregman distance. Consequently, at each iteration, we solve a convex optimization subproblem involving the reference function and subject to a tangent space constraint. We show that for the quartic reference function $h(x) = \frac{1}{4}\|x\|^4 + \frac{1}{2}\|x\|^2$, the subproblem admits a closed-form solution (Proposition \ref{propo.quartic closed form}). Moreover, when the manifold is a sphere and the reference function is chosen to be either the log-barrier function or the entropy function, the subproblem simplifies to solving a one-dimensional nonlinear equation  (Proposition \ref{propo.sphere closed form}). By viewing the subproblem as a parametric optimization problem, we prove convergence to a stationary point on smooth, complete Riemannian embedded submanifolds and establish an iteration complexity of $\mathcal{O}(1 / \epsilon^2)$ for finding an $\epsilon$-approximate Riemannian stationary point of problem \eqref{eq.main} (Theorem~\ref{thm.convergence of retraction based}). 

In Section \ref{section.projection}, we develop the projection-based Riemannian Bregman gradient method (Algorithm~\ref{alg.projection relative GD}) for smooth Riemannian optimization ($g \equiv 0$ in \eqref{eq.main}). In this case, the subproblem becomes an easier unconstrained convex optimization problem. After obtaining the update direction, we project directly onto the manifold with an appropriate stepsize. By using projection-related inequalities (Lemmas \ref{lemma.projection inequality} and \ref{lemma.bound projection normal space}), we similarly establish convergence and iteration complexity (Theorem \ref{thm.convergence of projection based}). Interestingly, we find that the projection-based method generates the same update direction as the retraction-based method when using the quartic reference function over fixed-rank manifolds (Proposition \ref{prop.projection fixed rank}). In Section \ref{section.stochastic}, for compact submanifolds, we further develop corresponding stochastic variants and establish their sample complexity guarantees; see Theorems~\ref{thm.sample complexity relative} and~\ref{thm.sample complexity projection}, respectively. Numerical results on the nonlinear eigenvalue problem \eqref{eq.ne} and low-rank quadratic sensing problem \eqref{eq.low-rank recover} in Section \ref{section.numerics} demonstrate the efficiency of our Riemannian Bregman gradient methods.

\section{Preliminaries}
Throughout this paper, we use lowercase letters (e.g., $x, y, z$) to denote vectors and uppercase letters (e.g., $X, Y, Z$) to denote matrices. Unless otherwise specified, we use lowercase symbols in the main text. A function is said to be $C^k$ if it is $k$-times continuously differentiable; in particular, a $C^\infty$ function is said to be smooth. In this section, we provide a brief introduction to optimization over Riemannian manifolds. For more details, we refer the interested reader to textbooks \citep{absil2009optimization, boumal2023introduction}. We first introduce the notion of a differentiable submanifold via the implicit‐function theorem:
\begin{definition}[Differentiable submanifolds]\label{def.submanifold}
   A subset $\mathcal M\subseteq\mathbb R^n$ is called a
  $d$‑dimensional embedded $C^k$ submanifold, $k\ge1$, if for every
  $x\in\mathcal M$ there exist an open neighborhood
  $\mathcal U_x\subseteq\mathbb R^n$ and a $C^k$ map $\phi_x: \mathcal{U}_x \to \reals^{n-d}$ such that $\mathcal{U}_x \cap \mathcal{M} = \{y\in\mathcal{U}_x : \phi_x(y)=0\}$, and $\operatorname{rank}(\operatorname{J}\phi_x(y))=n-d$ for all $y\in \mathcal{U}_x\cap\mathcal{M}$, where $\operatorname{J}\phi_x$ denotes the Jacobian matrix of $\phi_x$.
\end{definition}

For a submanifold $\mathcal{M} \subseteq \reals^n$, the tangent space at point $x \in \mathcal{M}$, denoted by $\mathcal{T}_x \mathcal{M}$, can be characterized as a linear subspace of $\reals^n$ given by:
\begin{definition}[Tangent space]
    Given a submanifold $\mathcal{M} \subseteq \reals^n$, the tangent space of $\mathcal{M}$ at $x \in \M$ is defined as
    $$
    \mathcal{T}_x \mathcal{M}=\left\{\gamma^{\prime}(0): \gamma \text { is a smooth curve with } \gamma(0)=x, \ \gamma([-\delta, \delta]) \subseteq \mathcal{M}, \ \text {for some }\delta>0\right\}
    $$
    Consequently, the tangent bundle is defined as $\mathcal{T} \mathcal{M}=\left\{(x, \xi): x \in \mathcal{M}, \xi \in \mathcal{T}_x \mathcal{M}\right\}$. The normal space of $\mathcal{M}$ at $x$, denoted by $\N_x\M$, is the orthogonal complement to the tangent space $\mathcal{T}_x \mathcal{M}$. 
\end{definition}

For example, one commonly encountered submanifold is the Stiefel manifold, defined as $\operatorname{St}(m,p) = \{X \in \reals^{m \times p}: X^\top X - I_p = 0\}$, and the tangent space to $\operatorname{St}(m,p)$ at $X$ is $\T_X\operatorname{St}(m,p) = \{V \in \reals^{m \times p}: X^\top V + V^\top X = 0\}$. By the implicit function theorem, a useful result is that the tangent space can be expressed in terms of the Jacobian of some equations.
\begin{corollary}\label{coro.tangent space charactirization}
    Let $\mathcal{M}$ be a $d$‐dimensional embedded $C^k$ submanifold of $\reals^n$. Given $x\in\mathcal{M}$, for any $y\in \mathcal{U}_x\cap\mathcal{M}$, it holds that $\mathcal{T}_y \mathcal{M}=\operatorname{ker}(\operatorname{J}\phi_x(y)) = \{v \in \reals^n: \operatorname{J}\phi_x(y) v = 0\}$, where $\phi_x(\cdot)$ is defined in Definition \ref{def.submanifold}.
\end{corollary}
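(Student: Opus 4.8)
The plan is to establish the two inclusions $\mathcal{T}_y\mathcal{M} \subseteq \ker(\operatorname{J}\phi_x(y))$ and $\ker(\operatorname{J}\phi_x(y)) \subseteq \mathcal{T}_y\mathcal{M}$ separately; the first is immediate from the definitions, while the second carries all the content and rests on the implicit function theorem.

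For the forward inclusion, I would take $v \in \mathcal{T}_y\mathcal{M}$ and choose, as in the definition of the tangent space, a smooth curve $\gamma\colon[-\delta,\delta]\to\mathcal{M}$ with $\gamma(0)=y$ and $\gamma'(0)=v$; after shrinking $\delta$ so that $\gamma([-\delta,\delta])\subseteq\mathcal{U}_x\cap\mathcal{M}$, the defining property of $\phi_x$ gives $\phi_x(\gamma(t))\equiv 0$, and differentiating this identity at $t=0$ by the chain rule yields $\operatorname{J}\phi_x(y)\,v=0$.

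For the reverse inclusion, I would apply the implicit function theorem at $y$: since $\operatorname{rank}(\operatorname{J}\phi_x(y))=n-d$, after permuting coordinates the last $(n-d)\times(n-d)$ block of $\operatorname{J}\phi_x(y)$ is invertible, and solving $\phi_x=0$ for those coordinates produces a $C^k$ local parametrization $\psi\colon V\to\mathcal{U}_x\cap\mathcal{M}$ with $V\subseteq\reals^d$ open, $\psi(0)=y$, and $\operatorname{J}\psi(0)$ of rank $d$. Differentiating $\phi_x(\psi(u))\equiv 0$ at $u=0$ gives $\operatorname{J}\phi_x(y)\operatorname{J}\psi(0)=0$, so $\operatorname{Im}(\operatorname{J}\psi(0))\subseteq\ker(\operatorname{J}\phi_x(y))$; since the former has dimension $d$ and the latter has dimension $n-(n-d)=d$ by rank--nullity, the two coincide. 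At the same time, for each $w\in\reals^d$ the curve $t\mapsto\psi(tw)$ lies in $\mathcal{M}$, passes through $y$ at $t=0$, and has velocity $\operatorname{J}\psi(0)\,w$, so $\operatorname{Im}(\operatorname{J}\psi(0))\subseteq\mathcal{T}_y\mathcal{M}$. Combining, $\ker(\operatorname{J}\phi_x(y))=\operatorname{Im}(\operatorname{J}\psi(0))\subseteq\mathcal{T}_y\mathcal{M}$, which with the forward inclusion yields equality.

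I expect the only real obstacle to be the construction of $\psi$: one must check that the implicit function theorem applies with the stated $C^k$ regularity and that its graph solution is genuinely a local chart of $\mathcal{M}$ onto $\mathcal{U}_x\cap\mathcal{M}$ with injective differential at the base point — classical, but this is precisely where the full-rank hypothesis on $\operatorname{J}\phi_x$ is used. A shortcut that sidesteps building $\psi$ is to invoke the standard fact that a $d$-dimensional embedded submanifold has $d$-dimensional tangent spaces and then close by a dimension count against the forward inclusion; I would present the parametrization route as the self-contained argument and note this alternative in passing.
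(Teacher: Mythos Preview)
Your proposal is correct and follows exactly the route the paper indicates: the paper does not give a detailed proof but simply states that the corollary follows from the implicit function theorem, which is precisely the tool you invoke to construct the local parametrization $\psi$ and close by a dimension count. Your argument is the standard self-contained elaboration of that one-line reference, so there is nothing to add or compare.
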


\begin{definition}[Riemannian (embedded) manifold]
    Let $\mathcal{M}$ be a differentiable submanifold of $\reals^n$. We say $\mathcal{M}$ is a Riemannian submanifold of $\reals^n$, if for any $x \in \mathcal{M}$ the tangent space $\mathcal{T}_x \mathcal{M}$ is endowed with a smooth inner product mapping $\langle\cdot, \cdot\rangle_x: \mathcal{T} \mathcal{M}\times \mathcal{T} \mathcal{M} \to \reals$; that is, for any $\eta, \xi \in \mathcal{T}_x \mathcal{M}$, $\langle\xi, \eta\rangle_x$ forms an inner product on $\mathcal{T}_x \mathcal{M}\times \mathcal{T}_x \mathcal{M}$. Denote the induced norm $\|\eta\|_x=\sqrt{\langle\eta, \eta\rangle_x}$ for any $\eta \in \mathcal{T}_x \mathcal{M}$.

    Further, we say $\mathcal{M}$ is a Riemannian embedded submanifold of $\reals^n$, if for any $x \in \mathcal{M}$, the tangent space $\mathcal{T}_x \mathcal{M}$ is endowed with the Euclidean inner product; that is, for any $\eta, \xi \in \mathcal{T}_x \mathcal{M}$, $\langle\xi, \eta\rangle_x \triangleq \langle\xi, \eta\rangle$, where the latter is the standard Euclidean inner product. Hence, the norm $\|\cdot\|_x$ is the same as the standard $\ell_2$-norm or the Frobenius norm in the matrix case.
\end{definition}

Without loss of generality, we omit the subscript $x$ in the inner product $\langle\cdot, \cdot\rangle$ and the norm $\|\cdot\|$ since our focus is on Riemannian embedded submanifolds. For any point \(x \in \mathbb{R}^n\) and a nonempty subset \(\mathcal{S} \subseteq \mathbb{R}^n\), we denote by \(\mathcal{P}_{\mathcal{S}}(x)\) the projection of \(x\) onto \(\mathcal{S}\) if it exists. We use \(\overline{\mathcal{S}}\) to denote the closure of the set \(\mathcal{S}\). The open Euclidean ball of radius \(r > 0\) centered at \(x \in \mathbb{R}^n\) is denoted by $\mathbb{B}(x, r) \triangleq \{y \in \reals^n: \|y-x\| < r\}$. In this setting, the Riemannian gradient is defined as the projection of the Euclidean gradient onto the tangent space of the manifold. 
\begin{definition}[Riemannian gradient]
   Let $f$ be a continuously differentiable function on $\reals^n$. The Riemannian gradient $\operatorname{grad} f(x)$ of $f$ with respect to a submanifold $\mathcal{M}$ is a tangent vector in $\mathcal{T}_x \mathcal{M}$ defined by
    $$
    \grad f(x)=\operatorname{\P}_{\mathcal{T}_x \mathcal{M}}(\nabla f(x)),
    $$
    where $\P_{\T_x\M}$ is the orthogonal projection onto the tangent space $\T_x\M$. We say $x$ is a Riemannian stationary point of the differentiable function $f$ if it satisfies $\grad f(x) = 0$.
\end{definition}

For a convex function $g$, its Euclidean subgradient at point $x$ is denoted by $\partial g(x)$. Similarly, the  Riemannian subgradient is defined  as $\hat{\partial} g(x) = \operatorname{\P}_{\mathcal{T}_x \mathcal{M}}(\partial g(x))$. From \cite{chen2020proximal}, the optimality condition for problem \eqref{eq.main} is given as follows:
\begin{definition}[Optimality condition]
    A point $x \in \M$ is called a Riemannian stationary point of problem \eqref{eq.main} if it satisfies $0 \in \grad f(x) + \operatorname{\P}_{\mathcal{T}_x \mathcal{M}}(\partial g(x))$.
\end{definition}

A key ingredient in Riemannian optimization is the notion of a retraction, which is a first-order approximation of the exponential mapping and is often more amenable for computation. Its formal definition is given below.
\begin{definition}[Retraction]
   A retraction on a manifold $\mathcal{M}$ is a smooth mapping Retr from the tangent bundle $\T \mathcal{M}$ to $\mathcal{M}$ with the following properties. Let $\retr(x, \cdot): \T_x \mathcal{M} \to \mathcal{M}$ denote the restriction of Retr to $\T_x \mathcal{M}$.
   \begin{enumerate}
       \item $\retr\left(x, 0_x\right)=x$, where $0_x$ is the zero vector in $\T_x \mathcal{M}$;
       \item The differential of $\retr(x, \cdot)$ at $0_x$, i.e., $\operatorname{D}\retr\left(x, 0_x\right)$, is the identity map. 
   \end{enumerate}
\end{definition}

When the manifold is complete, the domain of the retraction is the entire tangent bundle. By the smoothness of the retraction, for any $(x, v) \in \T\M$, there exist constants $M_1^{\operatorname{R}}(x,v), M_2^{\operatorname{R}}(x,v) \ge 0$ such that
\begin{equation}\label{eq.retraction inequality}
    \begin{aligned}
        \|\retr(x, v) - x\| \ &\le \ M_1^{\operatorname{R}}(x,v) \|v\|, \\
        \|\retr(x, v) - (x + v)\| \ &\le \ M_2^{\operatorname{R}}(x,v) \|v\|^2,
    \end{aligned}
\end{equation}
where $M_1^{\operatorname{R}}(x,v) = \max_{\xi \in \overline{\mathbb{B}}(x, \|v\|)}\|\operatorname{D}\retr(x, \xi)\|$, and $M_2^{\operatorname{R}}(x,v) = \max_{\xi \in \overline{\mathbb{B}}(x, \|v\|)}\|\operatorname{D}^2\retr(x, \xi)\|$. These inequalities follow directly from Lemma 4 in \cite{boumal2019global}. However, these two constants are no longer uniform since we do not restrict our analysis to compact manifolds. Specifically, $M_1^{\operatorname{R}}(x,v)$ and $M_2^{\operatorname{R}}(x,v)$ depend on both the current point $x$ and tangent vector $v$. We close this section by stating the following assumptions used throughout the paper. These conditions are standard in Riemannian optimization (see, e.g., \cite{chen2020proximal, zhang2016first}).
\begin{assumption}\label{assumption}
    In problem \eqref{eq.main}, the Riemannian embedded submanifold $\M$ is $C^\infty$ and complete. The smooth part $f$ is $L$-smooth relative to a reference function $h$, where $h$ is continuously differentiable and $\lambda$-strongly convex with $\lambda > 0$. The nonsmooth term $g$ is $L_g$-Lipschitz continuous. The sublevel set of function $F$ at some point $\widetilde{x}$ is compact, i.e., $\mathcal{L}(\widetilde{x}) \triangleq \{x \in \M: F(x) \le F(\widetilde{x})\}$ is compact.
\end{assumption}

\section{Retraction-Based Riemannian Bregman Gradient Method}\label{section.retraction}


Most optimization methods over Riemannian manifolds share a common update scheme: they first solve a subproblem in the tangent space of the current iterate, which returns a suitable descent direction; then use a retraction along this direction with an appropriate stepsize to obtain the next iterate. This idea is natural and allows Riemannian optimization methods to mimic their Euclidean counterparts. In this section, we develop a Riemannian Bregman gradient method by following this update paradigm.

Given a point $x \in \M$, we use the Bregman distance induced by a reference function $h$ to guide the update direction. Specifically, we solve the following subproblem in the tangent space $\T_x\M$:
\begin{equation}\label{eq.update grad}
    v^*(x) \ = \ \argmin_{v \in \T_{x}\M} \ \inner{\grad f(x)}{v} + \gamma D_h(x + v, x) + g(x + v),
\end{equation}
where $\gamma > 0$ can be viewed as the stepsize. If $h(x) = \frac{1}{2}\|x\|^2$, i.e., the Euclidean squared norm, and $\M$ is the Stiefel manifold, then the update rule in \eqref{eq.update grad} reduces to the ManPG proposed in \cite{chen2020proximal}. 
According to Theorem 4.1 in \cite{yang2014optimality}, the first-order optimality condition for the subproblem \eqref{eq.update grad} is characterized by
\begin{align*}
    0 \ \in \ \grad f(x) + \gamma \cdot \P_{\T_{x}\M}\left(\nabla h(x + v^*(x)) - \nabla h(x)\right) + \P_{\T_{x}\M} \left(\partial g(x + v^*(x))\right).
\end{align*}
Thus, if \( v^*(x) = 0 \), the condition reduces to \( 0 \in \grad f(x) + \P_{\T_{x}\M} \left(\partial g(x) \right)\), which is exactly the optimality condition of problem \eqref{eq.main}. Hence, the magnitude of direction $v^*(x)$ can be viewed as a stationary measure. Since the subproblem is restricted to the tangent space, we have $\inner{\grad f(x)}{v} = \inner{\nabla f(x)}{v}, \ v \in \T_{x}\M$, by the definition of the Riemannian gradient. It is therefore unnecessary to compute the Riemannian gradient \( \grad f(x) \); instead, the Euclidean gradient \( \nabla f(x) \) suffices. After obtaining the direction \( v^*(x) \), one can choose a suitable stepsize via backtracking linesearch with a shrinkage parameter. Combining the above components, we summarize our retraction-based method for solving \eqref{eq.main} in Algorithm~\ref{alg.relative GD}. During the iterations of this algorithm, we terminate once the norm of the update direction $\|v_t\|$ (Line 3 in Algorithm~\ref{alg.relative GD}) becomes small. Specifically, we define the $\epsilon$-approximate stationary point as follows.
\begin{definition}\label{eq.stationary point retraction}
   Given accuracy $\epsilon > 0$, we say $x_t$ is an $\epsilon$-approximate Riemannian stationary point of problem \eqref{eq.main} whenever $\|v_t\| \le \epsilon$, where $v_t$ is defined in \eqref{eq.update nabla}. 
\end{definition}

\begin{algorithm}[h]
	\caption{Retraction-Based Riemannian Bregman Gradient Method}
	\label{alg.relative GD}
	\begin{algorithmic}[1] 
        \item \textbf{Input:} initial point $x_0 \in \M$, $\gamma_t \ge L$, $\rho \in (0,1)$
        \item \textbf{For} $t = 0, 1, \ldots$ \textbf{do}
        \item \quad \quad Obtain $v_t$ by solving the subproblem
        \begin{equation}\label{eq.update nabla}
             v_t \ = \ \argmin_{v \in \T_{x_t}\M} \ \inner{\nabla f(x_t)}{v} + \gamma_t D_h(x_t + v, x_t) + g(x_t + v)
        \end{equation}
         \item \quad \quad Set the initial stepsize $\alpha_t = 1$
         \item  \quad \quad \textbf{While} $F\left(\retr(x_t,\alpha_t v_t)\right) - F(x_t) \ > \ -\frac{\gamma_t\lambda\alpha_t}{4} \|v_t\|^2$ \textbf{do}
          \item  \quad \quad \quad \quad $\alpha_t := \rho \alpha_t$
          \item  \quad \quad \textbf{ end While}
           \item \quad \quad Update $x_{t+1} = \retr(x_t,\alpha_t v_t)$
	\end{algorithmic}
\end{algorithm}

Notice that the next iterate \( x_{t+1} \) lies on the manifold \( \M \), whereas the subproblem~\eqref{eq.update nabla} is approximately solved in the tangent space \( \T_{x_t}\M \), this induces an approximation error. Let \( x_t^+ \triangleq x_t + \alpha_t v_t \) denote the intermediate point in the tangent space. The following lemma provides an upper bound on the discrepancy between \( D_h(x_{t+1}, x_t) \) and \( D_h(x_t^+, x_t) \). As the analysis is localized around the iterate \( x_t \), we introduce the radius \( r(x,v) \triangleq M_1^{\operatorname{R}}(x,v)\|v\| \), where \( M_1^{\operatorname{R}}(x,v) \) is the constant from inequality~\eqref{eq.retraction inequality}.

\begin{lemma}\label{lemma.bound bregman deviation}
     Suppose Assumption \ref{assumption} holds. Let $v_t$ be the solution to \eqref{eq.update nabla}. For any $\alpha_t \in (0, 1)$, it holds that $D_h(x_t^+, x_t) \le \alpha_t D_h(x_t + v_t, x_t)$ and 
     $$
     D_h(x_{t+1}, x_t) - D_h(x_t^+, x_t) \le 2G_h(x_t, r(x_t, v_t)) M_2^{\operatorname{R}}(x_t,v_t)\|\alpha_t v_t\|^2,
     $$ 
     where $G_h(x_t, r(x_t,v_t))$ is defined in \eqref{eq.G_h(x,r(x, v)) retraction}.
\end{lemma}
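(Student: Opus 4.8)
The plan is to handle the two claims separately. The first is a pure convexity fact: since $h$ is convex, the map $y \mapsto D_h(y,x_t)$ is convex and vanishes at $y=x_t$, so writing $x_t^+ = (1-\alpha_t)x_t + \alpha_t(x_t+v_t)$ — a genuine convex combination because $\alpha_t \in (0,1)$ — and applying Jensen's inequality gives $D_h(x_t^+,x_t) \le (1-\alpha_t)D_h(x_t,x_t) + \alpha_t D_h(x_t+v_t,x_t) = \alpha_t D_h(x_t+v_t,x_t)$. Here $x_t+v_t$, and hence $x_t^+$, lies in $\dom h$ because $v_t$ solves \eqref{eq.update nabla} and $\dom h$ is convex.

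For the second claim I would expand $D_h(x_{t+1},x_t) - D_h(x_t^+,x_t) = \big(h(x_{t+1}) - h(x_t^+)\big) - \inner{\nabla h(x_t)}{x_{t+1}-x_t^+}$ and bound the two pieces by a localized mean-value argument. The key preliminary step is to confine $x_t$, $x_t^+$, $x_{t+1}$, and the segment joining the last two to the ball $\overline{\mathbb{B}}(x_t, r(x_t,v_t))$ with $r(x_t,v_t) = M_1^{\operatorname{R}}(x_t,v_t)\|v_t\|$: the point $x_t$ is the center; $\|x_t^+ - x_t\| = \alpha_t\|v_t\| \le \|v_t\| \le M_1^{\operatorname{R}}(x_t,v_t)\|v_t\|$ since $M_1^{\operatorname{R}}(x_t,v_t) \ge \|\operatorname{D}\retr(x_t,0_{x_t})\| = 1$ by property 2 of the retraction; and $\|x_{t+1}-x_t\| \le M_1^{\operatorname{R}}(x_t,\alpha_t v_t)\|\alpha_t v_t\| \le M_1^{\operatorname{R}}(x_t,v_t)\|v_t\| = r(x_t,v_t)$ by the first bound in \eqref{eq.retraction inequality} and the monotonicity $M_1^{\operatorname{R}}(x_t,\alpha_t v_t) \le M_1^{\operatorname{R}}(x_t,v_t)$ (the defining maximum is over a smaller ball when $\alpha_t<1$); the segment then lies in the ball by convexity. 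On this ball $\|\nabla h\|$ is controlled by $G_h(x_t,r(x_t,v_t))$ as defined in \eqref{eq.G_h(x,r(x, v)) retraction}, so the mean-value inequality gives $|h(x_{t+1}) - h(x_t^+)| \le G_h(x_t,r(x_t,v_t))\,\|x_{t+1}-x_t^+\|$, and Cauchy--Schwarz together with $\|\nabla h(x_t)\| \le G_h(x_t,r(x_t,v_t))$ gives $|\inner{\nabla h(x_t)}{x_{t+1}-x_t^+}| \le G_h(x_t,r(x_t,v_t))\,\|x_{t+1}-x_t^+\|$; adding these two estimates is what produces the factor $2$. It then remains to invoke the second bound in \eqref{eq.retraction inequality}, $\|x_{t+1}-x_t^+\| = \|\retr(x_t,\alpha_t v_t) - (x_t+\alpha_t v_t)\| \le M_2^{\operatorname{R}}(x_t,\alpha_t v_t)\|\alpha_t v_t\|^2 \le M_2^{\operatorname{R}}(x_t,v_t)\|\alpha_t v_t\|^2$, again using monotonicity of the curvature constant in the step length, which yields the stated inequality.

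I expect the main obstacle to be the bookkeeping of the non-uniform constants rather than any deep difficulty: because $\M$ need not be compact, $M_1^{\operatorname{R}}$, $M_2^{\operatorname{R}}$, and $G_h$ depend on the base point and the step, so one must be careful to (i) place every point entering the estimate inside a single ball $\overline{\mathbb{B}}(x_t,r(x_t,v_t))$ whose radius is read off from the full step $v_t$, and (ii) use the monotonicity of the retraction curvature constants in the step length to replace the constants evaluated along the damped step $\alpha_t v_t$ by those along $v_t$. Once these localizations are in place, the rest is routine: Jensen's inequality, the mean-value inequality, and Cauchy--Schwarz.
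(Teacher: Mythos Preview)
Your proposal is correct and follows essentially the same approach as the paper: the first claim via convexity of $h$ (Jensen on the convex combination $x_t^+=(1-\alpha_t)x_t+\alpha_t(x_t+v_t)$), and the second by expanding $D_h(x_{t+1},x_t)-D_h(x_t^+,x_t)$, confining the relevant points to $\overline{\mathbb{B}}(x_t,r(x_t,v_t))$, and invoking the second retraction inequality together with the monotonicity of $M_2^{\operatorname{R}}$ in the step length. The only cosmetic difference is that the paper bounds $h(x_{t+1})-h(x_t^+)$ via the convexity inequality $h(x_{t+1})-h(x_t^+)\le\inner{\nabla h(x_{t+1})}{x_{t+1}-x_t^+}$ (so it only needs $x_{t+1}$ in the ball), whereas you use the mean-value inequality along the segment $[x_t^+,x_{t+1}]$ (hence your extra, but correct, step placing $x_t^+$ and the segment in the ball).
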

\begin{proof}
From the definition of the Bregman distance and the convexity of \( h \), we obtain
\begin{align*}
      D_h(x_t^+, x_t) \ &= \ h(x_t + \alpha_t v_t) - h(x_t) - \inner{\nabla h(x_t)}{\alpha_t v_t} \\
      &= \ h\left(\alpha_t(x_t + v_t) + (1-\alpha_t)x_t\right) - h(x_t) - \inner{\nabla h(x_t)}{\alpha_t v_t} \\
      &\le \ \alpha_t h(x_t + v_t) + (1-\alpha_t)h(x_t) - h(x_t) - \inner{\nabla h(x_t)}{\alpha_t v_t} \\
      &= \ \alpha_t h(x_t + v_t) - \alpha_t h(x_t) - \inner{\nabla h(x_t)}{\alpha_t v_t} \\
      &= \ \alpha_t D_h(x_t + v_t, x_t).
\end{align*}
Next, we bound the error between $D_h(x_{t+1}, x_t)$ and $D_h(x_t^+, x_t)$. By the retraction inequality \eqref{eq.retraction inequality}, it follows
    \begin{align*}
        \|x_{t+1} - x_t\| = \|\retr(x_t,\alpha_t v_t) - x_t\| \le \alpha_t M_1^{\operatorname{R}}(x_t,v_t) \|v_t\| \le M_1^{\operatorname{R}}(x_t,v_t) \|v_t\|,
    \end{align*}
which implies that \(x_{t+1} \in \overline{\mathbb{B}}(x_t, r(x_t, v_t)) \). Let 
\begin{equation}\label{eq.G_h(x,r(x, v)) retraction}
    G_h(x,r(x, v)) \triangleq \max_{y \in \overline{\mathbb{B}}(x, r(x, v))} \|\nabla h(y)\|
\end{equation}
denote the maximum gradient norm of the reference function \( h \) over the Euclidean ball centered at \( x \) with radius \( r(x, v) \). Then we have
    \begin{align*}
       &D_h(x_{t+1}, x_t) - D_h(x_t^+, x_t) \\
        = \ &h(x_{t+1}) - h(x_t^+) - \inner{\nabla h(x_t)}{x_{t+1} -x_t^+} \\
        \le \ &\inner{\nabla h(x_{t+1})}{x_{t+1} - x_t^+} + \|\nabla h(x_t)\| \cdot \|x_{t+1} - x_t^+\| \\
        \le \ &2G_h(x_t, r(x_t, v_t)) \|x_{t+1} - x_t^+\| \\
        \le \ &2G_h(x_t, r(x_t, v_t)) M_2^{\operatorname{R}}(x_t,v_t) \|\alpha_t v_t\|^2 
    \end{align*}
    where the last inequality comes from \eqref{eq.retraction inequality}. 
\end{proof}

We now present the per-iteration descent lemma, which ensures a sufficient decrease in the function value for a small enough stepsize.

\begin{lemma}\label{lemma.descent property}
    Suppose Assumption \ref{assumption} holds. For any $\gamma_t \ge L$, there exists a constant $\alpha_t^\prime > 0$ such that for any $0 < \alpha_t \le \min\{1, \alpha_t^\prime\}$, the next iterate $x_{t+1}$ in Algorithm \ref{alg.relative GD} satisfies
   \begin{align*}
       F\left(x_{t+1}\right) - F(x_t) \ \le \ -\frac{\gamma_t\lambda\alpha_t}{4} \|v_t\|^2,
   \end{align*}
   where $\alpha_t^\prime$ is defined in \eqref{eq.alpha bar retraction}.
\end{lemma}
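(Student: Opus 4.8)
The plan is to compare $F$ at the true iterate $x_{t+1}=\retr(x_t,\alpha_t v_t)$ against its value at the tangent-space point $x_t^+=x_t+\alpha_t v_t$, and then invoke optimality of $v_t$ in the subproblem \eqref{eq.update nabla}. First I would apply relative smoothness of $f$ with $y=x_{t+1}$, $x=x_t$:
\[
 f(x_{t+1}) - f(x_t) \ \le \ \inner{\nabla f(x_t)}{x_{t+1}-x_t} + L\, D_h(x_{t+1},x_t).
\]
Writing $x_{t+1}-x_t=\alpha_t v_t+(x_{t+1}-x_t^+)$ and using $\|x_{t+1}-x_t^+\|\le M_2^{\operatorname{R}}(x_t,v_t)\,\alpha_t^2\|v_t\|^2$ (the second inequality in \eqref{eq.retraction inequality}, together with monotonicity of $M_2^{\operatorname{R}}(x_t,\cdot)$ in the radius and $\alpha_t<1$), and combining both parts of Lemma \ref{lemma.bound bregman deviation}, namely $D_h(x_{t+1},x_t)\le \alpha_t D_h(x_t+v_t,x_t)+2\,G_h(x_t,r(x_t,v_t))\,M_2^{\operatorname{R}}(x_t,v_t)\,\alpha_t^2\|v_t\|^2$, I get
\[
 f(x_{t+1}) - f(x_t) \ \le \ \alpha_t\inner{\nabla f(x_t)}{v_t} + L\alpha_t D_h(x_t+v_t,x_t) + \big(\|\nabla f(x_t)\|+2L\,G_h(x_t,r(x_t,v_t))\big)M_2^{\operatorname{R}}(x_t,v_t)\,\alpha_t^2\|v_t\|^2 .
\]
For the nonsmooth part I would split $g(x_{t+1})-g(x_t)=\big(g(x_{t+1})-g(x_t^+)\big)+\big(g(x_t^+)-g(x_t)\big)$, bound the first bracket by $L_g\|x_{t+1}-x_t^+\|\le L_g M_2^{\operatorname{R}}(x_t,v_t)\alpha_t^2\|v_t\|^2$ via Lipschitz continuity, and use convexity of $g$ with $x_t^+=\alpha_t(x_t+v_t)+(1-\alpha_t)x_t$ to get $g(x_t^+)-g(x_t)\le\alpha_t\big(g(x_t+v_t)-g(x_t)\big)$. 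Adding, all the excess terms collapse into a single coefficient $C_t:=M_2^{\operatorname{R}}(x_t,v_t)\big(\|\nabla f(x_t)\|+2L\,G_h(x_t,r(x_t,v_t))+L_g\big)$ and
\[
 F(x_{t+1}) - F(x_t) \ \le \ \alpha_t\Big(\inner{\nabla f(x_t)}{v_t}+g(x_t+v_t)-g(x_t)\Big) + L\alpha_t D_h(x_t+v_t,x_t) + C_t\,\alpha_t^2\|v_t\|^2 .
\]

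The second step is to control the $O(\alpha_t)$ part using optimality of $v_t$. The subproblem objective $v\mapsto \inner{\nabla f(x_t)}{v}+\gamma_t D_h(x_t+v,x_t)+g(x_t+v)$ is $\gamma_t\lambda$-strongly convex on $\reals^n$ — hence also on the subspace $\T_{x_t}\M$ — since $h$ is $\lambda$-strongly convex and the other two terms are convex. Because $v_t$ minimizes it over $\T_{x_t}\M$ and $0\in\T_{x_t}\M$, comparing the minimum value with the value at $v=0$ (which equals $g(x_t)$, as $D_h(x_t,x_t)=0$) yields the sharp inequality
\[
 \inner{\nabla f(x_t)}{v_t}+\gamma_t D_h(x_t+v_t,x_t)+g(x_t+v_t)-g(x_t) \ \le \ -\frac{\gamma_t\lambda}{2}\|v_t\|^2 .
\]
Since $\gamma_t\ge L$, I may replace $L\alpha_t D_h(x_t+v_t,x_t)$ by $\gamma_t\alpha_t D_h(x_t+v_t,x_t)$ in the bound on $F(x_{t+1})-F(x_t)$ and then apply this inequality, obtaining $F(x_{t+1})-F(x_t)\le -\tfrac{\gamma_t\lambda\alpha_t}{2}\|v_t\|^2+C_t\,\alpha_t^2\|v_t\|^2$. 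Finally I would set $\alpha_t':=\gamma_t\lambda/(4C_t)$ when $C_t>0$ (and $\alpha_t':=1$ otherwise), so that for any $0<\alpha_t\le\min\{1,\alpha_t'\}$ the quadratic term is at most $\tfrac{\gamma_t\lambda\alpha_t}{4}\|v_t\|^2$, giving $F(x_{t+1})-F(x_t)\le-\tfrac{\gamma_t\lambda\alpha_t}{4}\|v_t\|^2$; the case $v_t=0$ is trivial because then $x_{t+1}=x_t$. This also guarantees that the backtracking loop in Algorithm~\ref{alg.relative GD} terminates after finitely many shrinkages.

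The main difficulty, beyond bookkeeping, is twofold. First, the subproblem lives in the flat tangent-space geometry (a straight segment from $x_t$ through $x_t^+$, measured by $D_h$), whereas descent must be certified at the point $x_{t+1}$ produced by the nonlinear retraction, and relative smoothness, convexity, and Lipschitzness are all ambient-Euclidean properties; this mismatch is exactly what Lemma \ref{lemma.bound bregman deviation} and the retraction inequalities \eqref{eq.retraction inequality} absorb, at the cost of the constants $M_2^{\operatorname{R}}(x_t,v_t)$ and $G_h(x_t,r(x_t,v_t))$, which are not uniform over $\M$ but are finite for each fixed $(x_t,v_t)$ because $\M$ is $C^\infty$, $f\in C^1$, and $h\in C^1$. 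Second, one must extract the strong-convexity gap $-\tfrac{\gamma_t\lambda}{2}\|v_t\|^2$ from the subproblem, not merely the bare optimality inequality: since the statement must hold even for $\gamma_t=L$, there is no slack to absorb the term $L\,D_h(x_t+v_t,x_t)$ coming from relative smoothness unless this negative quadratic term is available.
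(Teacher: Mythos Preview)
Your proof is correct and follows essentially the same skeleton as the paper: apply relative smoothness, invoke Lemma~\ref{lemma.bound bregman deviation} and the retraction inequality~\eqref{eq.retraction inequality} to pass from $x_{t+1}$ to $x_t^+$, handle $g$ by Lipschitz continuity plus convexity along the segment, and choose $\alpha_t'$ to absorb the $O(\alpha_t^2)$ remainder. The one substantive difference is in how you extract the $-\tfrac{\gamma_t\lambda}{2}\|v_t\|^2$ term: the paper writes the KKT variational inequality for~\eqref{eq.update nabla}, carries a subgradient term $-\alpha_t\inner{s_t}{v_t}$ through the smooth part, bounds $g(x_t+v_t)-g(x_t)\le\inner{s_t}{v_t}$ in the nonsmooth part, and lets these cancel; you instead invoke $\gamma_t\lambda$-strong convexity of the subproblem objective directly and compare values at $v_t$ and $0$, which is cleaner and avoids the bookkeeping of the cancellation. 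A minor cosmetic point: because you apply relative smoothness with constant $L$ rather than $\gamma_t$ and only later upgrade the $D_h$ term, your $C_t$ contains $2L\,G_h$ rather than the paper's $2\gamma_t\,G_h$, so your $\alpha_t'$ is (harmlessly) larger than the one in~\eqref{eq.alpha bar retraction}; using $\gamma_t$ from the outset would reproduce~\eqref{eq.alpha bar retraction} exactly.
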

\begin{proof}
    By the relatively $L$-smooth of $f$ and $\gamma_t \ge L$, it holds that
    \begin{align*}
        &f(x_{t+1}) - f(x_t) \\
        \le \ &\inner{\nabla f(x_t)}{x_{t+1} - x_t} + \gamma_t D_h(x_{t+1}, x_t) \\
        = \ &\inner{\nabla f(x_t)}{x_{t+1} - x_t^+ + x_t^+ - x_t} + \gamma_t D_h(x_t^+, x_t) + \gamma_t D_h(x_{t+1}, x_t) - \gamma_t D_h(x_t^+, x_t) \\
        \le \ &\inner{\nabla f(x_t)}{x_{t+1} - x_t^+ + x_t^+ - x_t} + \gamma_t \alpha_t D_h(x_t + v_t, x_t) + 2\gamma_t G_h(x_t, r(x_t, v_t)) M_2^{\operatorname{R}}(x_t,v_t) \|\alpha_t v_t\|^2.
    \end{align*}
    We use Lemma \ref{lemma.bound bregman deviation} in the last inequality. For the inner product term, using the retraction property \eqref{eq.retraction inequality} yields
    \begin{align*}
        &\inner{\nabla f(x_t)}{x_{t+1} - x_t^+ + x_t^+ - x_t} \\
        = \ &\inner{\nabla f(x_t)}{x_{t+1} - x_t^+} + \alpha_t\inner{\nabla f(x_t)}{v_t} \\
        \le \ &M_2^{\operatorname{R}}(x_t,v_t)\|\nabla f(x_t)\| \cdot \|\alpha_t v_t\|^2 + \alpha_t\inner{\nabla f(x_t)}{v_t}.
    \end{align*}
    Consequently, we obtain
   \begin{align*}
       &f(x_{t+1}) - f(x_t) \\
       \le \ &\alpha_t\left(\inner{\nabla f(x_t)}{v_t} + \gamma_t D_h(x_t + v_t, x_t)\right) + \left(\|\nabla f(x_t)\| + 2\gamma_tG_h(x_t, r(x_t, v_t))\right)M_2^{\operatorname{R}}(x_t,v_t)\|\alpha_t v_t\|^2.
   \end{align*}
   Since $\M$ is embedded in the Euclidean space $\reals^n$, the tangent space $\T_{x_t}\M$ is closed and convex. Due to the optimality condition of constrained optimization, it follows
   \begin{align*}
      \inner{\nabla f(x_t) + \gamma_t\nabla h(x_t + v_t) - \gamma_t \nabla h(x_t) + s_t}{v - v_t} \ge 0, \ \forall v \in \T_{x_t}\M,  
   \end{align*}
   where $s_t \in \partial g(x_t + v_t)$. Specifically, choose \( v \) to be the zero vector in \( \T_x \mathcal{M} \); this yields
   \begin{align*}
       \inner{\nabla f(x_t) - \gamma_t\nabla h(x_t)}{v_t} \le \inner{\gamma_t \nabla h(x_t + v_t)}{-v_t} - \inner{s_t}{v_t}.
   \end{align*}
   Hence, we have
   \begin{align*}
       &\alpha_t\inner{\nabla f(x_t)}{v_t} + \gamma_t D_h(x_t^+, x_t) \\
        \le \ &\alpha_t \left[\inner{\nabla f(x_t)}{v_t} + \gamma_t D_h(x_t + v_t, x_t) \right] \\
       = \ &\alpha_t \left[\inner{\nabla f(x_t)}{v_t} + \gamma_t h (x_t + v_t) - \gamma_t h(x_t) - \gamma_t \inner{\nabla h(x_t)}{v_t} \right] \\
       = \ &\alpha_t \left[\inner{\nabla f(x_t) - \gamma_t \nabla h(x_t)}{v_t} + \gamma_t h (x_t + v_t) - \gamma_t h(x_t) \right] \\
       \le \ &\alpha_t \left[\inner{\gamma_t \nabla h(x_t + v_t)}{-v_t} + \gamma_t h (x_t + v_t) - \gamma_t h(x_t) \right] -\alpha_t \inner{s_t}{v_t}\\
       = \ &-\alpha_t \gamma_t \left[h(x_t) - h(x_t + v_t) - \inner{\nabla h(x_t + v_t)}{-v_t}\right] -\alpha_t \inner{s_t}{v_t}\\
       \le \ & -\frac{\alpha_t \gamma_t \lambda}{2}\|v_t\|^2 -\alpha_t \inner{s_t}{v_t}.
   \end{align*}
   Therefore, the descent property of smooth part can be established as follows:
    \begin{align*}
       f(x_{t+1}) - f(x_t) \ &\le \ -\frac{\alpha_t \gamma_t \lambda}{2}\|v_t\|^2 +  \left(\|\nabla f(x_t)\| + 2\gamma_tG_h(x_t, r(x_t, v_t))\right)M_2^{\operatorname{R}}(x_t,v_t)\|\alpha_t v_t\|^2 -\alpha_t \inner{s_t}{v_t}\\
       &= \ \left( \left(\|\nabla f(x_t)\| + 2\gamma_tG_h(x_t, r(x_t, v_t))\right)M_2^{\operatorname{R}}(x_t,v_t) - \frac{\gamma_t \lambda}{2\alpha_t}\right)\|\alpha_t v_t\|^2 -\alpha_t \inner{s_t}{v_t}.
   \end{align*}
   As for the nonsmooth part $g$, we have
   \begin{align*}
       g(x_{t+1}) - g(x_t) \ &= \ g(x_{t+1}) - g(x_t^+) + g(x_t^+) - g(x_t) \\
       &\le L_g\|x_{t+1} - x_t^+\| + \alpha_t \left(g(x_t + v_t) - g(x_t)\right) \\
        &\le L_g M_2^{\operatorname{R}}(x_t,v_t)\|\alpha_t v_t\|^2 + \alpha_t \inner{s_t}{v_t},
   \end{align*}
   where we use the $L_g$-Lipschitz continuity of $g$ and $g(x_t^+) = g(\alpha_t (x_t + v_t) + (1-\alpha_t)x_t) \le \alpha_t g(x_t + v_t) + (1-\alpha_t) g(x_t)$ in the second inequality, and the last inequality holds due to the convexity of $g$. Combining the decrease of smooth part and nonsmooth part yields
   \begin{align*}
       F(x_{t+1}) - F(x_t) \ \le \ \left( \left(\|\nabla f(x_t)\| + 2\gamma_tG_h(x_t, r(x_t, v_t)) + L_g\right)M_2^{\operatorname{R}}(x_t,v_t) - \frac{\gamma_t \lambda}{2\alpha_t}\right)\|\alpha_t v_t\|^2.
   \end{align*}
   By setting 
   \begin{equation}\label{eq.alpha bar retraction}
       \alpha_t^\prime \triangleq \frac{\gamma_t\lambda}{4 (\|\nabla f(x_t)\| + 2\gamma_t G_h(x_t, r(x_t, v_t)) + L_g)M_2^{\operatorname{R}}(x_t,v_t)},
   \end{equation}
   we conclude that for any $0 < \alpha_t \le \min\{1, \alpha_t^\prime\}$,
   \begin{align*}
       F(x_{t+1}) - F(x_t) \ \le \ -\frac{\gamma_t\lambda\alpha_t}{4} \|v_t\|^2.
   \end{align*}
   Thus, the proof is completed.
\end{proof}

The above lemma ensures that the while loop (Line 5) in Algorithm~\ref{alg.relative GD} is well-defined and terminates in a finite number of steps. It also guarantees that $x_{t+1} \in \mathcal{L}(\widetilde{x})$ whenever $x_{t} \in \mathcal{L}(\widetilde{x})$. However, the above descent lemma is a local result: since we do not assume the manifold \( \M \) to be compact, constants such as the stepsize $\alpha_t$ depends on the current iterate \( x_t \) and the update direction \( v_t \). If the stepsize sequence $\{\alpha_t\}_{t \ge 0}$ admits a uniform lower bound across iterations, we readily obtain a convergence result of Algorithm \ref{alg.relative GD} from above lemma. To establish such a uniform lower bound, we view the subproblem \eqref{eq.update grad} as a parametric optimization problem, and then show that the solution $v^*(x)$ is a continuous function. The proof requires standard concepts from variational analysis, which are provided in the Appendix (see Definitions \ref{def.set valued map} and \ref{def.continuity of set valued map}).

\begin{lemma}\label{lemma.continuity of v*}
Suppose \(\varphi: \mathbb{R}^n\times\mathbb{R}^n\to\mathbb{R}\) is jointly continuous in \((x,v)\), and for each fixed \(x\), the function \(v\mapsto\varphi(x,v)\) is \(\lambda\)-strongly convex with \(\lambda>0\). Let \(S\colon\mathbb{R}^n\rightrightarrows\mathbb{R}^n\) be a continuous set-valued map such that, for every \(x\), \(S(x)\) is a linear subspace of \(\mathbb{R}^n\). Then the minimizer $v^*(x) = \arg\min_{v\in S(x)} \varphi(x,v)$ defines a continuous function \(v^*: \mathbb{R}^n\to\mathbb{R}^n\).
\end{lemma}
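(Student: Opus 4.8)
The plan is to establish continuity at an arbitrary $x_0\in\mathbb{R}^n$ by a sequential argument: fix a sequence $x_k\to x_0$ and show $v^*(x_k)\to v^*(x_0)$. First I would record well-posedness. For each $x$, the set $S(x)$ is nonempty (it contains $0$ since it is a linear subspace), closed, and convex, while $v\mapsto\varphi(x,v)$ is continuous and $\lambda$-strongly convex, hence coercive; so the minimizer $v^*(x)$ exists and is unique, and $v^*$ is a well-defined function. Strong convexity together with the constrained optimality of $v^*(x)$ also yields the quadratic growth estimate $\varphi(x,v)\ge\varphi(x,v^*(x))+\frac{\lambda}{2}\norm{v-v^*(x)}^2$ for all $v\in S(x)$, which I will use repeatedly.

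The main obstacle is a uniform boundedness estimate for $\{v^*(x_k)\}$; the difficulty is that $S(x)$ is an \emph{unbounded} set (a linear subspace), so Berge's maximum theorem does not apply out of the box. I would circumvent this by exploiting that $0\in S(x_k)$ for all $k$. Taking $v=0$ in the growth estimate gives $\varphi(x_k,v^*(x_k))\le\varphi(x_k,0)$, while strong convexity of $\varphi(x_k,\cdot)$ with any subgradient $g_k\in\partial_v\varphi(x_k,0)$ gives $\varphi(x_k,v^*(x_k))\ge\varphi(x_k,0)+\inner{g_k}{v^*(x_k)}+\frac{\lambda}{2}\norm{v^*(x_k)}^2$. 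Combining the two yields $\frac{\lambda}{2}\norm{v^*(x_k)}^2\le-\inner{g_k}{v^*(x_k)}\le\norm{g_k}\,\norm{v^*(x_k)}$, i.e. $\norm{v^*(x_k)}\le\frac{2}{\lambda}\norm{g_k}$. It then suffices to bound $\norm{g_k}$ near $x_0$: testing the subgradient inequality for $\varphi(x_k,\cdot)$ at the unit vector $g_k/\norm{g_k}$ gives $\norm{g_k}\le\max_{\|u\|=1}\varphi(x_k,u)-\varphi(x_k,0)$, and the right-hand side is a continuous function of $x_k$ (maximum of a jointly continuous function over the fixed compact unit sphere), hence bounded on a neighborhood of $x_0$. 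Thus $\{v^*(x_k)\}$ is bounded. This subgradient/local-boundedness step is the crux of the argument.

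Finally I would identify the limit. Since $\{v^*(x_k)\}$ is bounded in $\mathbb{R}^n$, it is enough to show that every convergent subsequence has limit $v^*(x_0)$, which then forces $v^*(x_k)\to v^*(x_0)$. Let $v^*(x_{k_j})\to\bar v$. Outer semicontinuity of $S$ (part of the continuity assumption; see Definitions~\ref{def.set valued map}--\ref{def.continuity of set valued map}) gives $\bar v\in S(x_0)$, since $v^*(x_{k_j})\in S(x_{k_j})$ and $x_{k_j}\to x_0$. For any $w\in S(x_0)$, inner semicontinuity of $S$ provides $w_j\in S(x_{k_j})$ with $w_j\to w$, so $\varphi(x_{k_j},v^*(x_{k_j}))\le\varphi(x_{k_j},w_j)$; letting $j\to\infty$ and invoking joint continuity of $\varphi$ gives $\varphi(x_0,\bar v)\le\varphi(x_0,w)$. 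Hence $\bar v$ minimizes $\varphi(x_0,\cdot)$ over $S(x_0)$, so by uniqueness $\bar v=v^*(x_0)$. Therefore $v^*(x_k)\to v^*(x_0)$, and since $x_0$ was arbitrary, $v^*$ is continuous. The only steps needing genuine care are the local boundedness of the subgradients $g_k$ and the correct use of inner/outer semicontinuity of $S$; the remaining manipulations are routine.
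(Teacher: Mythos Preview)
Your proposal is correct and follows essentially the same approach as the paper: existence/uniqueness from strong convexity, the bound $\|v^*(x_k)\|\le\frac{2}{\lambda}\|g_k\|$ via comparison with $0\in S(x_k)$, local boundedness of the subgradients at $0$, and then the subsequence argument using outer/inner semicontinuity of $S$ together with joint continuity of $\varphi$. The only cosmetic difference is in the subgradient-boundedness step, where you test the subgradient inequality at $g_k/\|g_k\|$ and invoke continuity of $x\mapsto\max_{\|u\|=1}\varphi(x,u)-\varphi(x,0)$, whereas the paper argues by contradiction using the point $g_k/\|g_k\|^2$; both are equally valid and yield the same conclusion.
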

\begin{proof}
Since $\varphi(x, \cdot)$ is $\lambda$–strongly convex for any fixed $x$, its restriction to $S(x)$ remains $\lambda$–strongly convex and hence coercive. Consequently, the minimum $v^*(x)$ on the subspace $S(x)$ exists and, by strong convexity, is unique. Notice that $\varphi(x, v^*(x)) \leq \varphi(x, 0)$, and $\varphi(x, v^*(x)) \geq \varphi(x, 0) + \langle s_0(x), v^*(x) \rangle + \frac{\lambda}{2} \|v^*(x)\|^2$, where $s_0(x) \in \partial_v \varphi(x, 0)$. Combining these, we get $0 \geq \langle s_0(x), v^*(x) \rangle + \frac{\lambda}{2} \|v^*(x)\|^2 \ge -\|s_0(x)\| \cdot \|v^*(x)\| + \frac{\lambda}{2} \|v^*(x)\|^2$. Suppose $v^*(x) \neq 0$, then we have $\|v^*(x)\| \leq 2\|s_0(x)\| / \lambda$; otherwise $v^*(x) = 0$, $\|v^*(x)\| \leq 2\|s_0(x)\| / \lambda$ stills holds. 

Fix $x$, we claim that $s_0(\cdot)$ is locally bounded around $x$. Otherwise, there exists a sequence $x_k \to x$ and $s_0(x_k) \in \partial_v \varphi(x_k, 0)$ such that $\|s_0(x_k)\| \to \infty$. Let $\zeta(x_k) \triangleq s_0(x_k) / \|s_0(x_k)\|^2$. Clearly, $\inner{\zeta(x_k)}{s_0(x_k)} = 1$ and $\|\zeta(x_k)\| = 1 / \|s_0(x_k)\| \to 0$. By the strong convexity of $\varphi(x_k, \cdot)$, it follows $\varphi(\zeta(x_k), x_k) \ge \varphi(x_k, 0) + \inner{s_0(x_k)}{\zeta(x_k)} + \frac{\lambda}{2}\|\zeta(x_k)\|^2 \ge \varphi(x_k, 0) + 1$. Let $k \to \infty$. Since $\varphi$ is jointly continuous, we have $\varphi(x, 0) \ge \varphi(x, 0) + 1$, which is a contradiction.

Choose a sequence $x_k \to x$ and set $v_k \triangleq v^*(x_k)$. By the above, the sequence $\{v_k\}$ is bounded, so it admits a convergent subsequence $v_{k_j} \to v^\prime$. Since $v_{k_j} \in S(x_{k_j})$ and $S$ is outer semicontinuous, it follows that $v^\prime \in S(x)$. For any $w \in S(x)$, by the inner semicontinuous property of $S$, there exist a sequence $w_j \in S(x_{k_j})$ with $w_j \to w$. By continuity of $\varphi$, we have $\varphi(x_{k_j}, v_{k_j}) \to \varphi(x, v^\prime)$ and $\varphi(x_{k_j}, w_j) \to \varphi(x, w)$. Since $v_{k_j}$ is the unique minimizer over $S(x_{k_j})$, we have $\varphi(x_{k_j}, v_{k_j}) \leq \varphi(x_{k_j}, w_j)$ for each $j$, hence $\varphi(x, v^\prime) \leq \varphi(x, w)$. Thus, $v^\prime$ is the unique minimizer of $\varphi(x, \cdot)$ over $S(x)$, i.e., $v^\prime = v^*(x)$. Therefore, any convergent subsequence of $\{v_k\}$ converges to $v^*(x)$. Since all such subsequences have the same limit, by contradiction we could conclude that $v_k \to v^*(x)$ as $k \to \infty$, i.e., $v^*(x)$ is continuous in $x$.
\end{proof}

For the subproblem \eqref{eq.update grad}, we choose $\varphi(x,v) = \inner{\grad f(x)}{v} + \gamma D_h(x + v, x) + g(x + v), \ \gamma > 0$. By Corollary \ref{coro.tangent space charactirization}, for any $x \in \M$, there exists a smooth mapping $\phi_x$ defined on an open neighborhood $\mathcal{U}_x$ of $x$, satisfying $\operatorname{rank}(\operatorname{J}\phi_x(y))=n-d$, and $\T_y\M = \operatorname{ker}(\operatorname{J}\phi_x(y))$ for any $y \in \mathcal{U}_x$. Clearly, the tangent space mapping $\T_y\M = \operatorname{ker}(\operatorname{J}\phi_x(y))$ is a continuous set-valued map on $\mathcal{U}_x$. Additionally, $\varphi$ is jointly continuous in $(x,v)$, and $\lambda$-strongly convex for each fixed $x$. Thus, by applying the lemma above, we conclude that the solution mapping $v^*(x)$ is continuous, which can be used to establish the following Theorem.

\begin{theorem}\label{thm.convergence of retraction based}
  Suppose Assumption \ref{assumption} holds. Set the initial point $x_0 = \widetilde{x}$. Then every limit point of the sequence $\left\{x_t\right\}_{t \ge 0}$ generated by Algorithm \ref{alg.relative GD} with $\gamma_t = L$ satisfies the optimality condition of problem \eqref{eq.main}. Moreover, for any given accuracy \( \epsilon > 0 \), after at most \( \mathcal{O}(\epsilon^{-2}) \) iterations, Algorithm~\ref{alg.relative GD} with \( \gamma_t = L \) returns a direction \( v_t \) satisfying \( \|v_t\| \le \epsilon \).
\end{theorem}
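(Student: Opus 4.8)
The plan is to confine every iterate to the compact sublevel set, then promote the local descent estimate of Lemma~\ref{lemma.descent property} to a \emph{global} one by showing that the backtracking stepsizes cannot degenerate, and finally telescope to obtain both the complexity bound and the characterization of limit points. First I would note that $x_0 = \widetilde{x} \in \mathcal{L}(\widetilde{x})$, and that Lemma~\ref{lemma.descent property} together with the exit test of the line search (Line~5 of Algorithm~\ref{alg.relative GD}) forces $F(x_{t+1}) \le F(x_t)$; hence by induction $x_t \in \mathcal{L}(\widetilde{x})$ for all $t$, and $\mathcal{L}(\widetilde{x})$ is compact by Assumption~\ref{assumption}.

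The crux is a uniform lower bound $\alpha_{\min} > 0$ on the accepted stepsizes. Since $\gamma_t = L$, the direction $v_t$ solving \eqref{eq.update nabla} equals $v^*(x_t)$, the solution of the parametric problem \eqref{eq.update grad} with $\gamma = L$, which is continuous in $x$ by Lemma~\ref{lemma.continuity of v*}. Therefore the pairs $\{(x_t, v_t)\}_{t \ge 0}$ all lie in the compact set $\{(x, v^*(x)) : x \in \mathcal{L}(\widetilde{x})\}$, so $\|v_t\|$, $\|\nabla f(x_t)\|$, and the radii $r(x_t, v_t)$ are uniformly bounded; consequently $G_h(x_t, r(x_t, v_t))$ is bounded (continuity of $\nabla h$) and $M_1^{\operatorname{R}}(x_t, v_t), M_2^{\operatorname{R}}(x_t, v_t)$ are bounded, being maxima of continuous quantities (smoothness of the retraction) over balls that vary continuously and stay inside a fixed compact set. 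Feeding these uniform bounds into the formula \eqref{eq.alpha bar retraction} yields $\alpha_t^\prime \ge \underline{\alpha}$ for some $\underline{\alpha} > 0$ independent of $t$. Then, by the contrapositive of Lemma~\ref{lemma.descent property}, the backtracking loop terminates with $\alpha_t \ge \min\{1, \rho\,\underline{\alpha}\} =: \alpha_{\min} > 0$, so the iterates obey the global descent inequality $F(x_{t+1}) - F(x_t) \le -\tfrac{L\lambda \alpha_{\min}}{4}\|v_t\|^2$.

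Given this, the complexity claim follows by telescoping over $t = 0, \dots, T-1$ and using $F(x_T) \ge F^* > -\infty$, which gives $\sum_{t=0}^{T-1}\|v_t\|^2 \le \frac{4(F(\widetilde{x}) - F^*)}{L\lambda\alpha_{\min}}$; hence $\min_{0 \le t \le T-1}\|v_t\|^2 \le \frac{4(F(\widetilde{x}) - F^*)}{L\lambda\alpha_{\min}\, T}$, so $T = \mathcal{O}(\epsilon^{-2})$ iterations suffice to produce $\|v_t\| \le \epsilon$. The same summability shows $\|v_t\| \to 0$. For the limit-point statement, let $x^* = \lim_k x_{t_k}$ along a subsequence; continuity of $v^*$ (Lemma~\ref{lemma.continuity of v*}) gives $v^*(x^*) = \lim_k v_{t_k} = 0$, and substituting $v^*(x^*) = 0$ into the first-order optimality condition of the subproblem \eqref{eq.update grad} annihilates the Bregman term $\P_{\T_{x^*}\M}(\nabla h(x^*) - \nabla h(x^*)) = 0$, leaving $0 \in \grad f(x^*) + \P_{\T_{x^*}\M}(\partial g(x^*))$, i.e., the optimality condition of \eqref{eq.main}.

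I expect the main obstacle to be the second paragraph, the uniform stepsize bound. Because $\M$ is not assumed compact, none of the retraction constants $M_1^{\operatorname{R}}, M_2^{\operatorname{R}}$ or the gradient-norm constant $G_h$ are a priori uniform, and the per-iteration threshold $\alpha_t^\prime$ in \eqref{eq.alpha bar retraction} could in principle shrink to zero along the trajectory. The whole argument hinges on invoking Lemma~\ref{lemma.continuity of v*} to trap $(x_t, v_t)$ in a compact set, and then on the (routine but slightly delicate) fact that $M_1^{\operatorname{R}}, M_2^{\operatorname{R}}$ are locally bounded in $(x,v)$ as maxima of continuous functions over continuously varying compact balls; this is precisely the reason the continuity of $v^*$ was established.
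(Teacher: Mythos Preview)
Your proposal is correct and follows essentially the same route as the paper: confine iterates to the compact sublevel set via the line-search descent, invoke Lemma~\ref{lemma.continuity of v*} to trap $(x_t,v_t)$ in a compact set, deduce uniform bounds on $M_1^{\operatorname{R}}, M_2^{\operatorname{R}}, G_h$ and hence on $\alpha_t'$, and then telescope. The only cosmetic difference is that the paper makes the boundedness of $M_1^{\operatorname{R}}(x,v^*(x))$, $M_2^{\operatorname{R}}(x,v^*(x))$, and $G_h(x,r(x,v^*(x)))$ explicit by proving each is \emph{continuous} in $x$ via Berge's Maximum Theorem (Theorem~\ref{thm.Berge}), whereas you argue directly that $(x_t,v_t)$ ranges over the compact graph $\{(x,v^*(x)):x\in\mathcal{L}(\widetilde{x})\}$ and that the relevant maxima are locally bounded there; both arguments are valid and yield the same conclusion. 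Your limit-point argument, using continuity of $v^*$ to pass $v_{t_k}\to v^*(x^*)=0$ and then reading off the optimality condition of \eqref{eq.update grad}, is in fact slightly more explicit than the paper's one-line justification.
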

\begin{proof}
    We first argue that the previously defined constants \( M_1^{\operatorname{R}}(x,v) \), \( M_2^{\operatorname{R}}(x,v) \), and \( G_h(x, r(x,v)) \) are continuous in \( x \). Recall that
    \begin{align*}
        M_1^{\operatorname{R}}(x, v^*(x)) \ = \ \max_{\xi \in \overline{\mathbb{B}}(x, \|v^*(x)\|)}\|\operatorname{D}\retr(x, \xi)\|.
    \end{align*}
    Since $v^*(x)$ is continuous, then the set‐valued map $\overline{\mathbb{B}}(x, \|v^*(x)\|)$ varies continuously with \(x\). Clearly, for each $x$, $\overline{\mathbb{B}}(x, \|v^*(x)\|)$ is non-empty and compact. Hence, due to the smoothness of the retraction, it follows from Berge’s Maximum Theorem (cf. Theorem \ref{thm.Berge}) that \( M_1^{\operatorname{R}}(x, v^*(x)) \) is continuous in $x$. By the same argument, \( M_2^{\operatorname{R}}(x, v^*(x)) \) is also continuous in $x$. As a consequence, the radius \( r(x, v^*(x)) = M_1^{\operatorname{R}}(x, v^*(x))\|v^*(x)\| \) is continuous. By applying Berge’s Maximum Theorem again, and noting that $G_h(x, r(x, v^*(x)))$ defined in \eqref{eq.G_h(x,r(x, v)) retraction} is the maximum value over compact-valued continuous set-valued mapping, we conclude its continuity in $x$.
    
    Now we proceed to show the convergence. Without loss of generality, assume that \( v_t \neq 0 \) for all \( t \ge 0 \); otherwise, \( x_t \) is already a stationary point. By Lemma \ref{lemma.descent property}, we know the Algorithm \ref{alg.relative GD} is monotone, and the iterates \( \{x_t\}_{t \ge 0} \) remain within the sublevel set $\mathcal{L}(\widetilde{x})$ when $x_0 = \widetilde{x}$. Since $\mathcal{L}(\widetilde{x})$ is compact, it follows that the sequence \( \{v_t\}_{t \ge 0} \) is bounded. As a result, the sequences \( \{M_1^{\operatorname{R}}(x_t, v_t)\}_{t \ge 0} \), \( \{M_2^{\operatorname{R}}(x_t, v_t)\}_{t \ge 0} \), and \( \{G_h(x_t, r(x_t, v_t))\}_{t \ge 0} \) are bounded. Hence, there exist constants \(M_2^{\operatorname{R}} > 0\) and \(G_h > 0\) such that \( M_2^{\operatorname{R}}(x_t, v_t) \le M_2^{\operatorname{R}} \) and \( G_h(x_t, r(x_t, v_t)) \le G_h \) for all \( t \ge 0 \). Due to the backtracking linesearch, it holds that \( \alpha_t \ge \rho \alpha_t^\prime \). Then we can find a constant \( \alpha^\prime > 0 \) such that for all \( t \ge 0 \),
    \begin{align*}
        \alpha_t \ \ge \ &\frac{\rho \gamma_t \lambda}{4 \left( \|\nabla f(x_t)\| + 2G_h(x_t, r(x_t, v_t)) \gamma_t + L_g \right) M_2^{\operatorname{R}}(x_t, v_t)} 
        \\
        \ge \ &\frac{\rho L \lambda}{4 \left( G_f + 2G_h L + L_g \right) M_2^{\operatorname{R}}} \ \triangleq \ \alpha^\prime,
    \end{align*}
    where \( G_f \) is the upper bound for sequence \( \{\|\nabla f(x_t)\|\}_{t \ge 0} \). Since \( F \) is lower bounded on \( \M \), the decrease property established in Lemma~\ref{lemma.descent property} implies that \( \lim_{t \to \infty} \|v_t\| = 0 \). Therefore, the sequence \( \{x_t\}_{t \ge 0} \) converges to a stationary point of problem \eqref{eq.main}. Moreover, the compactness of $\mathcal{L}(x_0)$ yields that \(\{x_t\}_{t \ge 0}\) admits at least one limit point.  
    
    Finally, we analyze the iteration complexity. Suppose that Algorithm \ref{alg.relative GD} with $\gamma_t = L$ does not terminate after $T$ iterations; i.e., $\|v_t\| > \epsilon / L$ for all $t=0,1, \ldots, T-1$, then it follows 
     \begin{align*}
       F(x_{t+1}) - F(x_t) \ \le \ -\frac{L\lambda\alpha^\prime}{4} \|v_t\|^2 \ \le \ - \frac{\lambda\alpha^\prime}{4 L} \epsilon^2.
   \end{align*}
    Notice that $F^* - F(x_0) \le F(x_t) - F(x_0) = \sum_{t=0}^{T-1} [F(x_{t+1}) - F(x_t)]$, and we conclude $T = \mathcal{O}(\epsilon^{-2})$. The proof is completed.
\end{proof}

\begin{remark}
    When the reference function is chosen as \( h(x) = \frac{1}{2}\|x\|^2 \) and \( \M \) is the Stiefel manifold, the iteration complexity result in Theorem~\ref{thm.convergence of retraction based} recovers the result established in \cite{chen2020proximal}. Our analysis generalizes their result by incorporating a Bregman distance framework and allowing for general Riemannian embedded submanifolds. Moreover, we only assume that the sublevel set is bounded, which is a mild and commonly used condition in complexity analysis. 
\end{remark}

The subproblem \eqref{eq.update nabla} in Algorithm~\ref{alg.relative GD} is a strongly convex minimization problem over a linear subspace. It can be efficiently solved via projected gradient descent or the regularized semismooth Newton method proposed in \cite{chen2020proximal}. In the smooth setting, i.e., $g \equiv 0$ in \eqref{eq.main}, by substituting the definition of the Bregman distance, the subproblem~\eqref{eq.update nabla} reduces to 
\begin{equation}\label{eq.simplified subproblem}
    \min_{v \in \mathcal{T}_{x_t}\M} \ \inner{c_t}{v} + h(x_t +v),
\end{equation}
where $c_t \triangleq \nabla f(x_t) / \gamma_t - \nabla h(x_t)$. Next, we prove that, with the quartic reference function \( h(x) = \frac{1}{4}\|x\|^4 + \frac{1}{2}\|x\|^2 \), the update direction $v_t$ admits an explicit closed-form expression on any Riemannian embedded submanifold. Besides, when the manifold is a sphere $\mathbb{S}^{n-1} = \{x \in \reals^n: \|x\| = 1\}$ and the reference function is chosen to be either the log-barrier function or the entropy function, the corresponding subproblem reduces to solving a one-dimensional nonlinear equation, which is similar to its Euclidean counterpart (see Eq. (18) in \cite{lu2018relatively}). Hence, one can use the bisection method, Newton’s method, or any other suitable scalar root-finding method to efficiently compute the solution.

\begin{proposition}\label{propo.quartic closed form}
    Suppose the nonsmooth term $g \equiv 0$, and consider the reference function $h(x) = \frac{1}{4}\|x\|^4 + \frac{1}{2}\|x\|^2$. Then the solution to subproblem \eqref{eq.update nabla} admits the closed form $v_t = -\theta_t \cdot \P_{\mathcal{T}_{x_t}\M}\left(\nabla f(x_t) / \gamma_t - \nabla h(x_t)\right) - \P_{\mathcal{T}_{x_t}\M}\left(x_t\right)$, where $\theta_t > 0$ is the unique positive solution to equation \eqref{eq.one dimensional equation}.
\end{proposition}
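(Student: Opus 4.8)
The plan is to use that, with $g\equiv 0$, the subproblem reduces to the strongly convex program \eqref{eq.simplified subproblem} over the linear subspace $\T_{x_t}\M$, write its (necessary and sufficient) first-order optimality condition, and solve it explicitly by splitting $x_t+v$ into tangent and normal parts. For the quartic reference function $\nabla h(z)=(1+\norm{z}^2)z$, so specializing the optimality condition displayed before Algorithm~\ref{alg.relative GD} to $g\equiv0$ gives
\begin{equation*}
  \P_{\T_{x_t}\M}\left(c_t+(1+\norm{x_t+v_t}^2)(x_t+v_t)\right)=0,\qquad c_t\triangleq\nabla f(x_t)/\gamma_t-\nabla h(x_t).
\end{equation*}
Since $v_t\in\T_{x_t}\M$ we have $\P_{\T_{x_t}\M}(x_t+v_t)=\P_{\T_{x_t}\M}(x_t)+v_t$; setting $\theta_t\triangleq 1/(1+\norm{x_t+v_t}^2)\in(0,1]$ and rearranging yields $v_t=-\theta_t\,\P_{\T_{x_t}\M}(c_t)-\P_{\T_{x_t}\M}(x_t)$, which is exactly the claimed expression.

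It remains to pin down the scalar $\theta_t$. Substituting this formula for $v_t$ back and using the orthogonal decomposition $x_t=\P_{\T_{x_t}\M}(x_t)+\P_{\N_{x_t}\M}(x_t)$, one gets $x_t+v_t=\P_{\N_{x_t}\M}(x_t)-\theta_t\,\P_{\T_{x_t}\M}(c_t)$, whose two summands live in the mutually orthogonal subspaces $\N_{x_t}\M$ and $\T_{x_t}\M$. The Pythagorean identity then gives $\norm{x_t+v_t}^2=\norm{\P_{\N_{x_t}\M}(x_t)}^2+\theta_t^2\norm{\P_{\T_{x_t}\M}(c_t)}^2$. Feeding this into $1/\theta_t=1+\norm{x_t+v_t}^2$ and clearing the denominator produces the cubic
\begin{equation}\label{eq.one dimensional equation}
  a_t\,\theta_t^3+b_t\,\theta_t-1=0,\qquad a_t\triangleq\norm{\P_{\T_{x_t}\M}(c_t)}^2,\quad b_t\triangleq 1+\norm{\P_{\N_{x_t}\M}(x_t)}^2 .
\end{equation}
Because $a_t\ge 0$ and $b_t\ge 1$, the map $\psi(\theta)=a_t\theta^3+b_t\theta-1$ is strictly increasing on $[0,\infty)$ with $\psi(0)=-1<0$ and $\psi(\theta)\to\infty$, so it has a unique positive root $\theta_t$ (indeed $\theta_t\in(0,1]$ since $\psi(1/b_t)=a_t/b_t^3\ge 0$).

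To close the argument I would record that $h$ is $1$-strongly convex (its Hessian $(1+\norm{z}^2)\ident+2zz^\top\succeq\ident$), hence the objective of \eqref{eq.update nabla} with $g\equiv0$ is strongly convex on the closed subspace $\T_{x_t}\M$ and admits a unique minimizer, which must satisfy the optimality condition above; conversely, the reduction just carried out shows any such stationary point is given by the displayed formula with $\theta_t$ a positive root of \eqref{eq.one dimensional equation}, and uniqueness of that root identifies $v_t$. I do not anticipate a genuine obstacle here; the only step requiring care is keeping the tangent/normal bookkeeping consistent — in particular observing that the normal component of $x_t+v_t$ is frozen at $\P_{\N_{x_t}\M}(x_t)$ for every admissible $v_t$ — so that the self-referential definition $\theta_t=1/(1+\norm{x_t+v_t}^2)$ genuinely collapses to the one-dimensional equation \eqref{eq.one dimensional equation}.
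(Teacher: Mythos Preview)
Your proposal is correct and follows essentially the same approach as the paper: both write the optimality condition $\P_{\T_{x_t}\M}(c_t+\nabla h(x_t+v_t))=0$, use $\nabla h(z)=(1+\|z\|^2)z$ together with $\P_{\T_{x_t}\M}(x_t+v_t)=\P_{\T_{x_t}\M}(x_t)+v_t$, and then reduce the self-referential scalar to the cubic \eqref{eq.one dimensional equation}. Your treatment is slightly more complete in two respects—your direct definition $\theta_t=1/(1+\|x_t+v_t\|^2)$ handles the degenerate case $\P_{\T_{x_t}\M}(c_t)=0$ uniformly (the paper splits it off as a separate case), and you explicitly justify uniqueness of the positive root and sufficiency of the first-order condition via strong convexity, which the paper leaves implicit.
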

\begin{proof}
    First notice that the subproblem solution $v_t$ satisfies $\P_{\mathcal{T}_{x_t}\M}\left(c_t + \nabla h(x_t + v_t)\right) = 0$ due to the optimality condition of \eqref{eq.simplified subproblem}. When $h(x) = \frac{1}{4}\|x\|^4 + \frac{1}{2}\|x\|^2$, it implies that
     \begin{align*}
        \P_{\mathcal{T}_{x_t}\M}\left(c_t\right) +(\|x_t + v_t\|^2 + 1)\left(\P_{\mathcal{T}_{x_t}\M}\left(x_t\right)+ v_t\right) = 0.
    \end{align*}
    If $\P_{\mathcal{T}_{x_t}\M}\left(c_t\right) = 0$, then $v_t = -\P_{\mathcal{T}_{x_t}\M}\left(x_t\right)$; otherwise, there exists a constant $\theta_t > 0$ such that $-\theta_t \cdot \P_{\mathcal{T}_{x_t}\M}\left(c_t\right) = \P_{\mathcal{T}_{x_t}\M}\left(x_t\right)+ v_t$, i.e., $v_t = -\theta_t \cdot \P_{\mathcal{T}_{x_t}\M}\left(c_t\right) - \P_{\mathcal{T}_{x_t}\M}\left(x_t\right)$. Hence, we obtain $\theta_t\left(\|x_t -\theta_t \cdot \P_{\mathcal{T}_{x_t}\M}\left(c_t\right) - \P_{\mathcal{T}_{x_t}\M}\left(x_t\right)\|^2 + 1\right) = 1$. It follows that $\theta_t$ satisfies    \begin{equation}\label{eq.one dimensional equation}
      \|\P_{\mathcal{T}_{x_t}\M}\left(c_t\right)\|^2 \theta^3 + \left(\|\P_{\mathcal{N}_{x_t}\M}\left(x_t\right)\|^2 + 1\right)\theta - 1 = 0.  
    \end{equation}
    Clearly, $\theta_t$ is the unique positive solution of above equation. By Cardano's formula, $\theta_t$ can be expressed in a closed form.
\end{proof}

\begin{proposition}\label{propo.sphere closed form}
    Suppose the nonsmooth term $g \equiv 0$, and consider the sphere $\M = \mathbb{S}^{n-1}$. If the reference function is chosen as $h(x) = -\sum_{i=1}^n \log x_{i}$ or $h(x) = \sum_{i=1}^n x_{i}\log x_{i}$, then solving the subproblem \eqref{eq.update nabla} reduces to solving a one-dimensional nonlinear equation.
\end{proposition}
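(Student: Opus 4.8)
The plan is to reduce \eqref{eq.update nabla} (which, since $g\equiv 0$, is exactly the problem \eqref{eq.simplified subproblem}) via its first-order optimality condition, exploiting that the normal space of the sphere is one-dimensional. As recorded in the proof of Proposition~\ref{propo.quartic closed form}, the minimizer $v_t$ of \eqref{eq.simplified subproblem} is characterized by $\P_{\T_{x_t}\M}\big(c_t + \nabla h(x_t + v_t)\big) = 0$, i.e.\ $c_t + \nabla h(x_t + v_t) \in \N_{x_t}\M$, where $c_t \triangleq \nabla f(x_t)/\gamma_t - \nabla h(x_t)$. For $\M = \mathbb{S}^{n-1}$ and $x_t\in\mathbb{S}^{n-1}$ one has $\N_{x_t}\M = \operatorname{span}\{x_t\}$, so the optimality condition becomes $c_t + \nabla h(x_t + v_t) = \mu\, x_t$ for some scalar $\mu\in\reals$. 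Writing $u_t \triangleq x_t + v_t$ and using that, on the relevant domain, $\nabla h$ acts componentwise and is invertible (for the log-barrier, $\nabla h$ maps $\mathbb{R}^n_{++}$ bijectively onto the negative orthant; for the entropy, $\nabla h$ maps $\mathbb{R}^n_{++}$ bijectively onto $\reals^n$), I would invert to obtain $u_t = (\nabla h)^{-1}(\mu x_t - c_t)$, an explicit function of the single unknown $\mu$.

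Next I would impose the remaining constraint $v_t\in\T_{x_t}\M$, which for the sphere is $\inner{v_t}{x_t}=0$, equivalently $\inner{u_t}{x_t} = \|x_t\|^2 = 1$. Substituting the expression for $u_t$ yields the scalar equation
\[
  \psi(\mu)\ \triangleq\ \sum_{i=1}^n (x_t)_i\,\big[(\nabla h)^{-1}(\mu x_t - c_t)\big]_i\ =\ 1,
\]
which specializes, for the log-barrier $h(x) = -\sum_i\log x_i$, to $\sum_{i=1}^n (x_t)_i\big/\big((c_t)_i - \mu(x_t)_i\big) = 1$, and for the entropy $h(x) = \sum_i x_i\log x_i$, to $\sum_{i=1}^n (x_t)_i\exp\!\big(\mu(x_t)_i - (c_t)_i - 1\big) = 1$. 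In both cases the subproblem is thus reduced to finding the root of a one-dimensional function, after which $v_t = (\nabla h)^{-1}(\mu x_t - c_t) - x_t$ is recovered explicitly, which is precisely the claimed reduction.

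Finally I would justify solvability and uniqueness of the relevant root. Strong convexity of $\varphi(x_t,\cdot)$ on the feasible domain guarantees that the minimizer $v_t$—hence a corresponding $\mu$—exists and is unique, so $\psi(\mu)=1$ is solvable; uniqueness of the root also follows from monotonicity, since a short computation gives $\psi'(\mu) = \sum_i (x_t)_i^2\big/\big((c_t)_i-\mu(x_t)_i\big)^2 > 0$ in the log-barrier case and $\psi'(\mu) = \sum_i (x_t)_i^2\exp\!\big(\mu(x_t)_i-(c_t)_i-1\big) > 0$ in the entropy case (both strict because $x_t\in\mathbb{S}^{n-1}\cap\mathbb{R}^n_{++}$ for these reference functions to be finite at $x_t$), so $\psi$ is strictly increasing. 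The hard part—really the only delicate point—will be the domain bookkeeping in the log-barrier case: one must restrict $\mu$ to the interval on which every $(c_t)_i - \mu(x_t)_i$ keeps the sign making $u_t$ lie in $\mathbb{R}^n_{++}$ (so that $h$ is differentiable there and the stated optimality condition is valid), and verify that the root of $\psi$ indeed falls in this interval—using that $\psi\to 0^+$ and $\psi\to+\infty$ at the two ends of that interval. The entropy case is cleaner, since $\nabla h$ is globally invertible on $\mathbb{R}^n_{++}$ with full image $\reals^n$, so no sign constraint on $\mu$ is needed and $\psi:\reals\to(0,\infty)$ is a strictly increasing surjection. Everything else is a routine reduction, after which bisection or Newton's method solves the scalar equation.
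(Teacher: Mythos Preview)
Your proposal is correct and follows essentially the same route as the paper: the paper writes the Lagrangian with multiplier $\lambda$ and derives the KKT condition $c_t + \nabla h(x_t+v) + \lambda x_t = 0$, which is exactly your normal-space characterization $c_t + \nabla h(x_t+v_t) = \mu x_t$ under the reparametrization $\mu = -\lambda$; both then invert $\nabla h$ componentwise and substitute into $x_t^\top v = 0$ to obtain the identical one-dimensional equations (the paper states the equation is strictly decreasing in $\lambda$, matching your strict increase in $\mu$). Your treatment of domain bookkeeping and monotonicity is somewhat more careful than the paper's, but the argument is the same.
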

\begin{proof}
Recall that the tangent space of the sphere at $x_t$ is $\T_{x_t}\mathbb S^{\,n-1}=\{v\in\reals^{n}: x_t^{\top}v=0\}$. Hence the subproblem~\eqref{eq.simplified subproblem} becomes
\begin{align*}
    \min_{v\in\reals^{n}} \quad &\inner{c_t}{v} + h(x_t+v) \\
    \operatorname{s.t.} \quad &x_t^\top v = 0.
\end{align*}
By associating the Lagrange multiplier $\lambda\in\reals$, the Lagrangian function is $\mathcal L(v,\lambda) =\inner{c_t}{v}+h(x_t+v)+\lambda\,x_t^{\top}v$. The KKT conditions are
\[
c_t+\nabla h(x_t+v)+\lambda x_t=0, \quad x_t^{\top}v=0.
\]
For the log-barrier reference function $h(x)= -\sum_{i=1}^n \log x_{i}$, the first KKT condition gives
\(
c_t+\lambda x_t=(x_t+v)^{\odot-1},
\)
where the notation ``$\odot-1$" denotes the element-wise inverse. Thus, $v=(c_t+\lambda x_t)^{\odot-1}-x_t$. Substituting into $x_t^{\top}v=0$ yields the scalar equation
\[
\sum_{i=1}^{n}\frac{x_{t,i}}{c_{t,i}+\lambda x_{t,i}}-1=0,
\]
which is strictly decreasing and therefore has a unique root $\lambda^*$. With this root, the subproblem solution is \(v_t=(c_t+\lambda^* x_t)^{\odot-1}-x_t\). Similarly, for the entropy reference function $h(x) = \sum_{i=1}^n x_{i}\log x_{i}$, the first KKT relation becomes
\(c_t+\log(x_t+v)+\mathbf1_n+\lambda x_t=0\),
where $\exp(\cdot)$ is element-wise exponential, and $\mathbf1_n$ is the all-ones vector in~$\reals^{n}$. Plugging this into $x_t^{\top}v=0$ gives
\[
\sum_{i=1}^{n}x_{t,i}\exp\left(-c_{t,i}-\lambda x_{t,i}-1\right)-1=0.
\]
Again, it is a strictly decreasing function with a unique root $\lambda^*$.  The corresponding direction is
\(v_t=\exp\left(-c_t-\lambda^* x_t-\mathbf1_n\right)-x_t\).
\end{proof}

\section{Projection-Based Riemannian Bregman Gradient Method}\label{section.projection}
As an alternative to the retraction-based approach described above, the classical projection method can also solve problem~\eqref{eq.main} efficiently \citep{hu2024projected,zhang2024nonconvex,ding2024convergence}. For certain special submanifolds, it is possible to directly project onto the manifold; that is, one can easily compute $\P_\M(x + v)$, where $x \in \M$ and $v \in \reals^n$. It is well known that for the Stiefel manifold, the projection can be computed via polar decomposition, and similar easily computable projections exist for the Grassmannian and fixed-rank manifold cases \citep{absil2012projection,ding2024convergence}. The main advantage of the projection-based approach is that the update direction can be computed in the full ambient Euclidean space without being restricted to the tangent space. This simplification removes the tangent-space constraint, thus avoiding constrained subproblems such as \eqref{eq.update nabla} in Algorithm~\ref{alg.relative GD}. Consequently, at each iteration, determining the update direction reduces to solving an Euclidean unconstrained optimization problem.

In this section, we develop an efficient projection-based Bregman gradient method for smooth Riemannian optimization problems (\( g \equiv 0 \) in problem \eqref{eq.main}). At iteration $t$, we solve the following unconstrained subproblem:
\begin{equation}\label{eq.projection subproblem}
            v_t \ = \ \argmin_{v \in \reals^n} \ \inner{\grad f(x_t)}{v} + \gamma_t D_h(x_t + v, x_t).
        \end{equation}
By the first‑order optimality condition of \eqref{eq.projection subproblem}, $v_t = 0$ if and only if $\grad f(x_t) = 0$. Hence $\|v_t\|$ also serves as a valid stationarity measure in the projection-based framework, analogous to the retraction-based case.
\begin{definition}\label{eq.stationary point projection}
   Given accuracy $\epsilon > 0$, we say $x_t$ is an $\epsilon$-approximate Riemannian stationary point of problem \eqref{eq.main} with $g \equiv 0$ whenever $\|v_t\| \le \epsilon$, where $v_t$ is defined in \eqref{eq.projection subproblem}. 
\end{definition}
Since the tangent-space constraint is removed, we decompose $v_t$ into its tangential and normal components: $v_t = v_t^\T + v_t^\N$, where $v_t^\T = \P_{\T_{x_t}\M}(v_t)$ and $v_t^\N = \P_{\N_{x_t}\M}(v_t)$. As $v_t$ may have a large component in the normal space, we introduce a correction normal vector \(u_t \in \N_{x_t}\M\) in the update step. This correction prevents the projection from introducing large deviations due to the normal component. Because $u_t$ is chosen after computing $v_t$, we control its size via $\|u_t\| \le \tau \|v_t\|$ for some parameter $\tau \ge 0$. For example, one may simply select $u_t = -v_t^\N$ at every iteration, and then $\tau = 1$. Finally, we summarize the projection-based Riemannian Bregman gradient method in Algorithm~\ref{alg.projection relative GD}.
\begin{algorithm}[h]
	\caption{Projection-Based Riemannian Bregman Gradient Method}
	\label{alg.projection relative GD}
	\begin{algorithmic}[1] 
        \item \textbf{Input:} initial point $x_0 \in \M$, $\gamma_t \ge L$, $\tau \ge 0$, $\rho \in (0,1)$
        \item \textbf{For} $t = 0, 1, \ldots$ \textbf{do}
        \item \quad \quad Obtain $v_t$ by solving the subproblem \eqref{eq.projection subproblem}
        \item \quad \quad Choose the correction normal vector $u_t \in \N_{x_t}\M$ satisfying $\|u_t\| \le \tau \|v_t\|$
         \item \quad \quad Set the initial stepsize $\alpha_t = 1$
         \item  \quad \quad \textbf{While} $F\left(\P_{\M}(x_t + \alpha_t(v_t + u_t))\right) - F(x_t) \ > \ -\frac{\gamma_t\lambda\alpha_t}{4} \|v_t\|^2$ \textbf{do}
          \item  \quad \quad \quad \quad $\alpha_t := \rho \alpha_t$
          \item  \quad \quad \textbf{ end While}
           \item \quad \quad Update $x_{t+1} = \P_{\M}(x_t + \alpha_t(v_t + u_t))$
	\end{algorithmic}
\end{algorithm}

Before delving into the theoretical analysis of Algorithm~\ref{alg.projection relative GD}, we first provide some properties of the projection operator onto a differentiable submanifold. The following result comes from Lemma 4 in \cite{absil2012projection} and Lemma 5.2 in \cite{ding2024convergence}.
\begin{lemma}\label{lemma.projection radius}
    Let $\M \subseteq \mathbb{R}^n$ be a submanifold of class $C^k$ with $k \geq 2$. Given any $x \in \M$, there exists $\varrho(x)>0$ such that $\mathcal{P}_{\M}(y)$ uniquely exists for all $y \in\mathbb{B}\left(x, \varrho(x)\right)$. Moreover, $\mathcal{P}_{\M}(y)$ is of class $C^{k-1}$ for $y \in\mathbb{B}\left(x, \varrho(x)\right)$, and its differential at $x$ satisfies $\mathrm{D} \mathcal{P}_{\M}(x) = \mathcal{P}_{\T_{x} \M}$. Additionally, for any $y \in \M \cap\mathbb{B}\left(x, \varrho(x)\right)$ and $w \in \N_y\M$ satisfying $y + w \in\mathbb{B}\left(x, \varrho(x)\right)$, we have $\P_\M(y + w) = y$.
\end{lemma}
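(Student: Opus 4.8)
The plan is to realize $\M$ near $x$ as the base of a local tubular neighborhood and read off all four claims from the inverse function theorem. First I would use Definition \ref{def.submanifold}: there is an open neighborhood $\mathcal U_x$ with $\M\cap\mathcal U_x=\phi_x^{-1}(0)$, where $\operatorname{J}\phi_x$ has full rank $n-d$ and is of class $C^{k-1}$ on $\mathcal U_x$. Hence $\N_y\M=\operatorname{range}(\operatorname{J}\phi_x(y)^\top)$ depends $C^{k-1}$-smoothly on $y\in\M\cap\mathcal U_x$, and applying Gram--Schmidt to the columns of $\operatorname{J}\phi_x(y)^\top$ produces a $C^{k-1}$ orthonormal normal frame $y\mapsto N(y)\in\reals^{n\times(n-d)}$ whose columns span $\N_y\M$. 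Define the $C^{k-1}$ map $E(y,s)=y+N(y)s$ on $(\M\cap\mathcal U_x)\times\reals^{n-d}$. A direct computation gives $\operatorname{D}E(x,0)(\xi,\sigma)=\xi+N(x)\sigma$ for $(\xi,\sigma)\in\T_x\M\times\reals^{n-d}$, which is a linear isomorphism onto $\reals^n=\T_x\M\oplus\N_x\M$; so by the inverse function theorem $E$ restricts to a $C^{k-1}$ diffeomorphism from a neighborhood of $(x,0)$ onto a neighborhood of $x$ in $\reals^n$. Writing $E^{-1}(y)=(\pi(y),s(y))$ on that image, $\pi$ is $C^{k-1}$, $\pi(y)\in\M$, and $y-\pi(y)=N(\pi(y))s(y)\in\N_{\pi(y)}\M$.

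Next I would choose $\varrho(x)>0$ so that $\overline{\mathbb{B}}(x,3\varrho(x))\subseteq\mathcal U_x$, so that $\mathbb{B}(x,\varrho(x))$ lies in the image of the above diffeomorphism, and so that the set $\{(y,s):y\in\M\cap\mathbb{B}(x,2\varrho(x)),\ \|s\|<2\varrho(x)\}$ still lies in its domain; this nesting of radii is the only delicate bookkeeping. To identify $\pi(y)$ with $\P_\M(y)$ for $y\in\mathbb{B}(x,\varrho(x))$, I would treat existence and uniqueness separately. For existence: since $\M\cap\mathcal U_x=\phi_x^{-1}(0)$ is closed in $\mathcal U_x$ and $\overline{\mathbb{B}}(y,\|y-x\|)\subseteq\overline{\mathbb{B}}(x,2\varrho(x))\subseteq\mathcal U_x$, the set $\M\cap\overline{\mathbb{B}}(y,\|y-x\|)$ is nonempty (it contains $x$) and compact, so $\operatorname{dist}(y,\cdot)$ attains a minimum $y^\ast$ over $\M$, with $y^\ast\in\mathbb{B}(x,2\varrho(x))$ and $y-y^\ast\in\N_{y^\ast}\M$ by the first-order optimality condition. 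Then $(y^\ast,s^\ast)$ with $N(y^\ast)s^\ast=y-y^\ast$ lies in the diffeomorphism domain and $E(y^\ast,s^\ast)=y$, so injectivity of $E$ forces $y^\ast=\pi(y)$; in particular the minimizer is unique, $\P_\M(y)=\pi(y)$, and $\P_\M$ is $C^{k-1}$ on $\mathbb{B}(x,\varrho(x))$.

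It remains to verify the differential formula and the last identity. Differentiating $E^{-1}(y)=(\P_\M(y),\,y-\P_\M(y))$ at $y=x$ and using $\operatorname{D}E^{-1}(x)=(\operatorname{D}E(x,0))^{-1}$ together with the explicit form of $\operatorname{D}E(x,0)$, the first component of the inverse is precisely the orthogonal projection onto $\T_x\M$, i.e.\ $\operatorname{D}\P_\M(x)=\P_{\T_x\M}$. For the final claim, given $y\in\M\cap\mathbb{B}(x,\varrho(x))$ and $w\in\N_y\M$ with $y+w\in\mathbb{B}(x,\varrho(x))$, write $w=N(y)s$ with $\|s\|=\|w\|\le\|y+w-x\|+\|x-y\|<2\varrho(x)$; then $(y,s)$ lies in the diffeomorphism domain and $E(y,s)=y+w$, so by injectivity and the characterization of $E^{-1}$ we get $\P_\M(y+w)=y$. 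Finally, taking $\varrho(x)$ to be the minimum of the radii required in the three paragraphs yields the stated constant. The main obstacle is exactly the radius bookkeeping in the second paragraph: one must pick a single $\varrho(x)$ making the inverse-function-theorem neighborhood, the local closedness of the embedded submanifold, and the compactness argument for existence of nearest points mutually compatible, and use local closedness to exclude spurious minimizers lying outside the neighborhood.
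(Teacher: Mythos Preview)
Your argument is correct and complete. Note, however, that the paper does not actually prove this lemma: it is stated as a known result citing Lemma~4 in \cite{absil2012projection} and Lemma~5.2 in \cite{ding2024convergence}, with no proof given in the text. What you have written is a self-contained tubular-neighborhood proof via the inverse function theorem, which is essentially the standard argument (and in fact the approach in the cited Absil--Malick reference). So there is no meaningful comparison of strategies to make; you have supplied a proof where the paper simply invokes the literature.

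One minor notational slip: when you write $E^{-1}(y)=(\P_\M(y),\,y-\P_\M(y))$ before differentiating, the second component should be the coordinate vector $s(y)\in\reals^{n-d}$, not the normal vector $y-\P_\M(y)\in\reals^n$ itself. This does not affect the conclusion, since only the first component matters for $\operatorname{D}\P_\M(x)$, and your computation of $(\operatorname{D}E(x,0))^{-1}$ correctly identifies that first component as $\P_{\T_x\M}$.
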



However, the above properties hold only locally, as the projection radius depends on the point $x$. Hence, the projection radius sequence $\{\varrho(x_t)\}_{t \ge 0}$ may converge to zero, where the iterates $x_t$, $t \ge 0$ are generated by Algorithm~\ref{alg.projection relative GD}. To prevent such pathological behavior, we choose the initial point $x_0 = \widetilde{x}$. Therefore, as we will demonstrate later, the iterates generated by Algorithm~\ref{alg.projection relative GD} remain within $\mathcal{L}(\widetilde{x})$ due to the backtracking linesearch. Moreover, for any $x \in \mathcal{L}(\widetilde{x})$, we can derive the following projection inequalities.

\begin{lemma}\label{lemma.projection inequality}
    Suppose Assumption~\ref{assumption} holds. For any $x \in \mathcal{L}(\widetilde{x})$, there exists a constant $\varrho > 0$ such that for any vectors $v \in \T_x\M$, $u \in \N_x\M$ satisfying $\|v + u\| \leq \varrho/2$, we have
    $$
        \begin{aligned}
        \left\|\mathcal{P}_{\M}(x+v+u)-x\right\| & \ \leq \ M_{1}^\P \|v\|, \\
        \left\|\mathcal{P}_{\M}(x+v+u)-x-v\right\| & \ \leq \  M_{2}^\P \|v\|^2+M_{3}^\P\|v\|\|u\|,
        \end{aligned}
    $$
    for some positive constants $M_{1}^\P, M_{2}^\P, M_{3}^\P > 0$.
\end{lemma}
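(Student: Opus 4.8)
The idea is to upgrade the \emph{pointwise} statement of Lemma~\ref{lemma.projection radius} to a \emph{uniform} one over the compact sublevel set $\mathcal{L}(\widetilde{x})$, and then to perform first- and second-order Taylor expansions of the smooth map $\P_\M$ along the segment joining $x+u$ to $x+u+v$, exploiting the two structural facts $\P_\M(x+u)=x$ and $\mathrm{D}\P_\M(x)=\P_{\T_x\M}$ supplied by Lemma~\ref{lemma.projection radius}. Since $\M$ is $C^\infty$ by Assumption~\ref{assumption}, the projection map is $C^\infty$ on a neighborhood of each point of $\M$, so all the derivatives below exist.

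\emph{Uniformization step.} For $x\in\M$ let $\varrho(x)$ be the supremum of the radii $r>0$ for which $\P_\M$ is well defined and $C^\infty$ on $\mathbb{B}(x,r)$; this is positive by Lemma~\ref{lemma.projection radius}, and $\P_\M$ is then $C^\infty$ on the whole open ball $\mathbb{B}(x,\varrho(x))$. The inclusion $\mathbb{B}(x',r)\subseteq\mathbb{B}(x,\varrho(x))$ whenever $r<\varrho(x)-\|x-x'\|$ shows that $\varrho(\cdot)$ is $1$-Lipschitz, hence continuous, so $\varrho_0:=\inf_{x\in\mathcal{L}(\widetilde{x})}\varrho(x)>0$ by compactness of $\mathcal{L}(\widetilde{x})$. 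The set $K:=\{y\in\reals^n:\dist(y,\mathcal{L}(\widetilde{x}))\le\varrho_0/2\}$ is compact and contained in the open domain of $\P_\M$, so $M_1^\P:=\sup_{y\in K}\|\mathrm{D}\P_\M(y)\|$ and $C_2:=\sup_{y\in K}\|\mathrm{D}^2\P_\M(y)\|$ are finite. Set $\varrho:=\varrho_0$.

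\emph{Main estimates.} Fix $x\in\mathcal{L}(\widetilde{x})$, $v\in\T_x\M$, $u\in\N_x\M$ with $\|v+u\|\le\varrho/2$. Orthogonality of $\T_x\M$ and $\N_x\M$ gives $\|v+u\|^2=\|v\|^2+\|u\|^2$, hence $\|v\|,\|u\|\le\|v+u\|\le\varrho_0/2$; in particular $\P_\M(x+u)=x$ by Lemma~\ref{lemma.projection radius}, and the points $x+su$ and $x+u+tv$ ($s,t\in[0,1]$) all lie in $\mathbb{B}(x,\varrho_0/2)\subseteq K$ since $\|su\|\le\|u\|$ and $\|u+tv\|=\sqrt{\|u\|^2+t^2\|v\|^2}\le\|u+v\|$. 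Now let $g(t):=\P_\M(x+u+tv)$, so $g(0)=x$, $g(1)=\P_\M(x+v+u)$, $g'(t)=\mathrm{D}\P_\M(x+u+tv)[v]$, $g''(t)=\mathrm{D}^2\P_\M(x+u+tv)[v,v]$, with $\|g'(t)\|\le M_1^\P\|v\|$ and $\|g''(t)\|\le C_2\|v\|^2$. The first inequality is then $\|\P_\M(x+v+u)-x\|=\|g(1)-g(0)\|\le\int_0^1\|g'(t)\|\,\mathrm{d}t\le M_1^\P\|v\|$. For the second, Taylor's formula with integral remainder gives $g(1)=g(0)+g'(0)+\int_0^1(1-t)g''(t)\,\mathrm{d}t$, so $\|g(1)-x-g'(0)\|\le\tfrac12 C_2\|v\|^2$; and since $\mathrm{D}\P_\M(x)[v]=\P_{\T_x\M}(v)=v$, the mean value inequality along $[x,x+u]\subseteq K$ yields $\|g'(0)-v\|=\|(\mathrm{D}\P_\M(x+u)-\mathrm{D}\P_\M(x))[v]\|\le C_2\|u\|\|v\|$. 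Adding the two gives $\|\P_\M(x+v+u)-x-v\|\le\tfrac12 C_2\|v\|^2+C_2\|u\|\|v\|$, i.e. the claim with $M_1^\P$ as above, $M_2^\P=C_2/2$, $M_3^\P=C_2$.

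\emph{Main obstacle.} The only delicate point is the uniformization step: producing a single radius $\varrho$ and uniform bounds on $\mathrm{D}\P_\M$, $\mathrm{D}^2\P_\M$ valid for every $x\in\mathcal{L}(\widetilde{x})$, which is why compactness of the sublevel set (Assumption~\ref{assumption}) and the Lipschitz continuity of $x\mapsto\varrho(x)$ are essential. Once these are in place, everything reduces to routine Taylor bookkeeping anchored at the auxiliary point $x+u$, where the normal correction is "absorbed" by the identity $\P_\M(x+u)=x$.
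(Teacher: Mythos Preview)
Your proof is correct and follows the paper's strategy: uniformize the projection radius over the compact sublevel set, anchor the expansion at $x+u$ via $\P_\M(x+u)=x$ and $\mathrm{D}\P_\M(x)=\P_{\T_x\M}$, then Taylor-expand; the paper obtains the uniform radius by a finite subcover and works with local Lipschitz constants of $\P_\M$ and $\mathrm{D}\P_\M$, whereas you use the $1$-Lipschitz continuity of $x\mapsto\varrho(x)$ and sup-norms of $\mathrm{D}\P_\M$, $\mathrm{D}^2\P_\M$ over a compact neighborhood $K$, but these are cosmetic differences (your uniformization is arguably cleaner). One small wrinkle: you redefine $\varrho(x)$ as the supremal radius where $\P_\M$ is single-valued and smooth, and then cite Lemma~\ref{lemma.projection radius} for $\P_\M(x+u)=x$; but that lemma only guarantees the normal-space identity within its own (possibly smaller) radius, so strictly speaking you should fold the normal-space property into your definition of $\varrho(x)$---the $1$-Lipschitz argument survives verbatim since that property is inherited by sub-balls.
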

\begin{proof}
   First notice that $\mathcal{L}(\widetilde{x}) \subseteq \cup_{z \in \mathcal{L}(\widetilde{x})} \mathbb{B}(z, \varrho(z) / 2)$, where $\varrho(z)$ is given in Lemma \ref{lemma.projection radius}. Because \(\mathcal{L}(\widetilde{x})\) is compact, by Lebesgue covering lemma, the open cover admits a finite sub‑cover: there exist finite points \(z_1,\dots,z_m\in \mathcal{L}(\widetilde{x})\) such
    that $\mathcal{L}(\widetilde{x}) \subseteq \cup_{i=1}^{m} \mathbb B \left(z_i,\varrho(z_i) / 2\right)$. Define $\varrho \triangleq \min_{1\le i\le m}\varrho(z_i) / 2 > 0$. Then given $x \in \mathcal{L}(\widetilde{x})$, there exists a sub-cover such that $x \in \mathbb{B}\left(z_i,\varrho(z_i) / 2\right)$ for some $z_i$. Hence, for any $y \in \mathbb{B}(x,\varrho)$, $\|y - z_i\| \le \|y - x\| + \|x - z_i\| \le \varrho + \varrho(z_i) / 2 \le \varrho(z_i)$, which says $y \in \mathbb{B}(z_i, \varrho(z_i))$. Consequently, by Lemma \ref{lemma.projection radius}, $\mathcal{P}_{\M}(y)$ uniquely exists and is of class $C^\infty$. Its differential at $x$ satisfies $\mathrm{D} \mathcal{P}_{\M}(x) = \mathcal{P}_{\T_{x} \M}$. Then both $\P_\M(\cdot)$ and $\mathrm{D} \mathcal{P}_{\M}(\cdot)$ are Lipschitz continuous on $\mathbb{B}(x, \varrho)$, with Lipschitz constants $L_{\P_\M}(x)$ and $L_{\mathrm{D}\P_\M}(x)$, respectively.  Besides, for any $w \in \N_x\M$ satisfying $\|w\| \le \varrho$, we also have $\|x + w - z_i\| \le \|w\| + \|x - z_i\| \le \varrho(z_i)$. It follows $\P_\M(x + w) = x$. 
    
   Now we prove two inequalities. Since $\|v + u\| \le \varrho / 2$, then $x + v + u \in \mathbb{B}(x, \varrho)$ and $\P_\M(x + u) = x$. It holds that
   \begin{align*}
     \left\|\mathcal{P}_{\M}(x+v+u) - x\right\| \ = \ \left\|\mathcal{P}_{\M}(x+v+u) - \P_\M(x + u)\right\| \ \le \ L_{\P_\M}(x) \|v\| \ \le \ M_{1}^\P \|v\|,
   \end{align*}
   which gives the first inequality with \(M_{1}^\P \triangleq \max_{x \in \mathcal{L}(\widetilde{x})}L_{\mathcal{P}_{\M}}(x)\). As for the second inequality, consider the first-order Taylor expansion of \(\mathcal{P}_{\M}\) at \(x + u\). It follows
   \begin{align*}
       \left\|\mathcal{P}_{\M}(x+u +v) - \P_\M(x + u) - \mathrm{D} \mathcal{P}_{\M}(x + u)[v] \right\| \ \le \ \frac{L_{\mathrm{D}\P_\M}(x)}{2}\|v\|^2.
   \end{align*}
    Besides, since $\mathrm{D} \mathcal{P}_{\M}(x) = \mathcal{P}_{\T_{x} \M}$, we have $\mathrm{D} \mathcal{P}_{\M}(x)[v] = \mathcal{P}_{\T_{x} \M}(v) = v$. Using the Lipschitz continuity of \(\mathrm{D} \mathcal{P}_{\M}\), it yields
    \begin{align*}
        \|\mathrm{D} \mathcal{P}_{\M}(x + u)[v] - v\| \ = \ \|\mathrm{D} \mathcal{P}_{\M}(x + u)[v] - \mathrm{D} \mathcal{P}_{\M}(x)[v]\| \ \le L_{\mathrm{D}\P_\M}(x) \|v\|\|u\|.
        \end{align*}
   Recalling that \(\mathcal{P}_{\M}(x + u) = x\). Let $M_{2}^\P \triangleq \max_{x \in \mathcal{L}(\widetilde{x})}L_{\mathrm{D}\P_\M}(x) / 2$, and $M_{3}^\P \triangleq 2M_{2}^\P$. By combining the above two equations, we conclude
    \begin{align*}
       \left\|\mathcal{P}_{\M}(x+u +v) - x - v \right\| \ \le \ M_{2}^\P\|v\|^2 + M_{3}^\P \|v\|\|u\|.
   \end{align*}
   The proof is completed.
\end{proof}
\begin{remark}\label{remark.compare with ding}
    The above result follows from Lemma 5.10 in \cite{ding2024convergence} (We restate it in the Appendix), which was originally proved for compact submanifolds. On a compact submanifold, one can always guarantee that the projection $\P_\M(x + v + u)$ remains on the manifold, so only a bound on the normal component is needed. In our setting, however, we work on a compact subset $\mathcal{L}(\widetilde{x})$ of a (potentially non-compact) differentiable submanifold, and $\P_\M(x + v + u)$ may not remain in $\mathcal{L}(\widetilde{x})$. Therefore, in order to prove the projection inequalities, we must control the magnitude of both tangent and normal vectors. 
\end{remark}

Since the update direction in Algorithm~\ref{alg.projection relative GD} generally contains components in the normal space, it is necessary to control the magnitude of its normal component. The following lemma proves that $\|\P_{\N_x\M}(x - y)\| = \mathcal{O}(\|x-y\|^2)$. 

\begin{lemma}\label{lemma.bound projection normal space}
    Suppose Assumption~\ref{assumption} holds. For any $x \in \mathcal{L}(\widetilde{x})$, we have $\|\P_{\N_x\M}(x - y)\| \le M_4^\P\|x - y\|^2$ for some $M_4^\P > 0$.
\end{lemma}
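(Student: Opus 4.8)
The plan is to reuse the projection-operator estimates already established in Lemma~\ref{lemma.projection radius} and Lemma~\ref{lemma.projection inequality}, rather than arguing from the local defining functions. The key observation is that for $y\in\M$ close to $x$ one has $\mathcal{P}_{\M}(y)=y$, so a first-order Taylor expansion of $\mathcal{P}_{\M}$ around $x$ turns the normal component of $x-y$ into a quadratic remainder term; the factor $\|x-y\|^2$ in the statement is precisely this remainder.

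Concretely, I would fix $x\in\mathcal{L}(\widetilde{x})$ and take $\varrho>0$ and $M_2^\P>0$ to be the uniform constants produced in the proof of Lemma~\ref{lemma.projection inequality}, so that $\mathcal{P}_{\M}$ is $C^\infty$ on $\mathbb{B}(x,\varrho)$ with $\mathcal{P}_{\M}(x)=x$, $\mathrm{D}\mathcal{P}_{\M}(x)=\mathcal{P}_{\T_x\M}$, and $\mathrm{D}\mathcal{P}_{\M}$ Lipschitz on $\mathbb{B}(x,\varrho)$ with constant at most $2M_2^\P$. Then I would split on $\|x-y\|$. If $\|x-y\|<\varrho$, then $y\in\M\cap\mathbb{B}(x,\varrho)$, so $\mathcal{P}_{\M}(y)=y$ by Lemma~\ref{lemma.projection radius}; applying the integral-remainder form of Taylor's theorem to $\mathcal{P}_{\M}$ along the segment $[x,y]\subseteq\mathbb{B}(x,\varrho)$ gives
\[
    y=\mathcal{P}_{\M}(y)=x+\mathcal{P}_{\T_x\M}(y-x)+R,\qquad \|R\|\le M_2^\P\|x-y\|^2 ,
\]
so that $R=(y-x)-\mathcal{P}_{\T_x\M}(y-x)=\mathcal{P}_{\N_x\M}(y-x)=-\mathcal{P}_{\N_x\M}(x-y)$, hence $\|\mathcal{P}_{\N_x\M}(x-y)\|\le M_2^\P\|x-y\|^2$. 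If instead $\|x-y\|\ge\varrho$, then trivially $\|\mathcal{P}_{\N_x\M}(x-y)\|\le\|x-y\|\le\varrho^{-1}\|x-y\|^2$. Setting $M_4^\P\triangleq\max\{M_2^\P,\varrho^{-1}\}$, which does not depend on $x$, covers both cases.

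The only place where anything delicate happens — and hence the main obstacle — is uniformity: the radius $\varrho$ and the Lipschitz constant of $\mathrm{D}\mathcal{P}_{\M}$ (and thus $M_4^\P$) must not degenerate along the iterates, even though $\M$ may be non-compact. This is exactly what the finite-subcover argument over the compact sublevel set $\mathcal{L}(\widetilde{x})$ in Lemma~\ref{lemma.projection inequality} already arranges, so here it suffices to inherit those constants. If one prefers an argument not invoking $\mathcal{P}_{\M}$, an alternative is to use the local defining map $\phi_x$ of Definition~\ref{def.submanifold}: from $\phi_x(x)=\phi_x(y)=0$ a second-order Taylor expansion gives $\|\operatorname{J}\phi_x(x)(x-y)\|\le C\|x-y\|^2$, and since $\N_x\M$ is the row space of the full-rank matrix $\operatorname{J}\phi_x(x)$ one obtains $\|\mathcal{P}_{\N_x\M}(x-y)\|\le\sigma_{\min}(\operatorname{J}\phi_x(x))^{-1}\|\operatorname{J}\phi_x(x)(x-y)\|$; a finite subcover of $\mathcal{L}(\widetilde{x})$ by chart neighborhoods with compact closures then makes the Hessian bound $C$ and the lower bound on $\sigma_{\min}$ uniform. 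Both routes use $y\in\M$, which holds in the intended application with $y=x_{t+1}$.
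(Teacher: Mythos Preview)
Your proposal is correct, and your primary route is genuinely different from the paper's. The paper argues via the local defining maps $\phi_{z_i}$ of Definition~\ref{def.submanifold}: after a finite subcover of $\mathcal{L}(\widetilde{x})$ by chart balls $\overline{\mathbb{B}}(z_i,\chi(z_i))$, it uses $\phi_{z_i}(x)=\phi_{z_i}(y)=0$ to write $\|\operatorname{J}\phi_{z_i}(x)(y-x)\|=\|\phi_{z_i}(y)-\phi_{z_i}(x)-\operatorname{J}\phi_{z_i}(x)(y-x)\|\le\tfrac{1}{2}L_{\operatorname{J}\phi_{z_i}}\|x-y\|^2$, and then multiplies by the pseudoinverse bound $\|\operatorname{J}\phi_{z_i}(x)^\top(\operatorname{J}\phi_{z_i}(x)\operatorname{J}\phi_{z_i}(x)^\top)^{-1}\|$ to recover the normal projection; the far case $\|x-y\|>\chi$ is handled by the same trivial inequality you use. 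This is exactly the ``alternative'' you sketch at the end. Your main argument instead Taylor-expands the smooth projection $\mathcal{P}_\M$ at $x$ and reads off $\mathcal{P}_{\N_x\M}(y-x)$ as the quadratic remainder via $\mathcal{P}_\M(y)=y$. The advantage of your route is economy: the radius $\varrho$ and the Lipschitz constant $2M_2^\P$ of $\mathrm{D}\mathcal{P}_\M$ have already been made uniform over $\mathcal{L}(\widetilde{x})$ in the proof of Lemma~\ref{lemma.projection inequality}, so no new compactness argument is needed and you obtain the explicit constant $M_4^\P=\max\{M_2^\P,\varrho^{-1}\}$. The paper's chart-based argument is more self-contained (it does not rely on regularity of $\mathcal{P}_\M$) and makes the dependence on the local geometry of $\M$ explicit through $\operatorname{J}\phi$, at the cost of a second finite-subcover construction. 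Both approaches require $y\in\M$, which as you note is the only regime used downstream.
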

\begin{proof}
    We first argue that there exists a constant $\chi > 0$ such that for any $y \in \overline{\mathbb{B}}(x, \chi) \cap \M$, $\|\P_{\N_x\M}(x - y)\| = \mathcal{O}(\|x-y\|^2)$ holds. By Definition \ref{def.submanifold} and Corollary \ref{coro.tangent space charactirization}, for any $z \in \mathcal{L}(\widetilde{x})$, there exists an open neighborhood $\mathcal{U}_z$ of $z$ and a $C^k$ map $\phi_z: \mathcal{U}_z \to \reals^{n-d}$ such that $\mathcal{U}_z \cap \mathcal{M} = \{y\in\mathcal{U}_z : \phi_z(y)=0\}$. Besides, for any $y\in \mathcal{U}_z\cap\mathcal{M}$, we have $\operatorname{rank}(\operatorname{J}\phi_z(y))=n-d$, and $\T_y\M = \operatorname{ker}(\operatorname{J}\phi_z(y))$. Since $\mathcal{U}_z$ is open, we can choose a ball centered at $z$ with radius $\chi(z) > 0$ such that $\overline{\mathbb{B}}(z, \chi(z)) \subseteq \mathcal{U}_z$. Clearly, $\cup_{z \in  \mathcal{L}(\widetilde{x})} \mathbb{B}(z, \chi(z)/2)$ forms an open cover of $\mathcal{L}(\widetilde{x})$. Since $\mathcal{L}(\widetilde{x})$ is compact, then there exists a finite sub-cover such that $\mathcal{L}(\widetilde{x}) \subseteq \cup_{i=1}^m \mathbb{B}(z_i, \chi(z_i)/2)$. 
    
    Now we choose $\chi \triangleq \min_{i=1, \ldots, m} \chi(z_i)/2$. Hence for any $x \in \mathcal{L}(\widetilde{x})$ and $y \in \overline{\mathbb{B}}(x, \chi) \cap \M$, there exists some $i$ such that $x,y \in \overline{\mathbb{B}}(z_i, \chi(z_i)) \subseteq \mathcal{U}_{z_i}$. Consequently, $\phi_{z_i}(y) = \phi_{z_i}(x) = 0$, and $\T_x\M = \operatorname{ker}(\operatorname{J} \phi_{z_i}(x))$. Then it holds that $ \P_{\N_{x}\M}(y - x) = \operatorname{J}\phi_{z_i}(x)^\top (\operatorname{J}\phi_{z_i}(x) \operatorname{J}\phi_{z_i}(x)^\top)^{-1}\operatorname{J}\phi_{z_i}(x) (y - x)$. Let $L_{\operatorname{J}\phi_{z_i}}$ be the Lipschitz constant of $\operatorname{J}\phi_{z_i}(\cdot)$ on the closed ball $\overline{\mathbb{B}}(z_i, \chi(z_i))$. We obtain
    \begin{align*}
        \|\P_{\N_{x}\M}(y - x)\| \ 
        = \ &\|\operatorname{J}\phi_{z_i}(x)^\top (\operatorname{J}\phi_{z_i}(x) \operatorname{J}\phi_{z_i}(x)^\top)^{-1}\operatorname{J}\phi_{z_i}(x) (y - x)\| \\
        \le \ &\|\operatorname{J}\phi_{z_i}(x)^\top (\operatorname{J}\phi_{z_i}(x) \operatorname{J}\phi_{z_i}(x)^\top)^{-1}\| \cdot \|\operatorname{J}\phi_{z_i}(x) (y-x)\| \\
        = \ &\|\operatorname{J}\phi_{z_i}(x)^\top (\operatorname{J}\phi_{z_i}(x) \operatorname{J}\phi_{z_i}(x)^\top)^{-1}\| \cdot \|\phi_{z_i}(y) - \phi_{z_i}(x) - \operatorname{J}\phi_{z_i}(x) (y - x)\| \\
        \le \ &\|\operatorname{J}\phi_{z_i}(x)^\top (\operatorname{J}\phi_{z_i}(x) \operatorname{J}\phi_{z_i}(x)^\top)^{-1}\| \cdot \frac{L_{\operatorname{J}\phi_{z_i}}}{2}\|y - x\|^2.
    \end{align*}
    By choosing $M_{\operatorname{J} \phi} \triangleq \max_{i = 1, \ldots, m}  L_{\operatorname{J}\phi_{z_i}} \max_{x \in \overline{\mathbb{B}}(z_i, \chi(z_i))}\|\operatorname{J}\phi_{z_i}(x)^\top (\operatorname{J}\phi_{z_i}(x) \operatorname{J}\phi_{z_i}(x)^\top)^{-1}\|$, the inequality $\|\P_{\N_x\M}(x - y)\| = \mathcal{O}(\|x-y\|^2)$ holds. For those $y \in \M$ satisfying $\|y - x\| > \chi$, it follows
    \begin{align*}
       \|\P_{\N_x\M}(y - x)\| \ \le \ \|x - y\| \ \le \ \frac{\|x - y\|^2}{\chi}.
    \end{align*}
    Set $M_4^\P = \max\{M_{\operatorname{J} \phi}, 1 / \chi\}$. The proof is completed.  
\end{proof}

Compared with Algorithm~\ref{alg.relative GD}, here we need an additional assumption to control growth of the gradient of the reference function $h$ due to the existence of normal vectors. 
\begin{assumption}\label{assumption.correction u}
    The reference function $h$ is twice continuously differentiable.
\end{assumption}

The above requirement is naturally satisfied by many widely-used reference functions, such as $h(x) = \frac{1}{4}\|x\|^4 + \frac{1}{2}\|x\|^2$ and $h(x) = -\sum_{i=1}^n \log(x_i)$. Now we move to the theoretical analysis of Algorithm~\ref{alg.projection relative GD}. In the following analysis, we assume the update direction $v_t \neq 0$ at iteration $t$; otherwise, $x_t$ is already a stationary point. Suppose the current iterate $x_t \in \mathcal{L}(\widetilde{x})$. By Lemma \ref{lemma.projection inequality}, if we choose the stepsize such that \(\alpha_t \le \min\{1, \varrho / (2\|v_t + u_t\|)\}\), we have $\|x_{t+1} - x_t\| = \|\P_{\M}(x_t + \alpha_t(v_t + u_t)) - x_t\| \le \alpha_t M_{1}^\P\|v_t^\T\| \le M_{1}^\P\varrho$. Therefore, we define the maximum gradient norm of the reference function over the ball of radius $M_{1}^\P\varrho$ around $x_t$ as $G_h(x_t, M_{1}^\P\varrho) \triangleq \max_{x \in \overline{\mathbb{B}}(x_t, M_{1}^\P\varrho)} \|\nabla h(x)\|$. Correspondingly, for analysis purposes, we define the maximum Hessian norm of the reference function over the ball $\overline{\mathbb{B}}(x_t, M_{1}^\P\varrho)$ as $H_h(x_t, M_{1}^\P\varrho) \triangleq \max_{x \in \overline{\mathbb{B}}(x_t, M_{1}^\P\varrho)} \|\nabla^2 h(x)\|$. Then, under these assumptions, we can rigorously establish that the deviation between the Bregman distances evaluated at \(v_t\) and \(v_t + u_t\) in the update step is bounded.

\begin{lemma}\label{lemma.projection Bregman deviation}
    Suppose Assumptions \ref{assumption} and \ref{assumption.correction u} hold. Fix an iterate $x_t \in \mathcal{L}(\widetilde{x})$. For any \(\alpha_t \le \min\{1, \varrho / (2\|v_t + u_t\|)\}\), it holds that 
    \begin{align*}
       D_h(x_{t+1}, x_t) - D_h(x_t + \alpha_t v_t, x_t) \
       \le \ \Psi_1(x_t) \|\alpha_t v_t\|^2, 
    \end{align*}
    where $\Psi_1(x_t) \triangleq 2G_h(x_t, M_{1}^\P\varrho) (M_{2}^\P + M_{3}^\P)(1 + \tau) + H_h(x_t, M_{1}^\P\varrho)$.
\end{lemma}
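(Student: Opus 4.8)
The plan is to split the deviation $D_h(x_{t+1},x_t) - D_h(x_t + \alpha_t v_t, x_t)$ into two pieces by inserting the intermediate point $x_t + \alpha_t(v_t + u_t)$: write $D_h(x_{t+1},x_t) - D_h(x_t + \alpha_t v_t, x_t) = \big[D_h(x_{t+1},x_t) - D_h(x_t + \alpha_t(v_t + u_t), x_t)\big] + \big[D_h(x_t + \alpha_t(v_t+u_t), x_t) - D_h(x_t + \alpha_t v_t, x_t)\big]$. For the first bracket, the argument is essentially that of Lemma~\ref{lemma.bound bregman deviation}, but using the projection inequalities of Lemma~\ref{lemma.projection inequality} in place of the retraction inequality. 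Since $x_{t+1} = \P_\M(x_t + \alpha_t(v_t+u_t))$ with $\alpha_t v_t \in \T_{x_t}\M$ and $\alpha_t u_t \in \N_{x_t}\M$, Lemma~\ref{lemma.projection inequality} gives $\|x_{t+1} - x_t\| \le \alpha_t M_1^\P\|v_t\| \le M_1^\P\varrho$ (so $x_{t+1}$ lies in the ball on which $G_h(x_t, M_1^\P\varrho)$ is the relevant gradient bound) and $\|x_{t+1} - (x_t + \alpha_t v_t)\| \le \alpha_t^2(M_2^\P\|v_t\|^2 + M_3^\P\|v_t\|\|u_t\|) \le \alpha_t^2(M_2^\P + M_3^\P)(1+\tau)\|v_t\|^2$, where the last step uses $\|u_t\| \le \tau\|v_t\|$ and $\|v_t\|\le\|v_t\|$ bookkeeping. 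Then, exactly as in Lemma~\ref{lemma.bound bregman deviation},
$$
D_h(x_{t+1},x_t) - D_h(x_t + \alpha_t(v_t+u_t),x_t) = h(x_{t+1}) - h(x_t + \alpha_t(v_t+u_t)) - \inner{\nabla h(x_t)}{x_{t+1} - x_t - \alpha_t(v_t+u_t)}
$$
can be bounded by $\big(\|\nabla h(x_{t+1})\| + \|\nabla h(x_t)\|\big)\|x_{t+1} - x_t - \alpha_t(v_t+u_t)\| \le 2G_h(x_t, M_1^\P\varrho)(M_2^\P+M_3^\P)(1+\tau)\|\alpha_t v_t\|^2$, using convexity of $h$ to pass from the $h$-difference to a gradient-times-displacement term and then Cauchy--Schwarz.

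For the second bracket, I would expand directly: $D_h(x_t + \alpha_t(v_t+u_t),x_t) - D_h(x_t + \alpha_t v_t, x_t) = h(x_t + \alpha_t v_t + \alpha_t u_t) - h(x_t + \alpha_t v_t) - \alpha_t\inner{\nabla h(x_t)}{u_t}$. Applying the (twice-differentiable) mean-value/Taylor bound for $h$ along the segment from $x_t + \alpha_t v_t$ to $x_t + \alpha_t v_t + \alpha_t u_t$ — this is where Assumption~\ref{assumption.correction u} enters — gives $h(x_t+\alpha_t v_t+\alpha_t u_t) - h(x_t + \alpha_t v_t) = \inner{\nabla h(x_t + \alpha_t v_t)}{\alpha_t u_t} + O(H_h \|\alpha_t u_t\|^2)$; combining with the remaining $-\alpha_t\inner{\nabla h(x_t)}{u_t}$ term, I'd write $\inner{\nabla h(x_t + \alpha_t v_t) - \nabla h(x_t)}{\alpha_t u_t}$, bound it by $H_h(x_t, M_1^\P\varrho)\|\alpha_t v_t\|\,\|\alpha_t u_t\|$ via the Hessian bound, and then use $\|u_t\|\le\tau\|v_t\|$ together with $\|\alpha_t v_t\|\le M_1^\P\varrho$-type control to absorb everything into a constant times $\|\alpha_t v_t\|^2$. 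Collecting the $H_h$ contributions from both the curvature term and the gradient-difference term and being slightly generous with constants yields the stated coefficient $H_h(x_t, M_1^\P\varrho)$ on the $\|\alpha_t v_t\|^2$ term (one may need $1+\tau+\tau^2 \le (1+\tau)^2$ type simplifications, or simply a coarser bound, to land exactly on the clean form in the statement).

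Adding the two brackets gives $D_h(x_{t+1},x_t) - D_h(x_t + \alpha_t v_t, x_t) \le \big[2G_h(x_t,M_1^\P\varrho)(M_2^\P+M_3^\P)(1+\tau) + H_h(x_t,M_1^\P\varrho)\big]\|\alpha_t v_t\|^2 = \Psi_1(x_t)\|\alpha_t v_t\|^2$, as claimed. The main obstacle I anticipate is not any single inequality but the bookkeeping needed to make the normal-correction terms collapse cleanly: the projection inequality produces a cross term $M_3^\P\|v_t\|\|u_t\|$ and the Bregman second-bracket produces a separate $H_h\|v_t\|\|u_t\|$ and possibly $\|u_t\|^2$ contribution, and one has to use $\|u_t\|\le\tau\|v_t\|$ and the $\varrho$-ball size bound on $\|\alpha_t v_t\|$ judiciously (and perhaps absorb a stray $\tau^2$ or a factor of $\alpha_t\le 1$) to match the exact constant $\Psi_1(x_t)$ in the statement; a secondary subtlety is confirming that the ball radius $M_1^\P\varrho$ in the definitions of $G_h$ and $H_h$ is genuinely large enough to contain every point along the relevant segments (i.e., $x_t + \alpha_t v_t$, $x_t + \alpha_t(v_t+u_t)$, and $x_{t+1}$), which follows from $\alpha_t \le \varrho/(2\|v_t+u_t\|)$ and Lemma~\ref{lemma.projection inequality} but should be checked explicitly.
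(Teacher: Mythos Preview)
Your proposal contains a genuine gap: you assume ``$\alpha_t v_t \in \T_{x_t}\M$,'' but in the projection-based method the direction $v_t$ is the minimizer of the \emph{unconstrained} subproblem \eqref{eq.projection subproblem}, so in general $v_t \notin \T_{x_t}\M$; it decomposes as $v_t = v_t^\T + v_t^\N$ with $v_t^\N \neq 0$. This breaks your argument in two places. First, Lemma~\ref{lemma.projection inequality} cannot be applied with the pair $(\alpha_t v_t, \alpha_t u_t)$, since the lemma requires the first argument to be tangent; the correct split is $(\alpha_t v_t^\T,\ \alpha_t(v_t^\N + u_t))$, and it only yields a bound on $\|x_{t+1} - x_t - \alpha_t v_t^\T\|$, not on $\|x_{t+1} - x_t - \alpha_t v_t\|$ or $\|x_{t+1} - x_t - \alpha_t(v_t+u_t)\|$. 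Second, and more fatally, your chosen intermediate point $x_t + \alpha_t(v_t+u_t)$ forces you to control $\|x_{t+1} - (x_t + \alpha_t(v_t+u_t))\|$ in the first bracket. But $x_t + \alpha_t(v_t+u_t)$ sits at normal distance $\sim\alpha_t\|v_t^\N + u_t\|$ from $\M$, so its distance to its projection $x_{t+1}$ is of order $\alpha_t\|v_t\|$, \emph{not} $\alpha_t^2\|v_t\|^2$. Multiplying by $2G_h$ therefore gives an $O(\alpha_t\|v_t\|)$ term that cannot be absorbed into $\Psi_1(x_t)\|\alpha_t v_t\|^2$.

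The paper avoids this by inserting the intermediate point $x_t + \alpha_t v_t^\T$ instead. Then the first bracket $D_h(x_{t+1},x_t) - D_h(x_t + \alpha_t v_t^\T, x_t)$ is controlled by $2G_h\cdot\|x_{t+1} - x_t - \alpha_t v_t^\T\|$, which Lemma~\ref{lemma.projection inequality} bounds by $(M_2^\P + M_3^\P)(1+\tau)\|\alpha_t v_t\|^2$ after using $\|v_t^\T\|\le\|v_t\|$ and $\|v_t^\N+u_t\|\le(1+\tau)\|v_t\|$. The second bracket $D_h(x_t + \alpha_t v_t^\T, x_t) - D_h(x_t + \alpha_t v_t, x_t)$ is handled by convexity of $h$ followed by the Hessian bound along the segment from $x_t$ to $x_t + \alpha_t v_t^\T$, giving exactly $H_h(x_t, M_1^\P\varrho)\|\alpha_t v_t\|^2$ with no stray $\tau$ or $\tau^2$ factors. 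The choice of $v_t^\T$ rather than $v_t$ or $v_t+u_t$ as the pivot is the key idea you are missing.
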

\begin{proof}
  First notice that
    \begin{align*}
        &D_h(x_{t+1}, x_t) - D_h(x_t + \alpha_t v_t, x_t) \\
        = \ &D_h(x_{t+1}, x_t) - D_h(x_t + \alpha_t v_t^\T, x_t) + D_h(x_t + \alpha_t v_t^\T, x_t) - D_h(x_t + \alpha_t v_t, x_t),
    \end{align*}
    where we use the Bregman divergence between $x_t$ and $x_t + \alpha_t v_t^\T$ as an intermediate term. On the one hand, due to the convexity of $h$, we have
    \begin{align*}
      &D_h(x_{t+1}, x_t) - D_h(x_t + \alpha_t v_t^\T, x_t) \\
      = \ &h(x_{t+1}) - h(x_t + \alpha_t v_t^\T) - \inner{\nabla h(x_t)}{x_{t+1} - x_t - \alpha_t v_t^\T} \\
      \le \ &\inner{\nabla h(x_{t+1}) - \nabla h(x_t)}{x_{t+1} - x_t - \alpha_t v_t^\T} \\
      \le \ &\|\nabla h(x_{t+1}) - \nabla h(x_t)\| \cdot \|\P_{\M}(x_t + \alpha_t (v_t + u_t)) - x_t - \alpha_t v_t^\T\| \\
      \le \ &2G_h(x_t, M_{1}^\P\varrho) \|\P_{\M}(x_t + \alpha_t (v_t + u_t)) - x_t - \alpha_t v_t^\T\|. 
    \end{align*}
    Since the stepsize $\alpha_t$ satisfies $\alpha_t \le \varrho / (2\|v_t + u_t\|)$, using the second inequality in Lemma~\ref{lemma.projection inequality} implies
    \begin{align*}
       \|\P_{\M}(x_t + \alpha_t (v_t + u_t)) - x_t - \alpha_t v_t^\T\| \ \le \ M_{2}^\P\|\alpha_t v_t^\T\|^2 + M_{3}^\P\|\alpha_t v_t^\T\|\|\alpha_t (v_t^\N + u_t)\|.
    \end{align*}
    Note that $\|v_t^\T\| \le \|v_t\|$, $\|v_t^\N + u_t\| \le \|v_t\| + \|u_t\|$, and $\|u_t\| \le \tau\|v_t\|$. Hence, we obtain $D_h(x_{t+1}, x_t) - D_h(x_t + \alpha_t v_t^\T, x_t) \le 2 G_h(x_t, M_{1}^\P\varrho) (M_{2}^\P + M_{3}^\P)(1 + \tau) \|\alpha_t v_t\|^2$.
    On the other hand, we also have
    \begin{align*}
       &D_h(x_t + \alpha_t v_t^\T, x_t) - D_h(x_t + \alpha_t v_t, x_t) \\
      = \ &h(x_t + \alpha_t v_t^\T) - h(x_t + \alpha_t v_t) - \alpha_t\inner{\nabla h(x_t)}{v_t^\T - v_t} \\
      \le \ &\alpha_t\inner{\nabla h(x_t + \alpha_t v_t^\T) - \nabla h(x_t)}{-v_t^\N}  \\
      \le \ &\alpha_t\|\nabla h(x_t + \alpha_t v_t^\T) - \nabla h(x_t)\| \cdot \|v_t\|.
    \end{align*}
    Since $h$ is twice continuously differentiable, Newton-Leibniz formula yields that
    \begin{align*}
      \|\nabla h(x_t + \alpha_t v_t^\T) - \nabla h(x_t)\| \ = \ \|\int_0^1 \nabla^2 h(x_t + \alpha_t v_t^\T \cdot t) \alpha_t v_t^\T \d t\| \ \le \ H_h(x_t, M_{1}^\P\varrho) \cdot \alpha_t \|v_t^\T\|.
    \end{align*}
    Thus we obtain $ D_h(x_t + \alpha_t v_t^\T, x_t) - D_h(x_{t}^+, x_t)) \le H_h(x_t, M_{1}^\P\varrho) \cdot \|\alpha_t v_t\|^2$. Combining the above inequalities yields 
    \begin{align*}
        &D_h(x_{t+1}, x_t) - D_h(x_t + \alpha_t v_t, x_t) \\
        \le \ &\left(2G_h(x_t, M_{1}^\P\varrho) (M_{2}^\P + M_{3}^\P)(1 + \tau) + H_h(x_t, M_{1}^\P\varrho)\right)\|\alpha_t v_t\|^2,  
    \end{align*}
    which completes the proof.
\end{proof}

\begin{lemma}\label{lemma.descent property projection}
    Suppose Assumptions \ref{assumption} and \ref{assumption.correction u} hold. Fix an iterate $x_t \in \mathcal{L}(\widetilde{x})$. For any $\gamma_t \ge L$ and $\alpha_t \in (0,1)$, there exists a constant $\alpha_t^\prime > 0$ such that for any $0 < \alpha_t \le \alpha_t^\prime$, the next iterate $x_{t+1}$ in Algorithm \ref{alg.projection relative GD} satisfies
     \begin{align*}
        F(x_{t+1}) - F(x_t) \ \le \ -\frac{\gamma_t \lambda \alpha_t}{4}\|v_t\|^2,
    \end{align*} 
    where $\alpha_t^\prime$ is defined in \eqref{eq.alpha bar projection}.
\end{lemma}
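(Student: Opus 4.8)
The plan is to follow the blueprint of the proof of Lemma~\ref{lemma.descent property}, replacing the retraction inequalities~\eqref{eq.retraction inequality} by the projection inequalities of Lemma~\ref{lemma.projection inequality}, the Bregman-deviation bound of Lemma~\ref{lemma.bound bregman deviation} by that of Lemma~\ref{lemma.projection Bregman deviation}, and the tangent-space variational inequality by the first-order optimality condition of the \emph{unconstrained} subproblem~\eqref{eq.projection subproblem}. Throughout I would restrict attention to step sizes with $\alpha_t\le\min\{1,\ \varrho/(2\|v_t+u_t\|)\}$, where $\varrho>0$ is the constant from Lemma~\ref{lemma.projection inequality} attached to the compact set $\mathcal{L}(\widetilde{x})$; this forces $x_t+\alpha_t(v_t+u_t)\in\mathbb{B}(x_t,\varrho/2)$, so that $x_{t+1}=\P_\M(x_t+\alpha_t(v_t+u_t))$ is well-defined and $C^\infty$, and the quantities $G_h(x_t,M_1^\P\varrho)$, $H_h(x_t,M_1^\P\varrho)$ entering $\Psi_1(x_t)$ are finite. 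Since $g\equiv0$ here, $F=f$.

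First I would decompose $v_t=v_t^\T+v_t^\N$ with $v_t^\T=\P_{\T_{x_t}\M}(v_t)$, $v_t^\N=\P_{\N_{x_t}\M}(v_t)$, and apply Lemma~\ref{lemma.projection inequality} with \emph{tangent} component $\alpha_t v_t^\T\in\T_{x_t}\M$ and \emph{normal} component $\alpha_t(v_t^\N+u_t)\in\N_{x_t}\M$ (the latter is normal since $u_t\in\N_{x_t}\M$), whose sum is $\alpha_t(v_t+u_t)$; using $\|v_t^\T\|\le\|v_t\|$ and $\|v_t^\N+u_t\|\le(1+\tau)\|v_t\|$, this yields $\|x_{t+1}-x_t-\alpha_t v_t^\T\|\le(M_2^\P+M_3^\P(1+\tau))\alpha_t^2\|v_t\|^2$. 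Then, by relative $L$-smoothness of $f$ and $\gamma_t\ge L$,
\[
F(x_{t+1})-F(x_t)\ \le\ \inner{\nabla f(x_t)}{x_{t+1}-x_t}+\gamma_t D_h(x_{t+1},x_t),
\]
and I split $x_{t+1}-x_t=(x_{t+1}-x_t-\alpha_t v_t^\T)+\alpha_t v_t^\T$. For the second piece, $\inner{\nabla f(x_t)}{\alpha_t v_t^\T}=\alpha_t\inner{\grad f(x_t)}{v_t}$ because $\grad f(x_t)=\P_{\T_{x_t}\M}(\nabla f(x_t))$ and $v_t^\N$ is normal; for the first, Cauchy--Schwarz and the displacement estimate give a bound of $\|\nabla f(x_t)\|(M_2^\P+M_3^\P(1+\tau))\alpha_t^2\|v_t\|^2$. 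For the Bregman term, the convexity estimate $D_h(x_t+\alpha_t v_t,x_t)\le\alpha_t D_h(x_t+v_t,x_t)$ (from the proof of Lemma~\ref{lemma.bound bregman deviation}) together with Lemma~\ref{lemma.projection Bregman deviation} gives $\gamma_t D_h(x_{t+1},x_t)\le\alpha_t\gamma_t D_h(x_t+v_t,x_t)+\gamma_t\Psi_1(x_t)\|\alpha_t v_t\|^2$.

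Next, exactly as in the proof of Lemma~\ref{lemma.descent property}, the optimality condition $\grad f(x_t)+\gamma_t(\nabla h(x_t+v_t)-\nabla h(x_t))=0$ of~\eqref{eq.projection subproblem} gives $\inner{\grad f(x_t)}{v_t}+\gamma_t D_h(x_t+v_t,x_t)=-\gamma_t D_h(x_t,x_t+v_t)\le-\tfrac{\gamma_t\lambda}{2}\|v_t\|^2$, using $\lambda$-strong convexity of $h$ (Assumption~\ref{assumption}). Collecting the three contributions yields
\[
F(x_{t+1})-F(x_t)\ \le\ \Big(\alpha_t\big(\|\nabla f(x_t)\|(M_2^\P+M_3^\P(1+\tau))+\gamma_t\Psi_1(x_t)\big)-\tfrac{\gamma_t\lambda}{2}\Big)\alpha_t\|v_t\|^2,
\]
and choosing $\alpha_t^\prime$ (this is~\eqref{eq.alpha bar projection}) to be the minimum of $1$, $\varrho/(2\|v_t+u_t\|)$ and $\frac{\gamma_t\lambda}{4(\|\nabla f(x_t)\|(M_2^\P+M_3^\P(1+\tau))+\gamma_t\Psi_1(x_t))}$ makes the parenthesis $\le-\tfrac{\gamma_t\lambda}{4}$ for every $0<\alpha_t\le\alpha_t^\prime$, which is the claim (note $\alpha_t^\prime\le1$, consistent with restricting to $\alpha_t\in(0,1)$).

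The proof has no deep obstacle; it is careful bookkeeping parallel to Lemma~\ref{lemma.descent property}. The point needing attention is the tangent/normal split: one must feed Lemma~\ref{lemma.projection inequality} the tangent displacement $\alpha_t v_t^\T$ (not $\alpha_t v_t$) together with the normal displacement $\alpha_t(v_t^\N+u_t)$, verify these lie in the correct subspaces, and observe that the step-size cap $\varrho/(2\|v_t+u_t\|)$ is precisely what keeps $x_t+\alpha_t(v_t+u_t)$ in the region where the projection is $C^\infty$ with $\mathrm{D}\P_\M(x_t)=\P_{\T_{x_t}\M}$; Lemma~\ref{lemma.bound projection normal space} is not needed here, since the normal part is controlled purely through $\tau$ and nonexpansiveness of $\P_{\N_{x_t}\M}$. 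One should also check that every constant depends only on $x_t$ (through $\mathcal{L}(\widetilde{x})$), so that the subsequent complexity argument (Theorem~\ref{thm.convergence of projection based}) can render them uniform.
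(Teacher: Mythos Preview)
Your argument is correct and in fact slightly more economical than the paper's. The one substantive difference is in how the inner product $\inner{\nabla f(x_t)}{x_{t+1}-x_t}$ is handled: the paper first decomposes $\nabla f(x_t)=\grad f(x_t)+\P_{\N_{x_t}\M}(\nabla f(x_t))$ and then invokes Lemma~\ref{lemma.bound projection normal space} to control $\|\P_{\N_{x_t}\M}(x_{t+1}-x_t)\|\le M_4^\P\|x_{t+1}-x_t\|^2$, which contributes an additional $(M_1^\P)^2M_4^\P\|\nabla f(x_t)\|$ term to $\Psi_2(x_t)$. You instead bound $\inner{\nabla f(x_t)}{x_{t+1}-x_t-\alpha_t v_t^\T}$ directly by Cauchy--Schwarz together with the second inequality of Lemma~\ref{lemma.projection inequality}, which already delivers an $O(\alpha_t^2\|v_t\|^2)$ estimate without any appeal to Lemma~\ref{lemma.bound projection normal space}. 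Your route therefore yields a smaller error constant and a larger admissible step size. The only caveat is your parenthetical ``this is~\eqref{eq.alpha bar projection}'': the formula you obtain is not literally the paper's~\eqref{eq.alpha bar projection}, since the paper's $\Psi_2(x_t)$ carries the extra $(M_1^\P)^2M_4^\P\|\nabla f(x_t)\|$ term; however, because your threshold dominates the paper's, the stated descent inequality holds \emph{a fortiori} for all $\alpha_t\le\alpha_t'$ with $\alpha_t'$ exactly as in~\eqref{eq.alpha bar projection}, so the lemma as written is established.
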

\begin{proof}
   By relative smoothness of $f$, it follows
    \begin{align*}
        f(x_{t+1}) - f(x_t) \ \le \ &\inner{\nabla f(x_t)}{x_{t+1} - x_t} + \gamma_t D_h(x_t + \alpha_t v_t, x_t) \\
        &+ \gamma_t D_h(x_{t+1}, x_t) - \gamma_t D_h(x_t + \alpha_t v_t, x_t).
    \end{align*}
    Choosing the stepsize \(\alpha_t\) such that \(\alpha_t \le \varrho / (2\|v_t + u_t\|)\), and applying Lemma~\ref{lemma.projection Bregman deviation}, it yields
    \begin{align*}
        f(x_{t+1}) - f(x_t) \ \le \ \inner{\nabla f(x_t)}{x_{t+1} - x_t} + \gamma_t D_h(x_t + \alpha_t v_t, x_t) + \gamma_t\Psi_1(x_t)\|\alpha_t v_t\|^2.
    \end{align*}
    The inner product term can be upper bounded as
    \begin{align*}
        &\inner{\nabla f(x_t)}{x_{t+1} - x_t} \\
        = \ &\inner{\grad f(x_t)}{x_{t+1} - x_t} + \inner{\P_{\N_{x_t}\M}(\nabla f(x_t))}{x_{t+1} - x_t} \\
        = \ &\inner{\grad f(x_t)}{x_{t+1} - x_t} + \inner{\P_{\N_{x_t}\M}(\nabla f(x_t))}{\P_{\N_{x_t}\M}(x_{t+1} - x_t)} \\
        \le \ &\inner{\grad f(x_t)}{x_{t+1} - x_t} + \|\P_{\N_{x_t}\M}(\nabla f(x_t))\| \cdot \|\P_{\N_{x_t}\M}(x_{t+1} - x_t)\| \\
        \le \ &\inner{\grad f(x_t)}{x_{t+1} - x_t} + \|\nabla f(x_t)\| \cdot \|\P_{\N_{x_t}\M}(x_{t+1} - x_t)\|.
    \end{align*}
    We now estimate $\|\P_{\N_{x_t}\M}(x_{t+1} - x_t)\|$. Recall that $\|x_{t+1} - x_t\| = \|\P_{\M}(x_t + \alpha_t(v_t + u_t)) - x_t\| \le \alpha_t M_{1}^\P\|v_t^\T\|$ when $\alpha_t \le \varrho / (2\|v_t + u_t\|)$. By using Lemma \ref{lemma.bound projection normal space}, $\|\P_{\N_{x_t}\M}(x_{t+1} - x_t)\| \le M_4^\P\|x_{t+1} - x_t\|^2 \le (M_{1}^\P)^2 M_4^\P \|\alpha_t v_t\|^2$. Let $x_t^+ \triangleq x_t + \alpha_t (v_t + u_t)$. Recall that the correction normal vector $u_t \in \N_{x_t}\M$. Hence, we obtain
    \begin{align*}
       &\inner{\grad f(x_t)}{x_{t+1} - x_t} \\
       = \ &\inner{\grad f(x_t)}{x_{t+1} - x_t^+} + \inner{\grad f(x_t)}{x_t^+ - x_t} \\
       = \ &\inner{\grad f(x_t)}{x_{t+1} - x_t - \alpha_t v_t^\T} + \alpha_t \inner{\grad f(x_t)}{v_t} \\
       \le \ &\|\grad f(x_t)\| \cdot \|\P_{\M}(x_t + \alpha_t (v_t + u_t)) - x_t - \alpha_t v_t^\T\| + \alpha_t \inner{\grad f(x_t)}{v_t} \\
       \le \ &\|\nabla f(x_t)\| \cdot \left((M_{2}^\P + M_{3}^\P)(1 + \tau) \|\alpha_t v_t\|^2\right) + \alpha_t \inner{\grad f(x_t)}{v_t},
    \end{align*}
    where we use the same argument in the proof of Lemma \ref{lemma.projection Bregman deviation}. Let $\Psi_2(x_t) \triangleq (1 + \tau)(M_{2}^\P + M_{3}^\P)\|\nabla f(x_t)\| + (M_{1}^\P)^2M_4^\P\|\nabla f(x_t)\|$. By Combining the above inequalities, we have
     \begin{align*}
        f(x_{t+1}) - f(x_t) \ \le \
    \left(\gamma_t\Psi_1(x_t) + \Psi_2(x_t)\right)\|\alpha_t v_t\|^2 + \alpha_t \inner{\grad f(x_t)}{v_t} + \gamma_t D_h(x_t + \alpha_t v_t, x_t).
    \end{align*}
    To proceed, we use the first-order optimality condition of subproblem~\eqref{eq.projection subproblem}, which implies $ \grad f(x_t) + \gamma_t \nabla h(x_t + v_t) - \gamma_t \nabla h(x_t) = 0$. From Lemma~\ref{lemma.bound bregman deviation}, we also have $D_h(x_t + \alpha_t v_t, x_t) \le \alpha_t  D_h(x_t + v_t, x_t)$ for any $\alpha_t \in (0,1)$. We now derive
    \begin{align*}
       &\alpha_t\inner{\grad f(x_t)}{v_t} + \gamma_t D_h(x_t + \alpha_t v_t, x_t) \\
       = \ & \alpha_t \gamma_t \inner{ \nabla h(x_t) - \nabla h(x_t + v_t )}{v_t} + \alpha_t \gamma_t D_h(x_{t} + v_t, x_t) \\
       = \ & \alpha_t\gamma_t \left(h(x_t + v_t) - h(x_t) - \inner{\nabla h(x_t + v_t)}{v_t} \right) \\
       \le \ &-\frac{\alpha_t \gamma_t \lambda}{2}\|v_t\|^2,
    \end{align*}
    where the last inequality follows from the $\lambda$-strong convexity of $h$. Therefore, we obtain
     \begin{align*}
        f(x_{t+1}) - f(x_t) \ \le \ &\left(\gamma_t\Psi_1(x_t) + \Psi_2(x_t)\right) \|\alpha_t v_t\|^2 -\frac{\alpha_t \gamma_t \lambda}{2}\|v_t\|^2.
    \end{align*}
    Let
    \begin{equation}\label{eq.alpha bar projection}
       \alpha_t^\prime = \min\left\{\frac{\varrho}{2\|v_t + u_t\|}, \frac{\gamma_t\lambda}{4 \left(\gamma_t\Psi_1(x_t) + \Psi_2(x_t)\right)}\right\}. 
    \end{equation}
    Then, since $g \equiv 0$, we conclude that for any $0 < \alpha_t \le \min\{1, \alpha_t^\prime\}$,
   \begin{align*}
       F(x_{t+1}) - F(x_t) \ \le \ -\frac{\gamma_t\lambda\alpha_t}{4} \|v_t\|^2.
   \end{align*}
   The proof is completed.
\end{proof}

Similar to the retraction-based approach, we establish a per-iteration descent lemma for Algorithm~\ref{alg.projection relative GD}, which ensures that the backtracking line-search procedure is well-defined. Moreover, it follows by induction that the entire sequence of iterates \(\{x_t\}_{t\ge0}\) remains within \(\mathcal{L}(\widetilde{x})\) whenever \(x_0\in\mathcal{L}(\widetilde{x})\). Consequently, $\alpha_t^\prime$ in the above lemma admits a strictly positive lower bound, which further implies the following convergence result.

\begin{theorem}\label{thm.convergence of projection based}
   Suppose Assumptions \ref{assumption} and \ref{assumption.correction u} hold. Set the initial point $x_0 = \widetilde{x}$. Then every limit point of the sequence $\left\{x_t\right\}_{t \ge 0}$ generated by Algorithm \ref{alg.projection relative GD} with $\gamma_t = L$ satisfies the optimality condition of problem \eqref{eq.main}. Moreover, for any given accuracy \( \epsilon > 0 \), after at most \( \mathcal{O}(\epsilon^{-2}) \) iterations, Algorithm~\ref{alg.projection relative GD} with \( \gamma_t = L \) returns a direction \( v_t \) satisfying \( \|v_t\| \le \epsilon \).
\end{theorem}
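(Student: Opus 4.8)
The plan is to follow the same blueprint as the proof of Theorem~\ref{thm.convergence of retraction based}, with Lemma~\ref{lemma.descent property projection} playing the role of the retraction-based descent estimate and the projection inequalities of Lemmas~\ref{lemma.projection inequality} and~\ref{lemma.bound projection normal space} replacing the retraction inequalities. First I would observe that Lemma~\ref{lemma.descent property projection} guarantees the backtracking while-loop in Algorithm~\ref{alg.projection relative GD} terminates after finitely many halvings and returns $x_{t+1}$ with $F(x_{t+1})\le F(x_t)$; hence, by induction from $x_0=\widetilde{x}\in\mathcal{L}(\widetilde{x})$, the whole sequence $\{x_t\}_{t\ge0}$ remains in the compact sublevel set $\mathcal{L}(\widetilde{x})$ and in particular admits at least one limit point.

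Next I would extract a uniform positive lower bound on the stepsizes $\{\alpha_t\}$. By the first-order optimality condition of the unconstrained strongly convex subproblem~\eqref{eq.projection subproblem} with $\gamma_t=L$, the direction $v_t=v^*(x_t)$ is the unique solution of $\nabla h(x_t+v_t)=\nabla h(x_t)-\grad f(x_t)/L$, and since $\nabla h$ is invertible on its domain with $\tfrac1\lambda$-Lipschitz inverse (by $\lambda$-strong convexity), $v^*$ depends continuously on $x$; as $\{x_t\}\subseteq\mathcal{L}(\widetilde{x})$ is bounded, so are $\{v_t\}$ and, using $\|u_t\|\le\tau\|v_t\|$, $\{v_t+u_t\}$. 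Moreover all the $x_t$ together with the fixed-radius balls $\overline{\mathbb{B}}(x_t,M_1^\P\varrho)$ lie in one bounded region, so $\|\nabla f(x_t)\|$, $G_h(x_t,M_1^\P\varrho)$, $H_h(x_t,M_1^\P\varrho)$ — and therefore $\Psi_1(x_t)$, $\Psi_2(x_t)$ — are uniformly bounded over $t$. Substituting these bounds and $\gamma_t=L$ into the formula~\eqref{eq.alpha bar projection} for $\alpha_t^\prime$ gives $\alpha_t^\prime\ge c$ for some constant $c>0$, so the line search yields $\alpha_t\ge\min\{1,\rho c\}=:\alpha^\prime>0$ for all $t$.

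Convergence then follows exactly as in the retraction-based case: Lemma~\ref{lemma.descent property projection} with $\gamma_t=L$ and $\alpha_t\ge\alpha^\prime$ gives $F(x_{t+1})-F(x_t)\le-\tfrac{L\lambda\alpha^\prime}{4}\|v_t\|^2$, and telescoping together with $F^*>-\infty$ forces $\sum_{t\ge0}\|v_t\|^2<\infty$, hence $\|v_t\|\to0$. For the characterization of limit points, I would invoke the continuity of $v^*$ established above: along any subsequence $x_{t_k}\to x^*\in\mathcal{L}(\widetilde{x})$ we get $v_{t_k}=v^*(x_{t_k})\to v^*(x^*)$, while $\|v_{t_k}\|\to0$, so $v^*(x^*)=0$; by the optimality condition of~\eqref{eq.projection subproblem}, $v^*(x^*)=0$ is equivalent to $\grad f(x^*)=0$, i.e. $x^*$ satisfies the stationarity condition of~\eqref{eq.main} with $g\equiv0$. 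Finally, for the complexity bound, if $\|v_t\|>\epsilon$ for all $t<T$, telescoping yields $F(x_0)-F^*\ge\tfrac{L\lambda\alpha^\prime}{4}\sum_{t<T}\|v_t\|^2>\tfrac{L\lambda\alpha^\prime}{4}T\epsilon^2$, so $T=\mathcal{O}(\epsilon^{-2})$.

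The main obstacle — and the reason the statement requires the extra Assumption~\ref{assumption.correction u} and the correction vector $u_t$ — is controlling, uniformly over a possibly non-compact manifold, the gap between the subproblem (solved in the ambient space) and the actual update $\P_\M\big(x_t+\alpha_t(v_t+u_t)\big)$: one must keep $\|v_t+u_t\|$ small enough that the projection stays inside the uniform projection-radius neighborhood of Lemma~\ref{lemma.projection inequality}, and simultaneously ensure the resulting $\alpha_t^\prime$ does not degenerate to zero. This bookkeeping is precisely what Lemmas~\ref{lemma.projection inequality}--\ref{lemma.descent property projection} encapsulate, so once they are available the remaining argument is essentially a transcription of the proof of Theorem~\ref{thm.convergence of retraction based}.
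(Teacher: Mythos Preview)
Your proposal is correct and follows essentially the same blueprint as the paper's proof: use Lemma~\ref{lemma.descent property projection} to confine the iterates to the compact sublevel set, extract a uniform positive lower bound $\alpha'$ on the backtracking stepsizes by bounding $\Psi_1(x_t),\Psi_2(x_t)$ and $\|v_t+u_t\|$ over $\mathcal{L}(\widetilde{x})$, and then telescope to obtain $\|v_t\|\to0$ and the $\mathcal{O}(\epsilon^{-2})$ complexity. The only noteworthy difference is that for continuity of $v^*$ the paper invokes the general Lemma~\ref{lemma.continuity of v*}, whereas you argue directly from the optimality condition $\nabla h(x+v^*(x))=\nabla h(x)-\grad f(x)/L$ and the $1/\lambda$-Lipschitz inverse of $\nabla h$; your route is valid (and slightly more direct) for the unconstrained, smooth subproblem~\eqref{eq.projection subproblem}.
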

\begin{proof}
    By Lemma \ref{lemma.continuity of v*}, we know that the minimizer of subproblem \ref{eq.projection subproblem} is continuous with respect to $x$. Since all iterates belong to $\mathcal{L}(\widetilde{x})$, which is compact, then the sequence \( \{v_t\}_{t \ge 0} \) is bounded by a uniform constant $\overline{v}$. Using an argument similar to the one used in the proof of Theorem~\ref{thm.convergence of retraction based}, it can be shown that the sequences \( \{\Psi_1(x_t)\}_{t \ge 0} \) and \( \{\Psi_2(x_t)\}_{t \ge 0} \) are bounded. Hence, there exist constants \(\Psi_1 > 0\) and \(\Psi_2 > 0\) such that \( \Psi_1(x_t) \le \Psi_1 \) and \( \Psi_2(x_t) \le \Psi_2 \) for all \( t \ge 0 \). Due to the backtracking linesearch, it holds that \( \alpha_t \ge \rho \alpha_t^\prime \). Then we can find a constant \( \alpha^\prime > 0 \) such that for all \( t \ge 0 \),
    \begin{align*}
        \alpha_t \ \ge \ &\min\left\{\frac{\rho \varrho}{2\|v_t + u_t\|}, \frac{\rho \gamma_t\lambda}{4 \left(\gamma_t\Psi_1(x_t) + \Psi_2(x_t)\right)}\right\} \\
        \ge \ & \min\left\{\frac{\rho \varrho}{2(1+\tau)\overline{v}}, \frac{\rho L \lambda}{4 \left(L \Psi_1 + \Psi_2\right)}\right\} \ \triangleq \ \alpha^\prime.
    \end{align*}
    By a similar argument in the proof of Theorem \ref{thm.convergence of retraction based}, we can conclude the sequence \( \{x_t\}_{t \ge 0} \) converges to a stationary point of problem \eqref{eq.main}, and \(\{x_t\}_{t \ge 0}\) admits at least one limit point. Besides, after at most $\mathcal{O}(\epsilon^{-2})$ iterations, the algorithm returns a direction \( v_t \) satisfying \( \|v_t\| \le \epsilon \).
\end{proof}

At the end of this section, we prove that for the fixed-rank manifold, the update direction generated by \eqref{eq.projection subproblem} coincides with the direction obtained in the retraction-based case when the reference function is chosen to be the quartic reference function.

\begin{proposition}\label{prop.projection fixed rank}
    Suppose the nonsmooth term $g \equiv 0$, and consider the fixed-rank manifold $\M_r = \{X \in \reals^{m \times p}: \operatorname{rank}(X) = r\}$, with $0 < r \le \min\{m,p\}$. If the reference function is chosen as $h(X) = \frac{1}{4}\|X\|^4 + \frac{1}{2}\|X\|^2$, then the direction $V_t$ generated by \eqref{eq.projection subproblem} is also the solution to \eqref{eq.update nabla}.
\end{proposition}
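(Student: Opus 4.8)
The plan is to reduce the two subproblems \eqref{eq.update nabla} (with $g\equiv 0$) and \eqref{eq.projection subproblem} to one and the same scalar cubic equation, by exploiting the conic structure of the fixed-rank manifold. The single geometric input needed is that $X_t$ lies in its own tangent space: since $\operatorname{rank}(\lambda X)=\operatorname{rank}(X)$ for every $\lambda\neq 0$, the curve $t\mapsto (1+t)X_t$ stays in $\M_r$ for small $|t|$, and differentiating at $t=0$ gives $X_t\in\T_{X_t}\M_r$. Hence $\P_{\T_{X_t}\M_r}(X_t)=X_t$ and $\P_{\N_{X_t}\M_r}(X_t)=0$.

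With this in hand I would first treat the retraction-based subproblem. By Proposition~\ref{propo.quartic closed form}, its solution is $V_t^{\mathrm{R}}=-\theta_t\,\P_{\T_{X_t}\M_r}(C_t)-\P_{\T_{X_t}\M_r}(X_t)$ with $C_t=\nabla f(X_t)/\gamma_t-\nabla h(X_t)$, and since $\P_{\N_{X_t}\M_r}(X_t)=0$ the cubic \eqref{eq.one dimensional equation} loses its normal term, so $\theta_t>0$ is the unique positive root of $\|\P_{\T_{X_t}\M_r}(C_t)\|^2\theta^3+\theta-1=0$. Using $\nabla h(X)=(\|X\|^2+1)X$ together with $\P_{\T_{X_t}\M_r}(X_t)=X_t$, I compute $\P_{\T_{X_t}\M_r}(C_t)=\grad f(X_t)/\gamma_t-(\|X_t\|^2+1)X_t=:-B_t$. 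Thus $V_t^{\mathrm{R}}=\theta_t B_t-X_t$ with $\|B_t\|^2\theta^3+\theta-1=0$ (and $V_t^{\mathrm{R}}=-X_t$ in the degenerate case $B_t=0$).

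Next I would analyze \eqref{eq.projection subproblem} through its first-order optimality condition $\grad f(X_t)+\gamma_t\bigl(\nabla h(X_t+V_t)-\nabla h(X_t)\bigr)=0$, which characterizes the minimizer uniquely because $D_h(\cdot,X_t)$ is $\lambda$-strongly convex. For the quartic $h$ this reads $(\|X_t+V_t\|^2+1)(X_t+V_t)=(\|X_t\|^2+1)X_t-\grad f(X_t)/\gamma_t=B_t$. Since $\|X_t+V_t\|^2+1>0$, the vector $X_t+V_t$ is a nonnegative multiple of $B_t$; writing $X_t+V_t=s_t B_t$ and taking norms shows $s_t$ is the unique positive root of $\|B_t\|^2 s^3+s-1=0$ (and $X_t+V_t=0$ when $B_t=0$, matching the branch $V_t^{\mathrm{R}}=-X_t$). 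This is precisely the cubic satisfied by $\theta_t$, so $s_t=\theta_t$ and therefore $V_t=s_t B_t-X_t=\theta_t B_t-X_t=V_t^{\mathrm{R}}$, which is the claim.

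The argument is largely bookkeeping, and the only genuinely substantive step — the one I would state and verify most carefully — is the identity $\P_{\T_{X_t}\M_r}(C_t)=-B_t$, which rests entirely on the fact that the tangent projection fixes $X_t$. The remaining ingredients (uniqueness of the positive root of the cubic, equivalence of the first-order condition with the minimizer via strong convexity, and the $B_t=0$ degenerate branch) are routine.
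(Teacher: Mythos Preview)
Your proposal is correct and follows essentially the same route as the paper: both arguments reduce the two subproblems to the same scalar cubic by exploiting that $X_t\in\T_{X_t}\M_r$ (and hence $\P_{\N_{X_t}\M_r}(X_t)=0$), then invoke Proposition~\ref{propo.quartic closed form} and the first-order condition of \eqref{eq.projection subproblem} to match the directions. The only cosmetic difference is that the paper verifies $X_t\in\T_{X_t}\M_r$ via the explicit SVD description of the tangent space, whereas you use the conic curve $t\mapsto(1+t)X_t$; your argument is slightly more direct for this particular fact, but otherwise the two proofs coincide.
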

\begin{proof}
    For clarity, let $V_t^\P$ denote the solution to \eqref{eq.projection subproblem}, and let $V_t^{\operatorname{R}}$ denote the solution to \eqref{eq.update nabla}. Define $Y_t \triangleq X_t + V_t^\P$. Then $Y_t$ is the solution to the problem
    \begin{align*}
       Y_t \ = \ \argmin_{Y \in \reals^{m \times p}} \ \inner{\grad f(X_t)/\gamma_t}{Y} + D_h(Y, X_t). 
    \end{align*}
    Let $C_t^\prime \triangleq \grad f(X_t) / \gamma_t - \nabla h(X_t)$.
    By an argument similar to that in the proof of Proposition \ref{propo.quartic closed form}, we have $Y_t = -\theta_t C_t^\prime$, where $\theta_t$ is the unique positive solution to the equation $\|C_t^\prime\|^2\theta^3 + \theta - 1 = 0$. Thus, $V_t^\P = -\theta_t C_t^\prime - X_t$. Let $X_t \triangleq U_t \Sigma_t V_t^\top$ be the SVD decomposition. By Proposition 2.1 in \cite{vandereycken2012low}, the tangent space at $X_t$ is
    \begin{align*}
        \T_{X_t}\M_r = \{U_t M V_t^\top + UV_t^\top + U_t V^\top: M \in \reals^{r \times r}, U \in \reals^{m \times r}, V \in \reals^{p \times r}\}.
    \end{align*}
    Clearly, $X_t \in \T_{X_t}\M_r$ when $M = \Sigma_t$, $U = 0$, and $V = 0$. Hence, $\P_{\N_{X_t}\M_r}(X_t) = 0$. Moreover, for the chosen reference function $h(X) = \frac{1}{4}\|X\|^4 + \frac{1}{2}\|X\|^2$, we have $\nabla h(X_t) = (\|X_t\|^2 + 1)X_t$. Notice that $\nabla h(X_t) = \P_{\T_{X_t}\M_r}\left(\nabla h(X_t)\right)$, which implies that $C_t^\prime = \P_{\T_{X_t}\M_r}(C_t)$, where $C_t$ is defined in \eqref{eq.simplified subproblem}. Consequently, equation \eqref{eq.one dimensional equation} reduces to $\|C_t^\prime\|^2\theta^3 + \theta - 1 = 0$, and thus, $\theta_t$ is precisely the positive solution of \eqref{eq.one dimensional equation}. Therefore, by proposition \ref{propo.quartic closed form}, $V_t^{\operatorname{R}} = -\theta_t \cdot \P_{\mathcal{T}_{X_t}\M_r}\left(\nabla f(X_t) / \gamma_t - \nabla h(X_t)\right) - \P_{\mathcal{T}_{X_t}\M_r}\left(X_t\right) = -\theta_t C_t^\prime - X_t = V_t^\P$, completing the proof.
\end{proof}

\section{Extension to the Stochastic Setting}\label{section.stochastic}
In this section, we show that both retraction-based and projection-based Bregman gradient methods can be extended to the Riemannian stochastic optimization setting. Specifically, we consider the following optimization problem:
\begin{equation}\label{eq.main stochastic}
    \min _{x \in \mathcal{M}} F(x) = f(x) + g(x), \ \text{with} \ f(x) \triangleq \E_{\pi}[f(x,\pi)],
\end{equation}
where $\E_{\pi}$ is the expectation with respect to the random variable $\pi$. We assume access to a stochastic first-order oracle that returns gradients \(\nabla f(x, \pi)\), which are unbiased estimators of the true gradient with bounded variance. For all \(x \in \reals^n\), we have $\E_{\pi}\left[\nabla f(x,\pi)\right] = \nabla f(x)$, and $\E_{\pi}\left[\|\nabla f(x,\pi) - \nabla f(x)\|^2\right] \le \sigma^2$ with $\sigma > 0$.

For the retraction-based approach, at each iteration, we replace the full gradient in the update step \eqref{eq.update nabla} of Algorithm~\ref{alg.relative GD} with a stochastic estimator of the Euclidean gradient. We randomly sample a mini-batch \(\mathcal{B}_t\) and define $\nabla f_{\B_t}(x_t) \triangleq \frac{1}{|\B_t|}\sum_{j \in \B_t}\nabla f(x_t,\pi_t^{(j)})$, where \(\{\pi_t^{(j)}\}_{j \in \mathcal{B}_t}\) are i.i.d. samples drawn from the underlying distribution. The corresponding mini-batch Riemannian gradient follows $\grad f_{\B_t}(x_t) = \P_{\T_{x_t}\M}(\nabla f_{\B_t}(x_t))$. We then solve the subproblem~\eqref{eq.stochastic retraction update} using this stochastic gradient and update the iterate accordingly. The resulting procedure is summarized in Algorithm~\ref{alg.stochastic relative GD}. Similarly, for smooth Riemannian optimization problems, (\(g \equiv 0\) in problem \eqref{eq.main stochastic}), the projection-based Riemannian Bregman gradient method can be generalized in a similar manner. This stochastic variant is presented in Algorithm~\ref{alg.stochastic project GD}. In the stochastic case, we set $u_t = 0$ for simplicity.
\begin{algorithm}[h]
	\caption{Stochastic Retraction-Based Riemannian Bregman Gradient Method}
	\label{alg.stochastic relative GD}
	\begin{algorithmic}[1] 
        \item \textbf{Input:} initial point $x_0 \in \M$, $\gamma_t \ge L$, $\alpha_t > 0$
        \item \textbf{For} $t = 0, 1, \ldots$ \textbf{do}
        \item \quad \quad Obtain update direction $\zeta_t^{\operatorname{R}}$ by solving the subproblem
        \begin{equation}\label{eq.stochastic retraction update}
            \zeta_t^{\operatorname{R}} \ = \ \argmin_{\zeta \in \T_{x_t}\M} \ \inner{\nabla f_{\B_t}(x_t)}{\zeta} + \gamma_t D_h(x_t + \zeta, x_t) + g(x_t + \zeta)
        \end{equation}
        \item \quad \quad Update $x_{t+1} =  \retr(x_t,\alpha_t \zeta_t^{\operatorname{R}})$
	\end{algorithmic}
\end{algorithm}

\begin{algorithm}[h]
	\caption{Stochastic Projection-Based Riemannian Bregman Gradient Method}
	\label{alg.stochastic project GD}
	\begin{algorithmic}[1] 
        \item \textbf{Input:} initial point $x_0 \in \M$, $\gamma_t \ge L$, $\alpha_t > 0$
        \item \textbf{For} $t = 0, 1, \ldots$ \textbf{do}
        \item \quad \quad Obtain update direction $\zeta_t^{\P}$ by solving the subproblem
        \begin{equation}\label{eq.stochastic projection update}
            \zeta_t^{\P} \ = \ \argmin_{\zeta \in \reals^n} \ \inner{\grad f_{\B_t}(x_t)}{\zeta} + \gamma_t D_h(x_t + \zeta, x_t)
        \end{equation}
        \item \quad \quad Update $x_{t+1} = \P_{\M}(x_t + \alpha_t \zeta_t^{\P})$
	\end{algorithmic}
\end{algorithm}

To distinguish the update directions in the stochastic methods from those in the deterministic setting, we denote them by \(\zeta_t^{\operatorname{R}}\) and \(\zeta_t^{\P}\), corresponding to the retraction-based and projection-based methods, respectively. In the deterministic setting, we denote the update directions by \(v_t^{\operatorname{R}}\) and \(v_t^{\P}\). Recall that in Theorems~\ref{thm.convergence of retraction based} and~\ref{thm.convergence of projection based}, the norms \(\|v_t^{\operatorname{R}}\|\) and \(\|v_t^{\P}\|\) are used as measures of approximate stationarity, that is, $x_t$ is an $\epsilon$-approximate Riemannian stationary point if $\|v_t^{\operatorname{R}}\| \le \epsilon$ or $\|v_t^{\P}\| \le \epsilon$. However, in the stochastic setting, these vectors cannot be directly computed because the true gradient \(\nabla f(x)\)  is not accessible. The following lemma establishes a relationship between \(\|v_t^{\operatorname{R}}\|\), \(\|v_t^{\P}\|\)and \(\|\zeta_t^{\operatorname{R}}\|\), \(\|\zeta_t^{\P}\|\). This allows \(v_t^{\operatorname{R}}\) and \(v_t^{\P}\) to remain valid theoretical measures of stationarity, even though they cannot be evaluated.
\begin{lemma}\label{lemma.bound of G stochastic}
    Suppose Assumption \ref{assumption} holds. At each iteration $t$, it holds that
    \begin{align*}
        \|v_t^{\operatorname{R}}\|^2 \ &\le \ 2\E \left[\|\zeta_t^{\operatorname{R}}\|^2 \mid x_t \right] + 2\sigma^2 / (\gamma_t^2\lambda^2|\B_t|), \\
        \|v_t^{\P}\|^2 \ &\le \ 2\E \left[\|\zeta_t^{\P}\|^2 \mid x_t \right] + 2\sigma^2 / (\gamma_t^2\lambda^2|\B_t|),
    \end{align*}
    where the expectation is taken with respect to $\pi_{t}^{(1)}, \ldots, \pi_{t}^{(\B_t)}$.
\end{lemma}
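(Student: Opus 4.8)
The plan is to show that the stochastic direction $\zeta_t^{\operatorname{R}}$ (resp.\ $\zeta_t^{\P}$) stays close to the deterministic direction $v_t^{\operatorname{R}}$ (resp.\ $v_t^{\P}$) whenever the mini-batch gradient is close to the full gradient, and then to average over the randomness in $\B_t$. First I would exploit the common structure of the two subproblems: both $v_t^{\operatorname{R}}$ and $\zeta_t^{\operatorname{R}}$ minimize, over the linear subspace $\T_{x_t}\M$, an objective of the form $\inner{p}{\cdot} + R(\cdot)$ with the \emph{same} regularizer $R(v) \triangleq \gamma_t D_h(x_t + v, x_t) + g(x_t + v)$; the only difference is the linear coefficient, $p = \nabla f(x_t)$ for $v_t^{\operatorname{R}}$ and $p = \nabla f_{\B_t}(x_t)$ for $\zeta_t^{\operatorname{R}}$. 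Since $h$ is $\lambda$-strongly convex, $v \mapsto D_h(x_t + v, x_t)$ is $\lambda$-strongly convex, and since $g$ is convex, $R$ is $\gamma_t\lambda$-strongly convex; hence each of the two objectives is $\gamma_t\lambda$-strongly convex on $\T_{x_t}\M$.

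The key step is the stability estimate
\[
\norm{v_t^{\operatorname{R}} - \zeta_t^{\operatorname{R}}} \ \le \ \frac{\norm{\nabla f(x_t) - \nabla f_{\B_t}(x_t)}}{\gamma_t\lambda}.
\]
Rather than manipulating the (set-valued) optimality conditions, which is delicate because $g$ is nonsmooth and the subproblem is constrained to $\T_{x_t}\M$, I would use the symmetric two-point inequality for minimizers of strongly convex functions. Writing $\Phi_a(v) = \inner{a}{v} + R(v)$ and $\Phi_b(v) = \inner{b}{v} + R(v)$ with $a = \nabla f(x_t)$, $b = \nabla f_{\B_t}(x_t)$, strong convexity together with $v_t^{\operatorname{R}}, \zeta_t^{\operatorname{R}} \in \T_{x_t}\M$ gives $\Phi_a(\zeta_t^{\operatorname{R}}) \ge \Phi_a(v_t^{\operatorname{R}}) + \tfrac{\gamma_t\lambda}{2}\norm{v_t^{\operatorname{R}} - \zeta_t^{\operatorname{R}}}^2$ and $\Phi_b(v_t^{\operatorname{R}}) \ge \Phi_b(\zeta_t^{\operatorname{R}}) + \tfrac{\gamma_t\lambda}{2}\norm{v_t^{\operatorname{R}} - \zeta_t^{\operatorname{R}}}^2$. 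Adding these, the $R$-terms cancel and one is left with $\inner{a - b}{\zeta_t^{\operatorname{R}} - v_t^{\operatorname{R}}} \ge \gamma_t\lambda\norm{v_t^{\operatorname{R}} - \zeta_t^{\operatorname{R}}}^2$, and Cauchy--Schwarz yields the bound. For the projection-based case the same argument applies with the subspace replaced by $\reals^n$ and $a = \grad f(x_t)$, $b = \grad f_{\B_t}(x_t)$; since $\grad f(x_t) - \grad f_{\B_t}(x_t) = \P_{\T_{x_t}\M}\big(\nabla f(x_t) - \nabla f_{\B_t}(x_t)\big)$ has norm at most $\norm{\nabla f(x_t) - \nabla f_{\B_t}(x_t)}$, the same right-hand side is obtained for $\norm{v_t^{\P} - \zeta_t^{\P}}$.

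It then remains to combine this with Young's inequality $\norm{v_t^{\operatorname{R}}}^2 \le 2\norm{\zeta_t^{\operatorname{R}}}^2 + 2\norm{v_t^{\operatorname{R}} - \zeta_t^{\operatorname{R}}}^2$, to take the conditional expectation given $x_t$ (on which $v_t^{\operatorname{R}}$ is deterministic, so the left-hand side is unchanged), and to invoke the bounded-variance property of the mini-batch estimator: since $\nabla f_{\B_t}(x_t)$ is the average of $|\B_t|$ i.i.d.\ unbiased estimators each with variance at most $\sigma^2$, we have $\E\big[\norm{\nabla f_{\B_t}(x_t) - \nabla f(x_t)}^2 \mid x_t\big] \le \sigma^2 / |\B_t|$. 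Substituting gives $\norm{v_t^{\operatorname{R}}}^2 \le 2\,\E[\norm{\zeta_t^{\operatorname{R}}}^2 \mid x_t] + 2\sigma^2/(\gamma_t^2\lambda^2|\B_t|)$, and the projection-based bound follows in exactly the same way. I expect the stability estimate to be the only nontrivial ingredient; once the symmetric strong-convexity argument is in place, the remainder is routine.
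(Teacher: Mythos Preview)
Your proposal is correct and follows essentially the same approach as the paper: the same Young/triangle decomposition $\|v_t\|^2 \le 2\|\zeta_t\|^2 + 2\|v_t - \zeta_t\|^2$, the same stability bound $\|v_t - \zeta_t\| \le \|\nabla f(x_t) - \nabla f_{\B_t}(x_t)\|/(\gamma_t\lambda)$, and the same mini-batch variance estimate. The only cosmetic difference is that the paper derives the stability bound by summing the two variational inequalities (first-order optimality conditions) and invoking strong convexity of $h$ plus convexity of $g$, whereas you obtain the identical inequality $\inner{a-b}{\zeta_t - v_t} \ge \gamma_t\lambda\|v_t - \zeta_t\|^2$ directly from the function-value form of strong convexity of the whole objective on the feasible subspace; both routes are equivalent and equally rigorous.
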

\begin{proof}
    To prove the first inequality, we first observe that
  \begin{equation}\label{eq.bound G}
    \|v_t^{\operatorname{R}}\|^2 \ \le \ 2\|\zeta_t^{\operatorname{R}}\|^2 + 2\|v_t^{\operatorname{R}} - \zeta_t^{\operatorname{R}}\|^2.
  \end{equation}
  It therefore suffices to bound the term \(\|v_t^{\operatorname{R}} - \zeta_t^{\operatorname{R}}\|\). Recall $v_t^{\operatorname{R}}, \zeta_t^{\operatorname{R}}$ satisfy
  \begin{align*}
    v_t^{\operatorname{R}} \ &= \ \argmin_{v \in \T_{x_t}\M} \ \inner{\nabla f(x_t)}{v} + \gamma_t D_h(x_t + v, x_t) + g(x_t + v), \\
      \zeta_t^{\operatorname{R}} \ &= \ \argmin_{\zeta \in \T_{x_t}\M} \ \inner{\nabla f_{\B_t}(x_t)}{\zeta} + \gamma_t D_h(x_t + \zeta, x_t) + g(x_t + \zeta).
  \end{align*}
  From the optimality condition of constrained optimization, there exist $s_t \in \partial g(x_t + v_t^{\operatorname{R}})$ and $s_t^\prime \in \partial g(x_t + \zeta_t^{\operatorname{R}})$ such that
  \begin{subequations}
      \begin{align}
                   \inner{\nabla f(x_t) + \gamma_t\nabla h(x_t + v_t^{\operatorname{R}}) - \gamma_t \nabla h(x_t) + s_t}{v - v_t^{\operatorname{R}}} \ \ge \ 0, \ \forall v \in \T_{x_t}\M, \label{subeq.optimal conidtion v}\\ 
      \inner{\nabla f_{\B_t}(x_t) + \gamma_t\nabla h(x_t + \zeta_t^{\operatorname{R}}) - \gamma_t \nabla h(x_t) + s_t^\prime}{\zeta - \zeta_t^{\operatorname{R}}} \ \ge \ 0, \ \forall \zeta \in \T_{x_t}\M. \label{subeq.optimal conidtion zeta}
      \end{align}
  \end{subequations}
    By summing over the above two inequalities with $v = \zeta_t^{\operatorname{R}}$ in equation \eqref{subeq.optimal conidtion v} and $\zeta = v_t^{\operatorname{R}}$ in equation \eqref{subeq.optimal conidtion zeta}, we obtain
    \begin{align*}
        \inner{\nabla f(x_t) - \nabla f_{\B_t}(x_t)}{\zeta_t^{\operatorname{R}} - v_t^{\operatorname{R}}} \ \ge \ \gamma_t\inner{\nabla h(x_t + \zeta_t^{\operatorname{R}}) - \nabla h(x_t + v_t^{\operatorname{R}})}{\zeta_t^{\operatorname{R}} - v_t^{\operatorname{R}}} + \inner{s_t^\prime - s_t}{\zeta_t^{\operatorname{R}} - v_t^{\operatorname{R}}}.
    \end{align*}
   Due to the strong convexity of the reference function $h$, we have $\inner{\nabla h(x_t + \zeta_t^{\operatorname{R}}) - \nabla h(x_t + v_t^{\operatorname{R}})}{\zeta_t^{\operatorname{R}} - v_t} \ge \lambda \|\zeta_t^{\operatorname{R}} - v_t^{\operatorname{R}}\|^2$. Also, $\inner{s_t^\prime - s_t}{\zeta_t^{\operatorname{R}} - v_t^{\operatorname{R}}} \ge 0$ since $g$ is convex. We conclude $\|\nabla f(x_t) - \nabla f_{\B_t}(x_t)\| \cdot \|\zeta_t^{\operatorname{R}} - v_t^{\operatorname{R}}\| \ge \gamma_t \lambda \|\zeta_t^{\operatorname{R}} - v_t^{\operatorname{R}}\|^2$. Substituting the above inequality into equation \eqref{eq.bound G} yields
    \begin{align*}
        \|v_t^{\operatorname{R}}\|^2 \ \le \ 2\|\zeta_t^{\operatorname{R}}\|^2 + 2\frac{\|\nabla f(x_t) - \nabla f_{\B_t}(x_t)\|^2}{\gamma_t^2\lambda^2}.
    \end{align*}
    By the batching property, it follows $\E \left[\|\nabla f(x_t) - \nabla f_{\B_t}(x_t)\|^2 \mid x_t\right] \le \sigma^2 /|\B_t|$, where the expectation is taken over the mini-batch samples $\pi_{t}^{(1)}, \ldots, \pi_{t}^{(\B_t)}$. As a consequence, we obtain
    \begin{align*}
         \|v_t^{\operatorname{R}}\|^2 \ \le \ 2\E \left[\|\zeta_t^{\operatorname{R}}\|^2 \mid x_t \right] + \frac{2\sigma^2}{\gamma_t^2\lambda^2|\B_t|}.
    \end{align*}
    As for the second one, we can use a similar argument. By combining the optimality conditions of $\|v_t^\P\|$ and $\|\xi_t^\P\|$, we have $\|\grad f(x_t) - \grad f_{\B_t}(x_t)\| = \gamma_t \|\nabla h(x_t + v_t^\P) - \nabla h(x_t + \zeta_t^\P)\|$. Again, using the strong convexity of $h$ yields $\gamma_t \lambda \|v_t^\P - \zeta_t^\P\| \le \|\grad f(x_t) - \grad f_{\B_t}(x_t)\|$. Since $\grad f_{\B_t}(x_t) = \P_{\T_{x_t}\M}(\nabla f_{\B_t}(x_t))$, the batching property still holds, and we can conclude
    \begin{align*}
         \|v_t^{\P}\|^2 \ &\le \ 2\E \left[\|\zeta_t^{\P}\|^2 \mid x_t \right] + \frac{2\sigma^2}{\gamma_t^2\lambda^2|\B_t|}.
    \end{align*}
    The proof is completed.
\end{proof}

We now turn to establishing the sample complexity of the two stochastic methods. In the context of stochastic optimization, we assume that $\M$ is a compact Riemannian embedded submanifold. This assumption ensures that gradient-related quantities remain uniformly bounded throughout the analysis. Besides, we can also obtain uniform constants $M_1^{\operatorname{R}}, M_2^{\operatorname{R}} > 0$ in retraction inequalities \eqref{eq.retraction inequality} by \cite{boumal2019global}.
\begin{assumption}\label{assum.bounded iterate}
    The Riemannian embedded submanifold in problem \eqref{eq.main stochastic} is compact. Accordingly, we define $G_f \triangleq \max_{x \in \M}\|\nabla f(x)\|$, and $G_h \triangleq \max_{x \in \M}\|\nabla h(x)\|$.
\end{assumption}
\begin{lemma}\label{lemma.stochastic descent property}
    Suppose Assumptions \ref{assumption} and \ref{assum.bounded iterate} hold. For any stepsize $\gamma_t \ge L$ and $\alpha_t > 0$, the iterate $x_{t+1}$ generated by Algorithm \ref{alg.stochastic relative GD} satisfies
    \begin{align*}
       \E \left[F(x_{t+1}) - F(x_t) \mid x_t \right] \ \le \ -\left(\frac{\gamma_t \lambda\alpha_t}{4} -\left(G_f + 2\gamma_t G_h + L_g\right)M_2^{\operatorname{R}} \alpha_t^2\right)\|\zeta_t^{\operatorname{R}}\|^2 + \frac{\sigma^2 \alpha_t}{\gamma_t \lambda |\B_t|},
   \end{align*}
   where the expectation is taken with respect to $\pi_{t}^{(1)}, \ldots, \pi_{t}^{(\B_t)}$.
\end{lemma}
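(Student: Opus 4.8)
The plan is to run the deterministic descent argument of Lemma~\ref{lemma.descent property} with $\nabla f(x_t)$ replaced by the mini-batch gradient $\nabla f_{\B_t}(x_t)$ in the subproblem optimality condition, and to take the conditional expectation $\E[\,\cdot\mid x_t]$ only at the very end. Since $\M$ is compact (Assumption~\ref{assum.bounded iterate}), the retraction constants in \eqref{eq.retraction inequality} may be taken uniform, equal to $M_1^{\operatorname{R}},M_2^{\operatorname{R}}$, while $\|\nabla f(x_t)\|\le G_f$ and $\sup_{y\in\M}\|\nabla h(y)\|\le G_h$; consequently the two estimates of Lemma~\ref{lemma.bound bregman deviation} hold verbatim with $v_t$ replaced by $\zeta_t^{\operatorname{R}}$ and the local constants replaced by these uniform ones (the bound $D_h(x_t^+,x_t)\le\alpha_t D_h(x_t+\zeta_t^{\operatorname{R}},x_t)$, with $x_t^+\triangleq x_t+\alpha_t\zeta_t^{\operatorname{R}}$, being the usual convexity estimate, valid for $\alpha_t\le1$). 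Plugging relative $L$-smoothness of $f$ (and $\gamma_t\ge L$) into $x_{t+1}=\retr(x_t,\alpha_t\zeta_t^{\operatorname{R}})$, splitting $x_{t+1}-x_t=(x_{t+1}-x_t^+)+(x_t^+-x_t)$, and using \eqref{eq.retraction inequality} together with Lemma~\ref{lemma.bound bregman deviation}, I would obtain, exactly as in Lemma~\ref{lemma.descent property},
\[
f(x_{t+1})-f(x_t)\ \le\ \alpha_t\inner{\nabla f(x_t)}{\zeta_t^{\operatorname{R}}}+\gamma_t\alpha_t D_h(x_t+\zeta_t^{\operatorname{R}},x_t)+\bigl(G_f+2\gamma_t G_h\bigr)M_2^{\operatorname{R}}\|\alpha_t\zeta_t^{\operatorname{R}}\|^2.
\]

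Next I would write $\inner{\nabla f(x_t)}{\zeta_t^{\operatorname{R}}}=\inner{\nabla f_{\B_t}(x_t)}{\zeta_t^{\operatorname{R}}}+\inner{\nabla f(x_t)-\nabla f_{\B_t}(x_t)}{\zeta_t^{\operatorname{R}}}$. For the first, ``deterministic-looking'' piece, the first-order optimality condition of the tangent-space subproblem~\eqref{eq.stochastic retraction update} at $v=0\in\T_{x_t}\M$ furnishes $s_t'\in\partial g(x_t+\zeta_t^{\operatorname{R}})$ with $\inner{\nabla f_{\B_t}(x_t)-\gamma_t\nabla h(x_t)}{\zeta_t^{\operatorname{R}}}\le-\gamma_t\inner{\nabla h(x_t+\zeta_t^{\operatorname{R}})}{\zeta_t^{\operatorname{R}}}-\inner{s_t'}{\zeta_t^{\operatorname{R}}}$, and the same chain of equalities as in Lemma~\ref{lemma.descent property}, together with $\lambda$-strong convexity of $h$, yields $\alpha_t\inner{\nabla f_{\B_t}(x_t)}{\zeta_t^{\operatorname{R}}}+\gamma_t\alpha_t D_h(x_t+\zeta_t^{\operatorname{R}},x_t)\le-\tfrac{\alpha_t\gamma_t\lambda}{2}\|\zeta_t^{\operatorname{R}}\|^2-\alpha_t\inner{s_t'}{\zeta_t^{\operatorname{R}}}$. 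Handling the nonsmooth part exactly as in Lemma~\ref{lemma.descent property}, namely $g(x_{t+1})-g(x_t)\le L_gM_2^{\operatorname{R}}\|\alpha_t\zeta_t^{\operatorname{R}}\|^2+\alpha_t\inner{s_t'}{\zeta_t^{\operatorname{R}}}$ (by $L_g$-Lipschitzness and convexity of $g$), the $\inner{s_t'}{\zeta_t^{\operatorname{R}}}$ terms cancel on adding, and I arrive at
\[
F(x_{t+1})-F(x_t)\ \le\ \bigl(G_f+2\gamma_t G_h+L_g\bigr)M_2^{\operatorname{R}}\|\alpha_t\zeta_t^{\operatorname{R}}\|^2-\tfrac{\alpha_t\gamma_t\lambda}{2}\|\zeta_t^{\operatorname{R}}\|^2+\alpha_t\inner{\nabla f(x_t)-\nabla f_{\B_t}(x_t)}{\zeta_t^{\operatorname{R}}}.
\]

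The main obstacle is the last, noisy term: because $\zeta_t^{\operatorname{R}}$ is itself a function of the sampled gradient $\nabla f_{\B_t}(x_t)$, its conditional expectation does \emph{not} vanish, so one cannot treat it as mean-zero. I would instead control it pathwise by Cauchy--Schwarz followed by Young's inequality, calibrated so that the resulting quadratic term is absorbed by the available strong-convexity descent: $\alpha_t\inner{\nabla f(x_t)-\nabla f_{\B_t}(x_t)}{\zeta_t^{\operatorname{R}}}\le\tfrac{\alpha_t\gamma_t\lambda}{4}\|\zeta_t^{\operatorname{R}}\|^2+\tfrac{\alpha_t}{\gamma_t\lambda}\|\nabla f(x_t)-\nabla f_{\B_t}(x_t)\|^2$. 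The first summand eats exactly half of $-\tfrac{\alpha_t\gamma_t\lambda}{2}\|\zeta_t^{\operatorname{R}}\|^2$, leaving $-\tfrac{\alpha_t\gamma_t\lambda}{4}\|\zeta_t^{\operatorname{R}}\|^2$; for the second, unbiasedness and bounded variance give $\E[\|\nabla f(x_t)-\nabla f_{\B_t}(x_t)\|^2\mid x_t]\le\sigma^2/|\B_t|$, turning it into $\tfrac{\alpha_t\sigma^2}{\gamma_t\lambda|\B_t|}$. Taking $\E[\,\cdot\mid x_t]$ and collecting terms then yields precisely
\[
\E\bigl[F(x_{t+1})-F(x_t)\mid x_t\bigr]\ \le\ -\Bigl(\tfrac{\gamma_t\lambda\alpha_t}{4}-\bigl(G_f+2\gamma_t G_h+L_g\bigr)M_2^{\operatorname{R}}\alpha_t^2\Bigr)\|\zeta_t^{\operatorname{R}}\|^2+\frac{\sigma^2\alpha_t}{\gamma_t\lambda|\B_t|},
\]
as claimed (the right-hand $\|\zeta_t^{\operatorname{R}}\|^2$ being understood under the conditional expectation). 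Apart from this Young-splitting step and the compactness-based uniformization of the retraction and gradient constants, every inequality is the deterministic argument of Lemma~\ref{lemma.descent property} carried through unchanged.
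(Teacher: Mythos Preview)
Your proposal is correct and follows essentially the same approach as the paper's own proof: both rerun the deterministic descent argument with the stochastic gradient in the subproblem optimality condition, split $\inner{\nabla f(x_t)}{\zeta_t^{\operatorname{R}}}$ into the mini-batch part plus the noise, and handle the noise term via the same Young's inequality $\alpha_t\inner{\nabla f(x_t)-\nabla f_{\B_t}(x_t)}{\zeta_t^{\operatorname{R}}}\le\tfrac{\alpha_t\gamma_t\lambda}{4}\|\zeta_t^{\operatorname{R}}\|^2+\tfrac{\alpha_t}{\gamma_t\lambda}\|\nabla f(x_t)-\nabla f_{\B_t}(x_t)\|^2$ before taking conditional expectation. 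Your parenthetical remarks that the convexity step $D_h(x_t^+,x_t)\le\alpha_t D_h(x_t+\zeta_t^{\operatorname{R}},x_t)$ needs $\alpha_t\le1$ and that the $\|\zeta_t^{\operatorname{R}}\|^2$ on the right should be read under the conditional expectation are in fact more careful than the paper's own presentation.
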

\begin{proof}
    Following a similar argument in the proof of Lemma \ref{lemma.descent property} implies
   \begin{align*}
       f(x_{t+1}) - f(x_t) \ \le \ \alpha_t\left(\inner{\nabla f(x_t)}{\zeta_t^{\operatorname{R}}} + \gamma_t D_h(x_t + \zeta_t^{\operatorname{R}}, x_t)\right) + \left(G_f + 2\gamma_t G_h\right)M_2^{\operatorname{R}} \|\alpha_t \zeta_t^{\operatorname{R}}\|^2.
   \end{align*}
   It remains to upper the term 
   \begin{align*}
      &\alpha_t \left(\inner{\nabla f(x_t)}{\zeta_t^{\operatorname{R}}} + \gamma_t D_h(x_t + \zeta_t^{\operatorname{R}}, x_t) \right) \\
        = \ &\alpha_t \left(\inner{\nabla f_{\B_t}(x_t)}{\zeta_t^{\operatorname{R}}} + \gamma_t D_h(x_t + \zeta_t^{\operatorname{R}}, x_t) \right) + \alpha_t \inner{\nabla f(x_t) - \nabla f_{\B_t}(x_t)}{\zeta_t^{\operatorname{R}}}. 
   \end{align*}
   From the update of \eqref{eq.stochastic retraction update}, we have
   \begin{align*}
      \inner{\nabla f_{\B_t}(x_t) + \gamma_t \nabla h(x_t + \zeta_t^{\operatorname{R}}) - \gamma_t \nabla h(x_t) + s_t^\prime}{v - \zeta_t^{\operatorname{R}}} \ge 0, \ \forall \zeta \in \T_{x_t}\M,  
   \end{align*}
   where $s_t^\prime \in \partial g(x_t + \zeta_t^{\operatorname{R}})$. Specifically, choose \( \zeta \) to be the zero vector in \( \T_x \mathcal{M} \) and it yields $\inner{\nabla f_{\B_t}(x_t) - \gamma_t \nabla h(x_t)}{\zeta_t^{\operatorname{R}}} \le \inner{\gamma_t \nabla h(x_t + \zeta_t^{\operatorname{R}})}{-\zeta_t^{\operatorname{R}}} - \inner{s_t^\prime}{\zeta_t^{\operatorname{R}}}$. By using the strong convexity of $h$. we have
   \begin{align*}
      \alpha_t \left(\inner{\nabla f_{\B_t}(x_t)}{\zeta_t^{\operatorname{R}}} + \gamma_t D_h(x_t + \zeta_t^{\operatorname{R}}, x_t) \right) \ \le \ -\frac{\alpha_t \gamma_t \lambda}{2}\|\zeta_t^{\operatorname{R}}\|^2 - \alpha_t\inner{s_t^\prime}{\zeta_t^{\operatorname{R}}}.
   \end{align*}
   Besides, we can apply Young's inequality to obtain
   \begin{align*}
      \alpha_t \inner{\nabla f(x_t) - \nabla f_{\B_t}(x_t)}{\zeta_t^{\operatorname{R}}} \ \le \ \frac{\alpha_t}{\gamma_t \lambda}\|\nabla f(x_t) - \nabla f_{\B_t}(x_t)\|^2 + \frac{\gamma_t \lambda \alpha_t}{4}\|\zeta_t^{\operatorname{R}}\|^2.
   \end{align*}
   For the nonsmooth part $g$, it holds that
   \begin{align*}
       g(x_{t+1}) - g(x_t) \ \le \ L_g M_2^{\operatorname{R}}\|\alpha_t \zeta_t^{\operatorname{R}}\|^2 + \alpha_t \inner{s_t^\prime}{\zeta_t^{\operatorname{R}}}.
   \end{align*}
   Therefore, the descent property of $F$ can be shown as
   \begin{align*}
       F(x_{t+1}) - F(x_t) \ \le \ -\left(\frac{\gamma_t \lambda\alpha_t}{4} -\left(G_f + 2\gamma_t G_h + L_g\right)M_2^{\operatorname{R}} \alpha_t^2\right)\|\zeta_t^{\operatorname{R}}\|^2 + \frac{\alpha_t}{\gamma_t \lambda}\|\nabla f(x_t) - \nabla f_{\B_t}(x_t)\|^2.
   \end{align*}
   Finally, it remains to take the expectation
    \begin{align*}
       \E \left[F(x_{t+1}) - F(x_t) \mid x_t \right] \ \le \ -\left(\frac{\gamma_t \lambda\alpha_t}{4} -\left(G_f + 2\gamma_t G_h + L_g\right)M_2^{\operatorname{R}} \alpha_t^2\right)\|\zeta_t^{\operatorname{R}}\|^2 + \frac{\sigma^2 \alpha_t}{\gamma_t \lambda |\B_t|},
   \end{align*}
   and the proof is completed.
\end{proof}

\begin{theorem}\label{thm.sample complexity relative}
    Suppose Assumptions \ref{assumption} and \ref{assum.bounded iterate} hold, and let $T \ge 1$ be the total number of iterations. Under the following parameter setting:
    \begin{equation}\label{eq.relative parameter choice}
       \gamma_t = \gamma \ge L, \ \alpha_t = \alpha < \frac{\gamma \lambda}{8\left(G_f + 2\gamma G_h + L_g\right)M_2^{\operatorname{R}}}, \ |\B_t| = |\B|,  
    \end{equation}
   the sequence $\{x_t\}_{t = 0}^{T-1}$ generated by Algorithm \ref{alg.stochastic relative GD} satisfies 
   \begin{align*}
      \frac{1}{T}\sum_{t=0}^{T-1} \E \left[\|v_t^{\operatorname{R}}\|^2 \right] \ \le \ \mathcal{O}\left(\frac{F(x_0) - F^*}{T} + \frac{\sigma^2}{|\B|}\right),
   \end{align*}
   where the expectation is taken with respect to all the randomness. 
\end{theorem}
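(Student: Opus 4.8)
The plan is to follow the standard template for nonconvex stochastic first-order complexity bounds, combining the per-iteration stochastic descent estimate of Lemma~\ref{lemma.stochastic descent property} with the comparison between the true and stochastic update directions in Lemma~\ref{lemma.bound of G stochastic}. First I would exploit the stepsize restriction in \eqref{eq.relative parameter choice}: since $\alpha < \gamma\lambda/\bigl(8(G_f + 2\gamma G_h + L_g)M_2^{\operatorname{R}}\bigr)$, multiplying both sides by $\alpha>0$ gives $(G_f + 2\gamma G_h + L_g)M_2^{\operatorname{R}}\alpha^2 < \gamma\lambda\alpha/8$, so the coefficient of $\|\zeta_t^{\operatorname{R}}\|^2$ in Lemma~\ref{lemma.stochastic descent property} obeys $\frac{\gamma\lambda\alpha}{4} - (G_f + 2\gamma G_h + L_g)M_2^{\operatorname{R}}\alpha^2 > \frac{\gamma\lambda\alpha}{8}$. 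With $\gamma_t=\gamma$ and $|\B_t|=|\B|$ held fixed, Lemma~\ref{lemma.stochastic descent property} then yields $\E[F(x_{t+1}) - F(x_t)\mid x_t] \le -\frac{\gamma\lambda\alpha}{8}\|\zeta_t^{\operatorname{R}}\|^2 + \frac{\sigma^2\alpha}{\gamma\lambda|\B|}$.

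Next I would take total expectations (invoking the tower property over the natural filtration generated by the mini-batches $\pi_0^{(\cdot)},\dots,\pi_{t-1}^{(\cdot)}$), sum over $t=0,\dots,T-1$, and telescope the left-hand side to obtain $\frac{\gamma\lambda\alpha}{8}\sum_{t=0}^{T-1}\E[\|\zeta_t^{\operatorname{R}}\|^2] \le \E[F(x_0)] - \E[F(x_T)] + \frac{T\sigma^2\alpha}{\gamma\lambda|\B|} \le \bigl(F(x_0)-F^*\bigr) + \frac{T\sigma^2\alpha}{\gamma\lambda|\B|}$, where the last step uses that $x_0$ is deterministic and $F(x_T)\ge F^* = \inf_{x\in\M}F(x)$ since $x_T\in\M$ (finiteness of $F(x_0)-F^*$ follows from continuity of $F$ on the compact $\M$ under Assumption~\ref{assum.bounded iterate}). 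Dividing by $\frac{\gamma\lambda\alpha}{8}T$ bounds $\frac{1}{T}\sum_{t=0}^{T-1}\E[\|\zeta_t^{\operatorname{R}}\|^2]$ by $\frac{8(F(x_0)-F^*)}{\gamma\lambda\alpha T} + \frac{8\sigma^2}{\gamma^2\lambda^2|\B|}$. Finally, averaging the first inequality of Lemma~\ref{lemma.bound of G stochastic} over $t$ and taking total expectation gives $\frac{1}{T}\sum_{t=0}^{T-1}\E[\|v_t^{\operatorname{R}}\|^2] \le \frac{2}{T}\sum_{t=0}^{T-1}\E[\|\zeta_t^{\operatorname{R}}\|^2] + \frac{2\sigma^2}{\gamma^2\lambda^2|\B|}$; substituting the previous display and absorbing the fixed constants $\gamma,\lambda,\alpha$ into $\O(\cdot)$ yields the claimed $\O\bigl((F(x_0)-F^*)/T + \sigma^2/|\B|\bigr)$ bound.

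There is no substantial obstacle here, as the two preceding lemmas carry the analytic weight. The only point requiring genuine care is the measure-theoretic bookkeeping: Lemmas~\ref{lemma.stochastic descent property} and~\ref{lemma.bound of G stochastic} are stated conditionally on $x_t$, which is itself random for $t\ge1$, so one must apply the tower property with respect to the filtration before telescoping, and verify that $\|\zeta_t^{\operatorname{R}}\|$ and $\|v_t^{\operatorname{R}}\|$ are measurable with respect to the appropriate $\sigma$-algebras. One should also keep $\gamma_t=\gamma$ and $|\B_t|=|\B|$ consistent throughout so that the two $\sigma^2/(\gamma^2\lambda^2|\B|)$ contributions combine without fuss.
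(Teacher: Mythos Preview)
The proposal is correct and follows essentially the same approach as the paper: use the stepsize restriction to lower-bound the coefficient in Lemma~\ref{lemma.stochastic descent property} by $\gamma\lambda\alpha/8$, telescope the resulting descent inequality, and then invoke Lemma~\ref{lemma.bound of G stochastic} to pass from $\|\zeta_t^{\operatorname{R}}\|^2$ to $\|v_t^{\operatorname{R}}\|^2$. The only cosmetic difference is that the paper applies Lemma~\ref{lemma.bound of G stochastic} per iteration before summing, whereas you sum first and then apply it; both orderings yield the same bound, and your explicit handling of the tower property is, if anything, cleaner than the paper's.
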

\begin{proof}
    From Lemma \ref{lemma.stochastic descent property}, rearranging terms gives
    \begin{align*}
        \left(\frac{\gamma_t \lambda\alpha_t}{4} -\left(G_f + 2\gamma_t G_h + L_g\right)M_2^{\operatorname{R}} \alpha_t^2\right)\|\zeta_t^{\operatorname{R}}\|^2 \ \le \ \E \left[F(x_{t}) - F(x_{t+1}) \mid x_t \right] + \frac{\sigma^2 \alpha_t^2}{2|\B_t|}
    \end{align*}
    for any $\gamma_t \ge L$ and $\alpha_t > 0$. Notice that the choice $\alpha_t \le \gamma_t \lambda / (8\left(G_f + 2\gamma_t G_h + L_g\right)M_2^{\operatorname{R}})$ guarantees that
    \begin{align*}
       \frac{\gamma_t \lambda\alpha_t}{4} -\left(G_f + 2\gamma_t G_h + L_g\right)M_2^{\operatorname{R}} \alpha_t^2 \ \ge \ \frac{\gamma_t \lambda\alpha_t}{8}.
    \end{align*}
    Consequently, we obtain
    \begin{align*}
        \|\zeta_t^{\operatorname{R}}\|^2 \le \frac{8\E \left[F(x_{t}) - F(x_{t+1}) \mid x_t \right]}{\gamma_t \lambda\alpha_t} + \frac{4\sigma^2 \alpha_t}{\gamma_t \lambda|\B_t|}.
    \end{align*}
    By combining with Lemma \ref{lemma.bound of G stochastic}, and substituting the parameter choice \eqref{eq.relative parameter choice}, it follows
    \begin{align*}
        \alpha_t \|v_t^{\operatorname{R}}\|^2 \ \le \ \frac{16\E \left[F(x_{t}) - F(x_{t+1}) \mid x_t \right]}{\gamma_t \lambda\alpha_t} + \frac{8\sigma^2 \alpha_t}{\gamma_t \lambda|\B_t|} + \frac{2\sigma^2}{\gamma_t^2\lambda^2|\B_t|}.
    \end{align*}
    summing over $t$ from $0$ to $T-1$ imply
    \begin{align*}
        \frac{1}{T}\sum_{t=0}^{T-1} \E \left[\|v_t^{\operatorname{R}}\|^2 \right] \  \le \ \frac{16(F(x_0) - F(x^*))}{\gamma \lambda\alpha T} + \frac{8\sigma^2 \alpha}{\gamma \lambda|\B|} + \frac{2\sigma^2}{\gamma^2\lambda^2|\B|}
    \end{align*}
    where the expectation is taken with respect to all the randomness. The proof is completed.
\end{proof}

Given an accuracy $\epsilon > 0$, choose a minibatch size $|\B| = \mathcal{O}(\epsilon^{-2})$. Then, after $T = \mathcal{O}(\epsilon^{-2})$ iterations, an $\epsilon$-approximate Riemannian stationary point can be found in expectation, and the overall sample complexity is $\mathcal{O}(\epsilon^{-4})$. Now we establish the sample complexity bound of the stochastic projection-based method (Algorithm \ref{alg.stochastic project GD}). As discussed in Remark~\ref{remark.compare with ding}, when $\M$ is compact, we can use the stronger Lemma \ref{lemma.ding} in place of Lemma \ref{lemma.projection inequality}, and hence no bound on the tangent component is required. Let $H_h \triangleq \max_{x \in \M} \|\nabla^2 h(x)\|$. Define $\Psi_1 \triangleq 2G_h(M_{2}^\P + M_{3}^\P) + H_h$ and $\Psi_2 \triangleq \left((M_{2}^\P + M_{3}^\P) + (M_{1}^\P)^2M_4^\P\right)G_f$, where $M_i^\P$ ($i = 1,2,3$) are the constants in Lemma \ref{lemma.ding}. Therefore, since $u_t = 0$ in Algorithm \ref{alg.stochastic project GD}, we can conclude the following descent property
\begin{equation}\label{eq.descent stochastic projection}
    f(x_{t+1}) - f(x_t) \ \le \
    \left(\gamma_t\Psi_1 + \Psi_2\right)\|\alpha_t \zeta_t^{\P}\|^2 + \alpha_t \inner{\grad f(x_t)}{\zeta_t^{\P}} + \alpha_t \gamma_t D_h(x_t + \zeta_t^{\P}, x_t) 
\end{equation}
by using a similar argument in the proof of Theorem~\ref{thm.convergence of projection based}.

\begin{theorem}\label{thm.sample complexity projection}
    Suppose Assumptions \ref{assumption}, \ref{assumption.correction u} and \ref{assum.bounded iterate} hold. Let $T \ge 1$ be the total number of iterations. Under the following parameter setting:
    \begin{equation}\label{eq.projection parameter choice}
       \gamma_t = \gamma \ge L, \ \alpha_t = \alpha \le  \frac{\gamma \lambda}{8(\gamma \Psi_1 + \Psi_2)}, \ |\B_t| = |\B|,  
    \end{equation}
  the sequence $\{x_t\}_{t \ge 0}$ generated by Algorithm \ref{alg.stochastic project GD} satisfies 
    \begin{align*}
        \frac{1}{T}\sum_{t=0}^{T-1} \E \left[\|v_t^{\operatorname{\P}}\|^2 \right] \ \le \ \mathcal{O}\left(\frac{F(x_0) - F^*}{T} + \frac{\sigma^2}{|\B|}\right).
    \end{align*}
  where the expectation is taken with respect to all the randomness. 
\end{theorem}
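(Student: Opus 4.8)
The plan is to follow the template of the proof of Theorem~\ref{thm.sample complexity relative}, but starting from the projection-based descent inequality \eqref{eq.descent stochastic projection}, which has already isolated the stochastic direction $\zeta_t^{\P}$ and (via Lemma~\ref{lemma.bound bregman deviation}) linearized the Bregman term to $\alpha_t\gamma_t D_h(x_t+\zeta_t^{\P},x_t)$. Since $g\equiv 0$ we have $F=f$, so \eqref{eq.descent stochastic projection} reads
\[
F(x_{t+1})-F(x_t)\ \le\ (\gamma_t\Psi_1+\Psi_2)\|\alpha_t\zeta_t^{\P}\|^2+\alpha_t\inner{\grad f(x_t)}{\zeta_t^{\P}}+\alpha_t\gamma_t D_h(x_t+\zeta_t^{\P},x_t).
\]

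First I would split the gradient inner product as $\inner{\grad f(x_t)}{\zeta_t^{\P}}=\inner{\grad f_{\B_t}(x_t)}{\zeta_t^{\P}}+\inner{\grad f(x_t)-\grad f_{\B_t}(x_t)}{\zeta_t^{\P}}$. The first-order optimality condition of the \emph{unconstrained} subproblem \eqref{eq.stochastic projection update} yields $\grad f_{\B_t}(x_t)=\gamma_t\bigl(\nabla h(x_t)-\nabla h(x_t+\zeta_t^{\P})\bigr)$; substituting this and expanding $D_h$ collapses $\alpha_t\inner{\grad f_{\B_t}(x_t)}{\zeta_t^{\P}}+\alpha_t\gamma_t D_h(x_t+\zeta_t^{\P},x_t)$ to $-\alpha_t\gamma_t D_h(x_t,x_t+\zeta_t^{\P})\le-\tfrac{\alpha_t\gamma_t\lambda}{2}\|\zeta_t^{\P}\|^2$ by $\lambda$-strong convexity of $h$. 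For the residual term I would apply Young's inequality, $\alpha_t\inner{\grad f(x_t)-\grad f_{\B_t}(x_t)}{\zeta_t^{\P}}\le\frac{\alpha_t}{\gamma_t\lambda}\|\grad f(x_t)-\grad f_{\B_t}(x_t)\|^2+\frac{\gamma_t\lambda\alpha_t}{4}\|\zeta_t^{\P}\|^2$, and use that $\grad f_{\B_t}(x_t)=\P_{\T_{x_t}\M}(\nabla f_{\B_t}(x_t))$ together with nonexpansiveness of the tangent projection to get $\E\bigl[\|\grad f(x_t)-\grad f_{\B_t}(x_t)\|^2\mid x_t\bigr]\le\E\bigl[\|\nabla f(x_t)-\nabla f_{\B_t}(x_t)\|^2\mid x_t\bigr]\le\sigma^2/|\B_t|$. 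Collecting everything and taking the conditional expectation gives
\[
\E\bigl[F(x_{t+1})-F(x_t)\mid x_t\bigr]\ \le\ -\Bigl(\tfrac{\gamma_t\lambda\alpha_t}{4}-(\gamma_t\Psi_1+\Psi_2)\alpha_t^2\Bigr)\E\bigl[\|\zeta_t^{\P}\|^2\mid x_t\bigr]+\tfrac{\sigma^2\alpha_t}{\gamma_t\lambda|\B_t|}.
\]

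Next, the choice \eqref{eq.projection parameter choice} makes the bracket at least $\gamma\lambda\alpha/8$, hence $\E[\|\zeta_t^{\P}\|^2\mid x_t]\le 8\E[F(x_t)-F(x_{t+1})\mid x_t]/(\gamma\lambda\alpha)+8\sigma^2/(\gamma^2\lambda^2|\B|)$. Plugging this into Lemma~\ref{lemma.bound of G stochastic} bounds $\|v_t^{\P}\|^2$ by $16\E[F(x_t)-F(x_{t+1})\mid x_t]/(\gamma\lambda\alpha)+\mathcal{O}(\sigma^2/|\B|)$. Taking total expectation, summing over $t=0,\dots,T-1$, telescoping $\sum_t\E[F(x_t)-F(x_{t+1})]\le F(x_0)-F^*$, and dividing by $T$ yields the stated bound $\frac1T\sum_{t=0}^{T-1}\E[\|v_t^{\P}\|^2]\le\mathcal{O}\bigl((F(x_0)-F^*)/T+\sigma^2/|\B|\bigr)$.

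The heavy lifting — establishing \eqref{eq.descent stochastic projection} itself — is already done in the text. The only points that need care are bookkeeping: invoking the compactness of $\M$ (Assumption~\ref{assum.bounded iterate}) so that $\Psi_1,\Psi_2$, which are built from $G_h$, $H_h$, $G_f$ and the projection constants $M_i^{\P}$ of the stronger estimate (Lemma~\ref{lemma.ding}, applicable since $\P_\M(\cdot)$ now stays on $\M$ and no tangent-component control is needed), are genuine uniform constants, so a single step size $\alpha$ works for all $t$; and verifying that projecting the mini-batch gradient onto $\T_{x_t}\M$ preserves unbiasedness-of-direction and the variance bound. I do not anticipate any substantive obstacle beyond these.
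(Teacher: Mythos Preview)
Your proposal is correct and follows essentially the same route as the paper's proof: both start from \eqref{eq.descent stochastic projection}, split $\grad f(x_t)$ into the mini-batch gradient plus the error, use the unconstrained optimality condition of \eqref{eq.stochastic projection update} and strong convexity of $h$ to collapse the Bregman/inner-product terms to $-\tfrac{\alpha_t\gamma_t\lambda}{2}\|\zeta_t^{\P}\|^2$, apply Young's inequality and the variance bound (via nonexpansiveness of $\P_{\T_{x_t}\M}$) to the residual, and then combine the resulting descent estimate with Lemma~\ref{lemma.bound of G stochastic} and telescoping. Your remarks on the bookkeeping needed to make $\Psi_1,\Psi_2$ uniform via Assumption~\ref{assum.bounded iterate} and Lemma~\ref{lemma.ding} are exactly the points the paper relies on just before stating \eqref{eq.descent stochastic projection}.
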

\begin{proof}
    Recall that $g \equiv 0$. From \eqref{eq.descent stochastic projection}, it holds that
   \begin{align*}
       F(x_{t+1}) - F(x_t) \ \le \
    \left(\gamma_t\Psi_1 + \Psi_2\right)\|\alpha_t \zeta_t^{\P}\|^2 + \alpha_t \inner{\grad f(x_t)}{\zeta_t^{\P}} + \alpha_t \gamma_t D_h(x_t + \zeta_t^{\P}, x_t).
   \end{align*} 
   Then, we decompose $\grad f(x_t) = \grad f_{\B_t}(x_t) + \grad f(x_t) - \grad f_{\B_t}(x_t)$. By using the optimality condition of the subproblem \ref{eq.stochastic projection update}, we obtain $\grad f_{\B_t}(x_t) = \gamma_t \nabla h(x_t) - \gamma_t \nabla h(x_t + \zeta_t^{\P})$, which further implies
   \begin{align*}
       &\alpha_t \inner{\grad f(x_t)}{\zeta_t^{\P}} + \alpha_t \gamma_t D_h(x_t + \zeta_t^{\P}, x_t) \\
       = \ &\alpha_t \inner{\gamma_t \nabla h(x_t) - \gamma_t \nabla h(x_t + \zeta_t^{\P})}{\zeta_t^{\P}} + \alpha_t \gamma_t D_h(x_t + \zeta_t^{\P}, x_t) + \alpha_t \inner{\grad f(x_t) - \grad f_{\B_t}(x_t)}{\zeta_t^{\P}} \\
       = \ &\alpha_t \gamma_t \left(h(x_t + \zeta_t^{\P}) - h(x_t) - \inner{\nabla h(x_t + \zeta_t^{\P})}{\zeta_t^{\P}}\right) + \alpha_t \inner{\grad f(x_t) - \grad f_{\B_t}(x_t)}{\zeta_t^{\P}} \\
       \le \ &-\frac{\alpha_t \gamma_t \lambda}{2}\|\zeta_t^\P\|^2 + \frac{\alpha_t}{\gamma_t \lambda}\|\grad f(x_t) - \grad f_{\B_t}(x_t)\|^2 + \frac{\gamma_t \lambda \alpha_t}{4}\|\zeta_t^{\P}\|^2 \\
       = \ &-\frac{\alpha_t \gamma_t \lambda}{4}\|\zeta_t^\P\|^2 + \frac{\alpha_t}{\gamma_t \lambda}\|\grad f(x_t) - \grad f_{\B_t}(x_t)\|^2.
   \end{align*}
   As a result, the descent property follows
   \begin{align*}
      \E \left[F(x_{t+1}) - F(x_t) \mid x_t \right] \ \le \ -\left(\frac{\gamma_t \lambda\alpha_t}{4} -\left(\gamma_t\Psi_1 + \Psi_2\right)\alpha_t^2\right)\|\zeta_t^{\P}\|^2 + \frac{\sigma^2 \alpha_t^2}{2|\B_t|}. 
   \end{align*}
   Notice that under the choice $\alpha_t \le \gamma \lambda / (8(\gamma \Psi_1 + \Psi_2))$, we have
    \begin{align*}
       \frac{\gamma_t \lambda\alpha_t}{4} -(\gamma \Psi_1 + \Psi_2) \alpha_t^2 \ \ge \ \frac{\gamma_t \lambda\alpha_t}{8}.
    \end{align*}
    Consequently, we obtain
    \begin{align*}
        \|\zeta_t^{\operatorname{\P}}\|^2 \le \frac{8\E \left[F(x_{t}) - F(x_{t+1}) \mid x_t \right]}{\gamma_t \lambda\alpha_t} + \frac{4\sigma^2 \alpha_t}{\gamma_t \lambda|\B_t|}.
    \end{align*}
    By combining with Lemma \ref{lemma.bound of G stochastic} and substituting the parameter choice \eqref{eq.relative parameter choice}, we can conclude
    \begin{align*}
          \frac{1}{T}\sum_{t=0}^{T-1} \E \left[\|v_t^{\operatorname{\P}}\|^2 \right] \ \le \ \mathcal{O}\left(\frac{F(x_0) - F^*}{T} + \frac{\sigma^2}{|\B|}\right).
       \end{align*}
       The proof is completed.
\end{proof}

\section{Numerical Experiments}\label{section.numerics}
In this section, we numerically test our Riemannian Bregman gradient methods on the nonlinear eigenvalue problem \eqref{eq.ne} and the low-rank quadratic sensing problem \eqref{eq.low-rank recover}. All numerical experiments reported here are performed on a platform equipped with an Apple M1 CPU and 8GB of RAM. We test Algorithm~\ref{alg.relative GD} (R-RBGD), Algorithm~\ref{alg.projection relative GD} without a corrective normal vector (P-RBGD), and Algorithm~\ref{alg.projection relative GD} with a corrective normal vector $u_t = -v_t^\N$ (P-BRGD-C). We compare our methods against the \texttt{steepestdescent} solver in Manopt~\citep{boumal2014manopt}, employing both the default linesearch (RSD) and adaptive linesearch (RSD-Ada) strategies. For RSD and RSD-Ada, all parameters are kept at their default values provided by the package. The initial points are randomly generated.

We begin by testing the low-rank quadratic sensing problem \eqref{eq.low-rank recover}. The data $\{(y_j,c_j)\}_{j=1}^N$ are randomly generated by MATLAB's \texttt{randn} function, and we set $N = 100$. By proposition \ref{prop.projection fixed rank}, the update directions generated by R-RBGD and P-RBGD coincide. Hence, we only test P-RBGD, as the associated subproblem is computationally simpler. The parameters in the backtracking linesearch are set to an initial stepsize of 0.1 and a contraction factor of 0.5. All algorithms are terminated when the norm of the Riemannian gradient is less than $10^{-4}$. We test all algorithms with various parameter combinations of $m$ and $r$. The results are illustrated in Figures \ref{fig:problem12_m500_ranks}, \ref{fig:problem12_m1000_ranks}, \ref{fig:problem12_m2000_ranks}, \ref{fig:problem12_m4000_ranks}. From these figures, we observe that our projection-based Riemannian Bregman gradient method outperforms RSD and RSD-Ada in most cases, requiring fewer iterations to achieve the specified accuracy.
\begin{figure}[H]
    \begin{center}
        \minipage{0.33\textwidth}
            \subfigure[$m = 500, r = 10$]{%
                \includegraphics[width=0.9\linewidth]{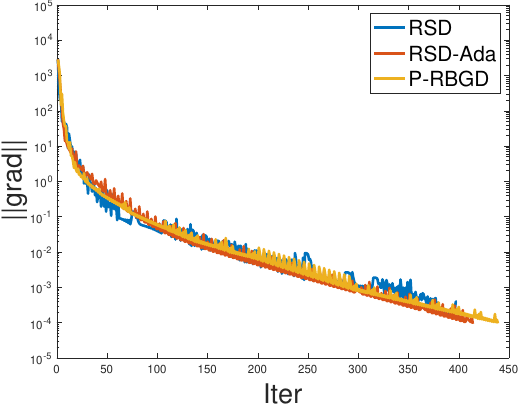}%
            }
        \endminipage\hfill
        \minipage{0.33\textwidth}
            \subfigure[$m = 500, r = 20$]{%
                \includegraphics[width=0.9\linewidth]{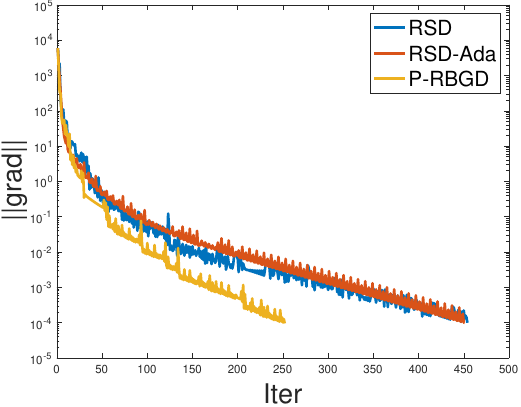}%
            }
        \endminipage\hfill
        \minipage{0.33\textwidth}
            \subfigure[$m = 500, r = 40$]{%
                \includegraphics[width=0.9\linewidth]{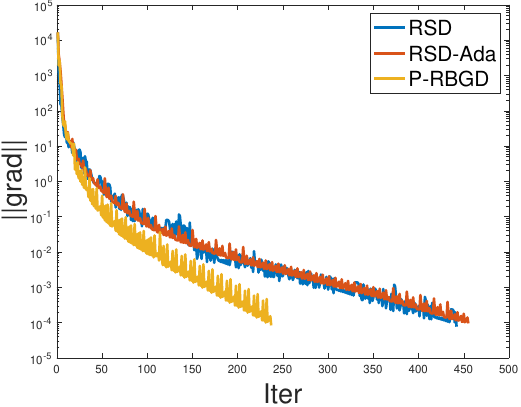}%
            }
        \endminipage
        \caption{The results of problem~\eqref{eq.low-rank recover} with $m=500$ and varying rank $r = 10, 20, 40$. }
        \label{fig:problem12_m500_ranks}
    \end{center}
\end{figure}

\begin{figure}[H]
    \begin{center}
        \minipage{0.33\textwidth}
            \subfigure[$m = 1000, r = 10$]{%
                \includegraphics[width=0.9\linewidth]{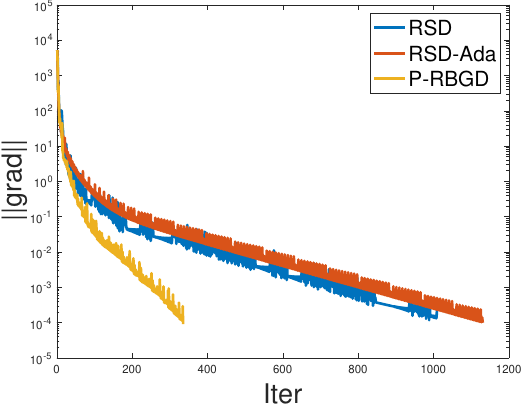}%
            }
        \endminipage\hfill
        \minipage{0.33\textwidth}
            \subfigure[$m = 1000, r = 20$]{%
                \includegraphics[width=0.9\linewidth]{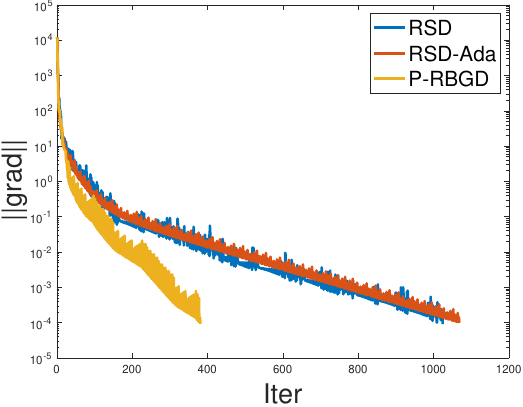}%
            }
        \endminipage\hfill
        \minipage{0.33\textwidth}
            \subfigure[$m = 1000, r = 40$]{%
                \includegraphics[width=0.9\linewidth]{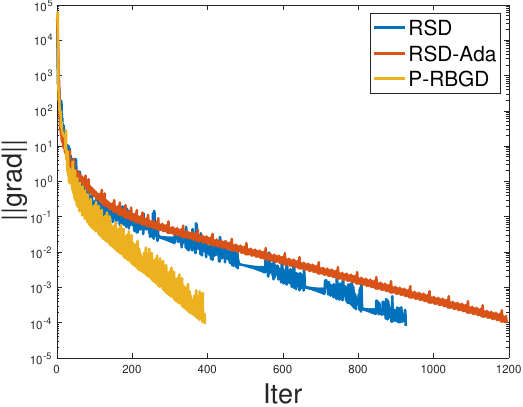}%
            }
        \endminipage
        \caption{The results of problem~\eqref{eq.low-rank recover} with $m=1000$ and varying rank $r = 10, 20, 40$.}
        \label{fig:problem12_m1000_ranks}
    \end{center}
\end{figure}

\begin{figure}[H]
    \begin{center}
        \minipage{0.33\textwidth}
            \subfigure[$m = 2000, r = 10$]{%
                \includegraphics[width=0.9\linewidth]{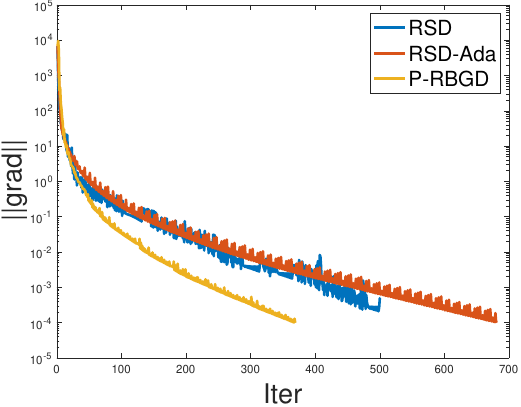}%
            }
        \endminipage\hfill
        \minipage{0.33\textwidth}
            \subfigure[$m = 2000, r = 20$]{%
                \includegraphics[width=0.9\linewidth]{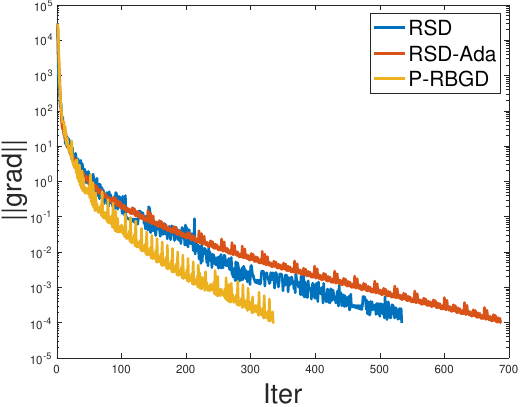}%
            }
        \endminipage\hfill
        \minipage{0.33\textwidth}
            \subfigure[$m = 2000, r = 40$]{%
                \includegraphics[width=0.9\linewidth]{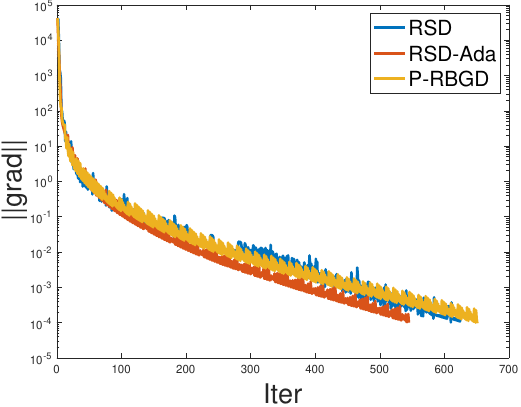}%
            }
        \endminipage
        \caption{The results of problem~\eqref{eq.low-rank recover} with $m=2000$ and varying rank $r = 10, 20, 40$.}
        \label{fig:problem12_m2000_ranks}
    \end{center}
\end{figure}

\begin{figure}[H]
    \begin{center}
        \minipage{0.33\textwidth}
            \subfigure[$m = 4000, r = 10$]{%
                \includegraphics[width=0.9\linewidth]{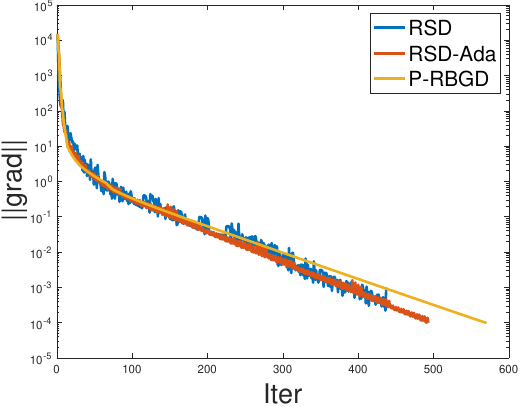}%
            }
        \endminipage\hfill
        \minipage{0.33\textwidth}
            \subfigure[$m = 4000, r = 20$]{%
                \includegraphics[width=0.9\linewidth]{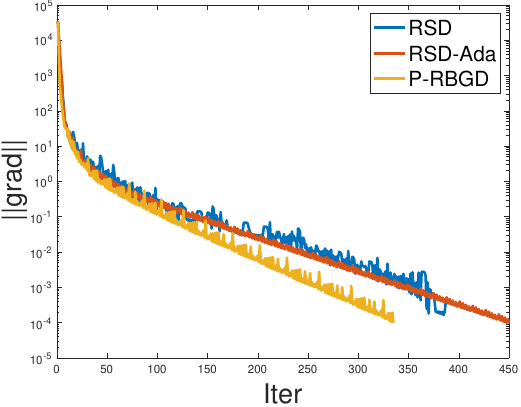}%
            }
        \endminipage\hfill
        \minipage{0.33\textwidth}
            \subfigure[$m = 4000, r = 40$]{%
                \includegraphics[width=0.9\linewidth]{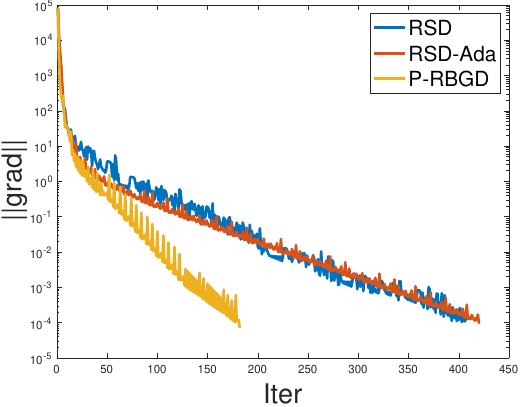}%
            }
        \endminipage
        \caption{The results of problem~\eqref{eq.low-rank recover} with $m=4000$ and varying rank $r = 10, 20, 40$.}
        \label{fig:problem12_m4000_ranks}
    \end{center}
\end{figure}

Next, we apply our methods to the nonlinear eigenvalue problem \eqref{eq.ne}. The parameter $\beta$ is set to $10$. For our three algorithms, we use a backtracking linesearch with an initial stepsize of 0.5 and a contraction factor of 0.5.  Similar to the previous experiment, all algorithms are terminated when the norm of the Riemannian gradient is smaller than $10^{-4}$. Tables~\ref{table.ne m} and~\ref{table.ne p} report the performance of all compared solvers on problem~\eqref{eq.ne} for various combinations of the parameters $m$ and $p$. From these tables, we observe that our Riemannian Bregman gradient methods achieve function values comparable to those obtained by RSD and RSD-Ada, yet require fewer iterations (the function values differ only in the 8th decimal place). Due to the need to solve an auxiliary subproblem at each iteration, the CPU time is similar to that of RSD and RSD-Ada. However, for challenging instances (e.g., $m = 5000, p = 60$), RSD and RSD-Ada fail to satisfy the termination criterion because the stepsize becomes excessively small, causing premature termination.

\begin{table}[htbp]
    \caption{The results of problem \eqref{eq.ne} with varying $m$.}
    \label{table.ne m}
    \centering
    \small
    \newcommand{\timebox}[1]{\makebox[3em][c]{#1}}  
    \newcommand{\iterbox}[1]{\makebox[2em][c]{#1}} 
\begin{tabular}{l|cccc|cccc}
    \toprule
    \multirow{2}{*}{Solver} & \multicolumn{4}{c|}{$m=500,\ p=50$} & \multicolumn{4}{c}{$m=1000,\ p=50$} \\
    \cmidrule(lr){2-5} \cmidrule(lr){6-9}
    & Fval & $\|\grad\|$ & Iter & Time & Fval & $\|\grad\|$ & Iter & Time \\
    \midrule
    RSD      & 2.7674e+04 & 5.1152e-05 & \iterbox{7566} & \timebox{36.2087} & 2.7674e+04 & 2.5639e-04 & \iterbox{8873} & \timebox{68.4400} \\
    RSD-Ada  & 2.7674e+04 & 4.3667e-04 & \iterbox{7650} & \timebox{40.2719} & 2.7674e+04 & 3.8527e-04 & \iterbox{8988} & \timebox{70.3007} \\
    R-RBGD   & 2.7674e+04 & 9.6010e-05 & \iterbox{4938} & \timebox{23.5305} & 2.7674e+04 & 9.8616e-05 & \iterbox{5518} & \timebox{53.8013} \\
    P-RBGD   & 2.7674e+04 & 9.9854e-05 & \iterbox{4864} & \timebox{40.7854} & 2.7674e+04 & 9.9820e-05 & \iterbox{5503} & \timebox{74.4051} \\
    P-RBGD-C & 2.7674e+04 & 9.7726e-05 & \iterbox{4901} & \timebox{46.7791} & 2.7674e+04 & 9.8943e-05 & \iterbox{5510} & \timebox{80.5897} \\
    \bottomrule
\end{tabular}

\begin{tabular}{l|cccc|cccc}
    \toprule
    \multirow{2}{*}{Solver} & \multicolumn{4}{c|}{$m=1500,\ p=50$} & \multicolumn{4}{c}{$m=2000,\ p=50$} \\
    \cmidrule(lr){2-5} \cmidrule(lr){6-9}
    & Fval & $\|\grad\|$ & Iter & Time & Fval & $\|\grad\|$ & Iter & Time \\
    \midrule
    RSD      & 2.7674e+04 & 9.1536e-04 & \iterbox{7108} & \timebox{117.2364} & 2.7674e+04 & 2.6418e-04 & \iterbox{8902} & \timebox{159.2698} \\
    RSD-Ada  & 2.7674e+04 & 4.7544e-04 & \iterbox{6304} & \timebox{113.7995} & 2.7674e+04 & 4.3806e-04 & \iterbox{9115} & \timebox{196.9259} \\
    R-RBGD   & 2.7674e+04 & 9.9155e-05 & \iterbox{5160} & \timebox{102.2899} & 2.7674e+04 & 9.5463e-05 & \iterbox{5964} & \timebox{192.0930} \\
    P-RBGD   & 2.7674e+04 & 9.9829e-05 & \iterbox{5144} & \timebox{130.3309} & 2.7674e+04 & 9.8332e-05 & \iterbox{5884} & \timebox{171.8766} \\
    P-RBGD-C & 2.7674e+04 & 9.8907e-05 & \iterbox{5322} & \timebox{133.2718} & 2.7674e+04 & 9.7909e-05 & \iterbox{6019} & \timebox{177.7956} \\
    \bottomrule
\end{tabular}

\begin{tabular}{l|cccc|cccc}
    \toprule
    \multirow{2}{*}{Solver} & \multicolumn{4}{c|}{$m=2500,\ p=50$} & \multicolumn{4}{c}{$m=3000,\ p=50$} \\
    \cmidrule(lr){2-5} \cmidrule(lr){6-9}
    & Fval & $\|\grad\|$ & Iter & Time & Fval & $\|\grad\|$ & Iter & Time \\
    \midrule
    RSD      & 2.7674e+04 & 2.3527e-04 & \iterbox{7840} & \timebox{206.9153} & 2.7674e+04 & 2.1955e-04 & \iterbox{7732} & \timebox{221.2304} \\
    RSD-Ada  & 2.7674e+04 & 3.5709e-04 & \iterbox{7238} & \timebox{232.8811} & 2.7674e+04 & 5.1092e-04 & \iterbox{6015} & \timebox{204.8033} \\
    R-RBGD   & 2.7674e+04 & 9.9000e-05 & \iterbox{5197} & \timebox{204.8740} & 2.7674e+04 & 9.8652e-05 & \iterbox{5315} & \timebox{239.3315} \\
    P-RBGD   & 2.7674e+04 & 9.8332e-05 & \iterbox{5246} & \timebox{179.6978} & 2.7674e+04 & 9.8969e-05 & \iterbox{5265} & \timebox{213.2356} \\
    P-RBGD-C & 2.7674e+04 & 9.7069e-05 & \iterbox{5303} & \timebox{207.1347} & 2.7674e+04 & 9.8572e-05 & \iterbox{5233} & \timebox{226.3744} \\
    \bottomrule
\end{tabular}
\end{table}

\begin{table}[htbp]
    \caption{The results of problem \eqref{eq.ne} with varying $p$.}
    \label{table.ne p}
    \centering
    \small
    \newcommand{\timebox}[1]{\makebox[3em][c]{#1}}  
    \newcommand{\iterbox}[1]{\makebox[2em][c]{#1}} 
    \begin{tabular}{l|cccc|cccc}
    \toprule
    \multirow{2}{*}{Solver} & \multicolumn{4}{c|}{$m=5000,\ p=10$} & \multicolumn{4}{c}{$m=5000,\ p=20$} \\
    \cmidrule(lr){2-5} \cmidrule(lr){6-9}
    & Fval & $\|\grad\|$ & Iter & Time & Fval & $\|\grad\|$ & Iter & Time \\
    \midrule
    RSD      & 2.8429e+02 & 8.1462e-05 & \iterbox{249}  & \timebox{1.0864} & 1.9443e+03 & 9.9204e-05 & \iterbox{1333} & \timebox{9.7795} \\
    RSD-Ada  & 2.8429e+02 & 9.8615e-05 & \iterbox{307}  & \timebox{1.2623} & 1.9443e+03 & 9.8288e-05 & \iterbox{1401} & \timebox{10.5765} \\
    R-RBGD   & 2.8429e+02 & 9.3002e-05 & \iterbox{317}  & \timebox{1.3235} & 1.9443e+03 & 9.7837e-05 & \iterbox{1282} & \timebox{12.6708} \\
    P-RBGD   & 2.8429e+02 & 9.3140e-05 & \iterbox{315}  & \timebox{1.1404} & 1.9443e+03 & 9.9858e-05 & \iterbox{1274} & \timebox{11.4878} \\
    P-RBGD-C & 2.8429e+02 & 9.5805e-05 & \iterbox{314}  & \timebox{1.3440} & 1.9443e+03 & 9.8893e-05 & \iterbox{1267} & \timebox{12.6460} \\
    \bottomrule
\end{tabular}

    \begin{tabular}{l|cccc|cccc}
    \toprule
    \multirow{2}{*}{Solver} & \multicolumn{4}{c|}{$m=5000,\ p=30$} & \multicolumn{4}{c}{$m=5000,\ p=40$} \\
    \cmidrule(lr){2-5} \cmidrule(lr){6-9}
    & Fval & $\|\grad\|$ & Iter & Time & Fval & $\|\grad\|$ & Iter & Time \\
    \midrule
    RSD      & 6.2293e+03 & 9.9957e-05 & \iterbox{3328} & \timebox{77.0571} & 1.4389e+04 & 9.5968e-05 & \iterbox{5728} & \timebox{171.1722} \\
    RSD-Ada  & 6.2293e+03 & 9.9447e-05 & \iterbox{3228} & \timebox{84.4482} & 1.4389e+04 & 3.1914e-04 & \iterbox{3872} & \timebox{126.8553} \\
    R-RBGD   & 6.2293e+03 & 9.9393e-05 & \iterbox{1681} & \timebox{43.5144} & 1.4389e+04 & 9.8178e-05 & \iterbox{5367} & \timebox{241.8630} \\
    P-RBGD   & 6.2293e+03 & 9.9030e-05 & \iterbox{1679} & \timebox{38.0242} & 1.4389e+04 & 9.6651e-05 & \iterbox{5356} & \timebox{218.3186} \\
    P-RBGD-C & 6.2293e+03 & 9.9961e-05 & \iterbox{1665} & \timebox{40.7001} & 1.4389e+04 & 9.8343e-05 & \iterbox{5339} & \timebox{223.8977} \\
    \bottomrule
\end{tabular}

\begin{tabular}{l|cccc|cccc}
    \toprule
    \multirow{2}{*}{Solver} & \multicolumn{4}{c|}{$m=5000,\ p=50$} & \multicolumn{4}{c}{$m=5000,\ p=60$} \\
    \cmidrule(lr){2-5} \cmidrule(lr){6-9}
    & Fval & $\|\grad\|$ & Iter & Time & Fval & $\|\grad\|$ & Iter & Time \\
    \midrule
    RSD      & 2.7674e+04 & 2.7698e-04 & \iterbox{8007} & \timebox{277.4160} & 4.7334e+04 & 1.3049e-03 & \iterbox{10090} & \timebox{392.3834} \\
    RSD-Ada  & 2.7674e+04 & 4.3422e-04 & \iterbox{7477} & \timebox{300.1134} & 4.7334e+04 & 6.6243e-04 & \iterbox{10260} & \timebox{415.5020} \\
    R-RBGD   & 2.7674e+04 & 9.9805e-05 & \iterbox{5326} & \timebox{328.1556} & 4.7334e+04 & 9.9850e-05 & \iterbox{5128} & \timebox{353.4060} \\
    P-RBGD   & 2.7674e+04 & 9.9563e-05 & \iterbox{5228} & \timebox{303.5396} & 4.7334e+04 & 9.9838e-05 & \iterbox{5267} & \timebox{382.2308} \\
    P-RBGD-C & 2.7674e+04 & 9.7795e-05 & \iterbox{5423} & \timebox{337.2280} & 4.7334e+04 & 9.8905e-05 & \iterbox{5300} & \timebox{411.1444} \\
    \bottomrule
\end{tabular}
\end{table}

\section{Conclusions}
In this paper, we developed two Riemannian Bregman gradient methods for solving relatively smooth optimization problems over Riemannian embedded submanifolds. The retraction-based method handles nonsmooth optimization by solving a convex subproblem constrained to the tangent space at each iteration. We identified particular reference functions, such as the quartic, log-barrier, and entropy functions, for which the subproblem admits either a closed-form solution or significantly simplified one. The projection-based approach, suitable for smooth optimization, involves solving an unconstrained subproblem in the ambient Euclidean space followed by a projection onto the manifold. Both methods achieve an iteration complexity of $\mathcal{O}(1/\epsilon^2)$ for finding an $\epsilon$-approximate Riemannian stationary point. Additionally, for compact manifolds, we proposed stochastic variants with sample complexities of $\mathcal{O}(1/\epsilon^4)$. Numerical experiments demonstrated the effectiveness of proposed Riemannian Bregman gradient methods.

\clearpage
\section*{Appendix}
In the appendix, we first introduce several concepts from variational analysis that are used in the proof of Lemma~\ref{lemma.continuity of v*}. 
Then, we restate the well-known Berge's Maximum Theorem, which is utilized in the proof of Theorem~\ref{thm.convergence of retraction based}, and Lemma~5.10 from \cite{ding2024convergence}, which provides inequalities related to projections onto a compact submanifold.

\begin{definition}[Set-valued map]\label{def.set valued map}
Let $X$ and $Y$ be topological spaces. A set-valued map from $X$ to $Y$ is a mapping $S: X \rightrightarrows Y$ that assigns to each $x \in X$ a subset $S(x) \subseteq Y$.
\end{definition}

\begin{definition}[Continuity of set-valued maps]\label{def.continuity of set valued map}
Let $X$ and $Y$ be metric spaces, and $S: X \rightrightarrows Y$ a set-valued map. We say $S$ is outer semicontinuous at $x \in X$ if, whenever $x_k \to x$ and $y_k \to y$ with $y_k \in S(x_k)$, we have $y \in S(x)$; $S$ is inner semicontinuous at $x \in X$ if, for any sequence $x_k \to x$ and any $y \in S(x)$, there exists a sequence $y_k \in S(x_k)$ with $y_k \to y$. $S$ is continuous at $x$ if it is both outer and inner semicontinuous at $x$.
\end{definition}

\begin{theorem}[Berge’s Maximum Theorem]\label{thm.Berge}
Let \(S \colon \mathbb{R}^n \rightrightarrows \mathbb{R}^n\) be a non-empty, compact-valued, continuous set-valued map, and let \(\varphi \colon \mathbb{R}^n \times \mathbb{R}^n \to \mathbb{R}\) be continuous. Then the maximum value function
\[
   \Phi(x) \triangleq \max_{v \in S(x)} \varphi(x,v), \qquad x \in \mathbb{R}^n
\]
is well-defined and continuous.
\end{theorem}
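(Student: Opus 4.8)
The plan is to prove well-definedness and then the two one-sided continuity properties of $\Phi$ separately. Well-definedness is immediate from the Weierstrass extreme value theorem: for each fixed $x$, the section $S(x)$ is non-empty and compact and $v\mapsto\varphi(x,v)$ is continuous on it, so the supremum defining $\Phi(x)$ is attained; in particular $\Phi(x)\in\mathbb{R}$ and the maximizer set $M(x)\triangleq\{v\in S(x):\varphi(x,v)=\Phi(x)\}$ is non-empty.

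For lower semicontinuity at a point $x$, I would take an arbitrary sequence $x_k\to x$, pick a maximizer $v^*\in M(x)$, and use the inner semicontinuity of $S$ at $x$ (Definition~\ref{def.continuity of set valued map}) to obtain $v_k\in S(x_k)$ with $v_k\to v^*$. Joint continuity of $\varphi$ gives $\varphi(x_k,v_k)\to\varphi(x,v^*)=\Phi(x)$, and since $\Phi(x_k)\ge\varphi(x_k,v_k)$ by definition of the maximum, taking the limit inferior yields $\liminf_{k\to\infty}\Phi(x_k)\ge\Phi(x)$.

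For upper semicontinuity at $x$, I would again fix a sequence $x_k\to x$, this time choosing maximizers $v_k\in M(x_k)$, and pass to a subsequence along which $\Phi(x_k)$ converges to $\limsup_{k\to\infty}\Phi(x_k)$. The crucial step is to extract from $\{v_k\}$ a convergent subsequence $v_k\to v'$; once that is done, outer semicontinuity of $S$ forces $v'\in S(x)$, and joint continuity of $\varphi$ gives $\limsup_{k\to\infty}\Phi(x_k)=\varphi(x,v')\le\Phi(x)$. Combining this with the lower bound shows $\Phi$ is continuous at every $x$.

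The one real obstacle is precisely the extraction of a convergent subsequence of maximizers in the upper-semicontinuity step, which requires $S$ to be locally bounded near $x$; without some boundedness hypothesis $\Phi$ can in fact fail to be continuous, as already seen for $S(x)=\{0,1/x\}$ on $\mathbb{R}\setminus\{0\}$, $S(0)=\{0\}$, with $\varphi(x,v)=v$. The theorem is therefore to be read with $S$ understood as a correspondence carrying compact sets into bounded sets — the standard formulation of Berge's theorem — which is automatic in all of the paper's applications, where $S$ is the ball-valued map $x\mapsto\overline{\mathbb{B}}(x,\|v^*(x)\|)$ with $v^*$ continuous. Granting this, $v_k$ eventually lies in a fixed bounded set (since $x_k$ is near $x$), so the Bolzano--Weierstrass theorem supplies the needed subsequence and the argument closes.
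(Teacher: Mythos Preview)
The paper does not provide a proof of Theorem~\ref{thm.Berge}: it is stated in the Appendix as a well-known result (``we restate the well-known Berge's Maximum Theorem'') and used as a black box in the proof of Theorem~\ref{thm.convergence of retraction based}. There is therefore nothing to compare against; your proposal supplies a proof where the paper offers none.

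Your argument is the standard one and is essentially correct. The observation about local boundedness is sharp: under the paper's Definition~\ref{def.continuity of set valued map} (outer/inner semicontinuity in the Rockafellar--Wets sense), outer semicontinuity of a compact-valued map does \emph{not} by itself imply local boundedness, and your counterexample $S(0)=\{0\}$, $S(x)=\{0,1/x\}$ for $x\neq 0$ with $\varphi(x,v)=v$ is valid --- this $S$ is non-empty, compact-valued, and continuous in the sense of Definition~\ref{def.continuity of set valued map}, yet $\Phi$ blows up at $0$. The classical Berge formulation uses upper hemicontinuity in the topological sense, which for compact-valued maps is exactly outer semicontinuity \emph{plus} local boundedness; the paper's restatement silently conflates the two notions. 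As you note, this is harmless for the paper's purposes: every invocation is with $S(x)=\overline{\mathbb{B}}(x,r(x))$ for a continuous radius $r$, which is manifestly locally bounded. With that hypothesis added, your upper-semicontinuity step (subsequence via Bolzano--Weierstrass, limit in $S(x)$ by outer semicontinuity, conclusion by joint continuity of $\varphi$) closes cleanly.
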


\begin{lemma}[Restatement of Lemma 5.10 in \cite{ding2024convergence}]\label{lemma.ding}
Let $\mathcal{M} \subseteq \mathbb{R}^n$ be a compact submanifold of class $C^3$. Then there exists a constant $\varrho > 0$ such that for all $x \in \mathcal{M}, v \in \T_{x} \mathcal{M}$ and $u \in \N_{x} \mathcal{M}$ satisfying $\|u\| \leq \varrho / 2$, we have
$$
\begin{aligned}
\left\|\mathcal{P}_{\mathcal{M}}(x+v+u)-x\right\| & \leq M_1^\P\|v\|, \\
\left\|\mathcal{P}_{\mathcal{M}}(x+v+u)-x-v\right\| & \leq M_2^\P\|v\|^2+M_3^\P\|v\|\|u\|,
\end{aligned}
$$
for some positive constants $M_{1}^\P, M_{2}^\P, M_{3}^\P > 0$.
\end{lemma}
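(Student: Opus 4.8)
Since the statement is a verbatim restatement of Lemma~5.10 in \cite{ding2024convergence}, the cleanest route is to cite that reference directly; but if one wants a self-contained argument, the plan is to mimic the proof of Lemma~\ref{lemma.projection inequality}, now using global compactness of $\M$. The first step is to upgrade Lemma~\ref{lemma.projection radius} to a uniform statement: the balls $\{\mathbb{B}(z,\varrho(z)/2)\}_{z\in\M}$ form an open cover of the compact set $\M$, so a finite subcover $\mathbb{B}(z_1,\varrho(z_1)/2),\dots,\mathbb{B}(z_m,\varrho(z_m)/2)$ exists; setting $\varrho \triangleq \min_{i}\varrho(z_i)/2>0$, the map $\mathcal{P}_{\M}$ is single-valued and $C^2$ on each $\mathbb{B}(z_i,\varrho(z_i))$, hence $\mathcal{P}_{\M}$ and $\mathrm{D}\mathcal{P}_{\M}$ are Lipschitz on each such ball with constants I take uniform over $i$, say $L_1$ and $L_2$; moreover $\mathrm{D}\mathcal{P}_{\M}(x)=\mathcal{P}_{\T_x\M}$ for $x\in\M$, and $\mathcal{P}_{\M}(x+u)=x$ whenever $x\in\M$, $u\in\N_x\M$, $\|u\|\le\varrho/2$. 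These constants will supply $M_1^\P,M_2^\P,M_3^\P$.

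For the first inequality I would fix $x\in\M$, $v\in\T_x\M$, $u\in\N_x\M$ with $\|u\|\le\varrho/2$ and split on the size of $\|v\|$. If $\|v\|\le\varrho/2$, then $x+u$ and $x+v+u$ lie in a common ball $\mathbb{B}(z_i,\varrho(z_i))$, and since $\mathcal{P}_{\M}(x+u)=x$ we get $\|\mathcal{P}_{\M}(x+v+u)-x\|=\|\mathcal{P}_{\M}(x+v+u)-\mathcal{P}_{\M}(x+u)\|\le L_1\|v\|$. If $\|v\|>\varrho/2$, any nearest point gives $\|\mathcal{P}_{\M}(x+v+u)-x\|\le\operatorname{diam}(\M)\le(2\operatorname{diam}(\M)/\varrho)\|v\|$. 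Taking $M_1^\P\triangleq\max\{L_1,\,2\operatorname{diam}(\M)/\varrho\}$ handles both cases.

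For the second inequality I would again deal with $\|v\|\le\varrho/2$ first, absorbing the regime $\|v\|>\varrho/2$ into $M_2^\P$ through the first inequality (there $\|\mathcal{P}_{\M}(x+v+u)-x-v\|\le(M_1^\P+1)\|v\|\le(2(M_1^\P+1)/\varrho)\|v\|^2$). In the main regime I would Taylor-expand $\mathcal{P}_{\M}$ to first order at $x+u$, which by the Lipschitz constant $L_2$ of $\mathrm{D}\mathcal{P}_{\M}$ gives $\|\mathcal{P}_{\M}(x+u+v)-\mathcal{P}_{\M}(x+u)-\mathrm{D}\mathcal{P}_{\M}(x+u)[v]\|\le(L_2/2)\|v\|^2$. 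Then substitute $\mathcal{P}_{\M}(x+u)=x$ and $\mathrm{D}\mathcal{P}_{\M}(x)[v]=\mathcal{P}_{\T_x\M}(v)=v$ (using $v\in\T_x\M$), and estimate $\|\mathrm{D}\mathcal{P}_{\M}(x+u)[v]-\mathrm{D}\mathcal{P}_{\M}(x)[v]\|\le L_2\|u\|\|v\|$ by Lipschitzness of $\mathrm{D}\mathcal{P}_{\M}$; the triangle inequality then yields $\|\mathcal{P}_{\M}(x+u+v)-x-v\|\le(L_2/2)\|v\|^2+L_2\|u\|\|v\|$, so $M_2^\P\triangleq L_2/2$ and $M_3^\P\triangleq L_2$ (each enlarged to cover the far regime) work.

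The only genuinely nontrivial step is the first one — promoting the pointwise radius and local Lipschitz bounds of Lemma~\ref{lemma.projection radius} to uniform ones via a finite-subcover (Lebesgue number) argument — and this is exactly the device already used in the proof of Lemma~\ref{lemma.projection inequality}; everything afterward is routine Taylor estimation. A point worth flagging is that, in contrast to Lemma~\ref{lemma.projection inequality}, no bound on the tangent vector $v$ is imposed in the hypotheses here: that is possible precisely because $\M$ is compact, so $\operatorname{diam}(\M)<\infty$ lets the large-$\|v\|$ regime be absorbed crudely into the constants.
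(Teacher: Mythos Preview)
Your proposal is correct: the paper does not prove this lemma at all but simply restates it from \cite{ding2024convergence}, exactly as you note in your first sentence. Your optional self-contained argument is a sound adaptation of the proof of Lemma~\ref{lemma.projection inequality} to the compact case, with the large-$\|v\|$ regime absorbed via $\operatorname{diam}(\M)$; this goes beyond what the paper provides but is entirely consistent with it.
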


\section*{Acknowledgment}
Chang He gratefully acknowledges Xiaotian Jiang at the University of Minnesota for fruitful discussions on projection properties, and Chuwen Zhang at the University of Chicago for helpful suggestions on numerical implementation.

\bibliographystyle{abbrvnat} 
\bibliography{reference}

\end{document}